\documentclass[11pt,a4paper]{article}

\usepackage[a4paper,margin=2.4cm]{geometry}
\usepackage[T1]{fontenc}
\usepackage[utf8]{inputenc}
\usepackage{lmodern}
\usepackage{microtype}
\usepackage{parskip}
\usepackage{amsmath,amssymb,amsthm,amsfonts}
\usepackage{mathtools}
\usepackage{bm}

\usepackage{graphicx}
\usepackage{float}            % [H] placement

\usepackage{underscore}        % allow _ in text safely (e.g., filenames)
\let\origincludegraphics\includegraphics
\renewcommand{\includegraphics}[2][width=0.6\linewidth]{%
  \IfFileExists{#2}%
    {\origincludegraphics[#1]{#2}}%
    {\IfFileExists{#2.pdf}%
       {\origincludegraphics[#1]{#2}}%
       {\IfFileExists{#2.png}%
          {\origincludegraphics[#1]{#2}}%
          {\IfFileExists{#2.jpg}%
             {\origincludegraphics[#1]{#2}}%
             {\IfFileExists{#2.jpeg}%
                {\origincludegraphics[#1]{#2}}%
                {\IfFileExists{#2.eps}%
                   {\origincludegraphics[#1]{#2}}%
                   {\fbox{\begin{minipage}[c][2.2cm][c]{0.85\linewidth}\centering\footnotesize
                      \textsf{[figure placeholder]}\\[2pt]\texttt{\small #2}
                      \end{minipage}}}}}}}}%
}
\usepackage{pifont}
\newcommand{\cmark}{\ding{51}}
\newcommand{\xmark}{\ding{55}}
\usepackage{caption}
\usepackage{subcaption}
\usepackage{array}
\usepackage{booktabs}
\usepackage{multirow}
\usepackage{makecell}

\usepackage[dvipsnames,table]{xcolor}

\definecolor{ddmGray}{HTML}{ECECEC}      % neutral background (svgraybox)
\definecolor{ddmGrayRule}{HTML}{8A8A8A}
\definecolor{ddmBlue}{HTML}{1F6FB2}      % important / key insight
\definecolor{ddmBlueBg}{HTML}{E8F1FA}
\definecolor{ddmAmber}{HTML}{C8881B}     % background information
\definecolor{ddmAmberBg}{HTML}{FCF3E0}
\definecolor{ddmGreen}{HTML}{2E8B57}     % trailer / algebraic insight
\definecolor{ddmGreenBg}{HTML}{E6F4EC}
\definecolor{ddmPurple}{HTML}{7A4FBF}    % tips / where Schwarz enters
\definecolor{ddmPurpleBg}{HTML}{F1EBFB}
\definecolor{ddmRed}{HTML}{B0413E}       % overview / summary
\definecolor{ddmRedBg}{HTML}{FBEDEC}
\definecolor{ddmCode}{HTML}{2E2E2E}

\usepackage{url}
\usepackage[colorlinks=true,
            linkcolor=ddmBlue,
            citecolor=ddmGreen,
            urlcolor=ddmPurple,
            bookmarksnumbered=true,
            breaklinks=true]{hyperref}
\usepackage[capitalise]{cleveref}

\theoremstyle{plain}
\newtheorem{theorem}{Theorem}[section]
\newtheorem{lemma}[theorem]{Lemma}
\newtheorem{proposition}[theorem]{Proposition}

\theoremstyle{definition}
\newtheorem{definition}[theorem]{Definition}

\theoremstyle{remark}
\newtheorem{remark}[theorem]{Remark}

\usepackage{algorithm}
\usepackage{algorithmic}

\usepackage{listings}

\lstdefinestyle{ddmcode}{
  basicstyle=\ttfamily\small,
  keywordstyle=\color{ddmBlue}\bfseries,
  commentstyle=\color{ddmGreen}\itshape,
  stringstyle=\color{ddmRed},
  numbers=left,
  numberstyle=\tiny\color{ddmGrayRule},
  numbersep=8pt,
  frame=single,
  rulecolor=\color{ddmGrayRule},
  backgroundcolor=\color{white},
  showstringspaces=false,
  breaklines=true,
  breakatwhitespace=true,
  columns=fullflexible,
  keepspaces=true,
  tabsize=2,
  xleftmargin=0pt,
  framexleftmargin=0pt,
  morekeywords={macro,mesh,square,include,varf,int2d,int3d,on,dx,dy,dz,
                fespace,solve,problem,real,complex,func,for,if,else,while,
                cout,endl,return},
  literate={->}{{$\rightarrow$}}1
}
\lstdefinelanguage{FreeFEM}[]{C++}{
  morekeywords={macro,mesh,square,include,varf,int2d,int3d,on,dx,dy,dz,
                fespace,solve,problem,real,complex,func}
}
\lstnewenvironment{ddmlisting}[1][]
  {\lstset{style=ddmcode,language=FreeFEM,#1}}
  {}
\lstnewenvironment{ddmshell}[1][]
  {\lstset{style=ddmcode,language=bash,numbers=none,
           keywordstyle=\color{ddmBlue}\bfseries,#1}}
  {}
\lstnewenvironment{ddmoutput}[1][]
  {\lstset{basicstyle=\ttfamily\footnotesize,
           numbers=none,frame=single,rulecolor=\color{ddmGrayRule},
           backgroundcolor=\color{ddmGray!40},
           showstringspaces=false,breaklines=false,columns=fullflexible,
           keepspaces=true,#1}}
  {}

\usepackage[most]{tcolorbox}
\tcbuselibrary{skins,breakable}

\newtcolorbox{svgraybox}{
  enhanced, breakable,
  colback=ddmGray!55, colframe=ddmGrayRule,
  boxrule=0.4pt, arc=2pt,
  left=8pt, right=8pt, top=6pt, bottom=6pt,
  before skip=8pt, after skip=8pt
}

\newtcolorbox{important}[1]{
  enhanced, breakable,
  colback=ddmBlueBg, colframe=ddmBlue,
  boxrule=0.6pt, arc=3pt,
  left=10pt, right=8pt, top=6pt, bottom=6pt,
  before skip=10pt, after skip=10pt,
  title={\textcolor{white}{\textbf{$\rhd$\;\,#1}}},
  coltitle=white,
  attach boxed title to top left={xshift=8pt, yshift*=-2pt},
  boxed title style={colback=ddmBlue, colframe=ddmBlue,
                     boxrule=0pt, arc=2pt, left=4pt, right=4pt, top=2pt, bottom=2pt}
}

\newtcolorbox{backgroundinformation}[1]{
  enhanced, breakable,
  colback=ddmAmberBg, colframe=ddmAmber,
  boxrule=0.6pt, arc=3pt,
  left=10pt, right=8pt, top=6pt, bottom=6pt,
  before skip=10pt, after skip=10pt,
  title={\textcolor{white}{\textbf{\textsf{#1}}}},
  coltitle=white,
  attach boxed title to top left={xshift=8pt, yshift*=-2pt},
  boxed title style={colback=ddmAmber, colframe=ddmAmber,
                     boxrule=0pt, arc=2pt, left=4pt, right=4pt, top=2pt, bottom=2pt}
}

\newtcolorbox{trailer}[1]{
  enhanced, breakable,
  colback=ddmGreenBg, colframe=ddmGreen,
  boxrule=0.6pt, arc=3pt,
  left=10pt, right=8pt, top=6pt, bottom=6pt,
  before skip=10pt, after skip=10pt,
  title={\textcolor{white}{\textbf{$\bigstar$\;\,#1}}},
  coltitle=white,
  attach boxed title to top left={xshift=8pt, yshift*=-2pt},
  boxed title style={colback=ddmGreen, colframe=ddmGreen,
                     boxrule=0pt, arc=2pt, left=4pt, right=4pt, top=2pt, bottom=2pt}
}

\newtcolorbox{tips}[1]{
  enhanced, breakable,
  colback=ddmPurpleBg, colframe=ddmPurple,
  boxrule=0.6pt, arc=3pt,
  left=10pt, right=8pt, top=6pt, bottom=6pt,
  before skip=10pt, after skip=10pt,
  title={\textcolor{white}{\textbf{\faLightbulbO\,\,#1}}},
  coltitle=white,
  attach boxed title to top left={xshift=8pt, yshift*=-2pt},
  boxed title style={colback=ddmPurple, colframe=ddmPurple,
                     boxrule=0pt, arc=2pt, left=4pt, right=4pt, top=2pt, bottom=2pt}
}

\newtcolorbox{overview}[1]{
  enhanced, breakable,
  colback=ddmRedBg, colframe=ddmRed,
  boxrule=0.6pt, arc=3pt,
  left=10pt, right=8pt, top=6pt, bottom=6pt,
  before skip=10pt, after skip=10pt,
  title={\textcolor{white}{\textbf{\textsf{#1}}}},
  coltitle=white,
  attach boxed title to top left={xshift=8pt, yshift*=-2pt},
  boxed title style={colback=ddmRed, colframe=ddmRed,
                     boxrule=0pt, arc=2pt, left=4pt, right=4pt, top=2pt, bottom=2pt}
}

\newcommand{\add}[1]{#1}

\newcommand{\cmd}[1]{\textcolor{ddmBlue}{\texttt{#1}}}
\newcommand{\cmdv}[1]{\textcolor{ddmRed}{\texttt{#1}}}
\newcommand{\cmdp}[1]{\textcolor{ddmGreen}{\texttt{#1}}}
\newcommand{\cmdpv}[1]{\textcolor{ddmPurple}{\texttt{#1}}}
\newcommand{\ff}[1]{\textcolor{ddmCode}{\texttt{#1}}}
\newcommand{\sh}[1]{\textcolor{ddmCode}{\texttt{#1}}}

\newcommand{\svhline}{\hline}

\usepackage{tikz}
\usetikzlibrary{arrows,arrows.meta,positioning,calc,decorations.pathreplacing,
                shapes,shapes.geometric,fit,backgrounds}

\providecommand{\faLightbulbO}{$\bullet$}

\usepackage{enumitem}
\setlist{leftmargin=1.4em,topsep=0.3em,itemsep=0.15em}

\allowdisplaybreaks
\title{\textbf{A Guided Tour of Modern Domain Decomposition}\\[0.35em]
\large From Schwarz Iterations to Robust Preconditioners\\ and HPC Implementations}

\author{Victorita Dolean \and Pierre Jolivet \and Frédéric Nataf \and Pierre-Henri Tournier}

\date{\today}

\begin{document}
\maketitle

\begin{center}
\begin{minipage}{0.78\textwidth}
\itshape\small
\hfill Divide each of the difficulties under examination into as many
\flushright parts as possible, and as might be necessary for its adequate solution.\\[0.8em]
\hfill \upshape\textsc{--- René Descartes}, \emph{Discourse of the Method}
\end{minipage}
\end{center}

\vspace{1em}

\begin{abstract}
Domain decomposition methods (DDMs) provide a unifying framework for the scalable numerical solution of partial differential equations. Originating from Schwarz’s alternating method, they have evolved into a rich family of algorithms that combine local robustness with global convergence acceleration and natural parallelism. Over the past decades, domain decomposition has played a central role in enabling large-scale simulations in numerous applications.
This chapter presents an overview of modern DDMs, with a particular emphasis on scalable preconditioning techniques for challenging problems, including indefinite and high-frequency regimes. We revisit the fundamental concepts -  overlapping decompositions, partition of unity, additive and restricted Schwarz formulations -  and explain their algebraic interpretations. We then clarify their role as preconditioners in Krylov subspace solvers and discuss the necessity of coarse space corrections for scalability.
Beyond a the survey aspect, the chapter distills key theoretical insights and practical design principles that have emerged over the past twenty years. Special attention is given to robust coarse spaces (GenEO, DtN-based approaches) and high-performance implementations. The goal is to provide both a coherent overview of the field and a concise, practice-oriented guide for readers seeking to understand and apply domain decomposition methods without navigating the entire literature.
\end{abstract}

\tableofcontents
\vspace{1em}
\hrule
\vspace{1em}

\section{Basic elements of domain decomposition}
\label{sec:intro_motivation_ddm}

Efficient and reliable solvers for partial differential equations (PDEs) are at the core of modern computational science and engineering. Applications such as medical imaging, electronics, seismic exploration, and aeronautics all require the numerical solution of PDE models at increasingly fine resolutions, leading to a higher physical fidelity. After discretization, these models typically produce very large sparse linear systems whose solution cost dominates the overall simulation or inversion pipeline. This chapter motivates domain decomposition methods (DDM) \cite{Dolean2015, ToselliWidlund2005} as a route to scalable algorithms: they are inherently parallel by splitting a global problem into many coupled local ones, while preserving global coherence through information exchange.

A representative setting arises in electromagnetics, where reconstructing the electric permittivity $\varepsilon$ from field measurements requires repeatedly solving Maxwell-type equations. In the frequency domain, one seeks the electric field $\mathbf{E}$ satisfying
\begin{equation}
\nabla \times (\mu^{-1}\nabla \times \mathbf{E}) - \omega^2 \varepsilon \mathbf{E} = \mathbf{J},
\end{equation}
where $\mu>0$ is the magnetic permeability, $\omega$ the angular frequency, $\varepsilon>0$ the electric permittivity and $\mathbf{J}$ a current source. In inverse problems, $\varepsilon$ is unknown and must be inferred from observations of $\mathbf{E}$, leading to repeated forward solutions in complex geometries and often in high-frequency regimes.

\begin{svgraybox}
The reconstruction of $\varepsilon$ from measurements of $\mathbf{E}$ is typically ill-posed and computationally demanding: small data errors can induce large parameter errors, and high-frequency discretizations yield very large systems that must be solved many times inside an iterative optimisation algorithm. A very performant forward solve is therefore necessary.
\end{svgraybox}

\begin{figure}[H]
    \centering
    \includegraphics[width=0.28\textwidth]{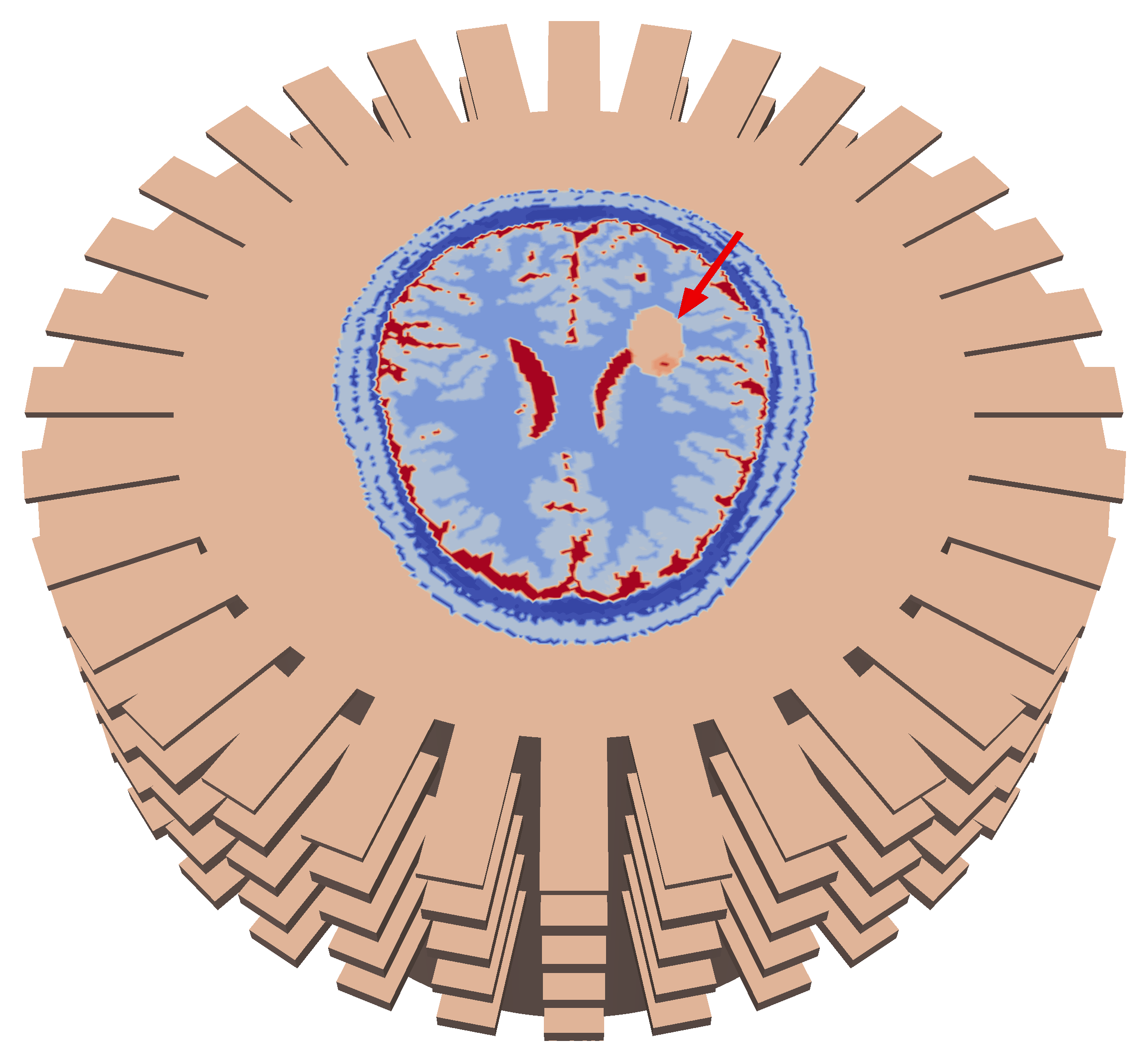}
    \includegraphics[width=0.28\textwidth]{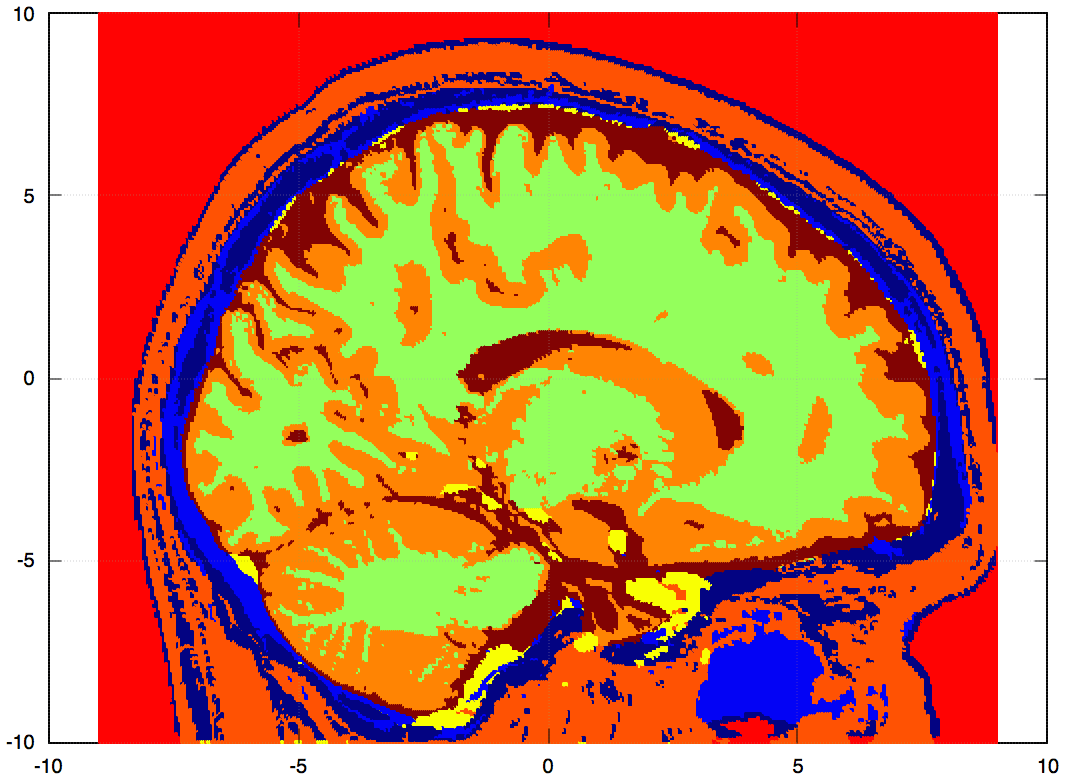}
    \includegraphics[width=0.35\textwidth]{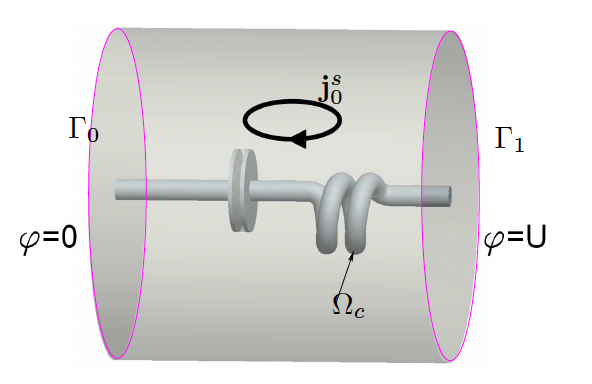}
    \caption{Examples of wave propagation in complex domains: brain imaging for stroke localization \cite{Tournier2017} (left, center), and current injection in an electric component (right).}
\end{figure}
In a nutshell, most PDE discretizations lead to linear systems of the form
\begin{equation}
A\mathbf{u}=\mathbf{b}.
\end{equation}
The solver choice determines not only runtime but also robustness across parameter regimes (heterogeneity, high contrast, high frequency) and across hardware platforms (multicore CPUs, GPUs, clusters). At a high level, one distinguishes among the following classes:
\begin{itemize}
\item \textit{Direct solvers}, which are robust but can become prohibitively expensive in memory and difficult to scale at very large sizes;
\item \textit{Iterative solvers}, which can be scalable and amenable to parallel computing but may converge slowly when $A$ is ill-conditioned or strongly indefinite;
\item \textit{Hybrid strategies}, where some algorithmic structure (e.g., multigrid or domain decomposition) is used to combine local robustness with global scalability.
\end{itemize}
Domain decomposition belongs to this third category: it leverages fast local solves (often direct) while orchestrating global convergence through carefully designed coupling.

Even for sparse matrices arising from PDE discretizations, factorization costs grow rapidly with dimension. Typical asymptotic complexities for sparse Gaussian elimination are summarized in Table~\ref{tab:direct_complexity}. The practical implication is that, while large 2D problems can often still be handled with sophisticated sparse direct methods, 3D problems quickly encounter fill-in and memory limitations. However, direct solvers remain invaluable as local solvers inside hybrid approaches, including DDM.

\begin{table}[!t]
\caption{Asymptotic complexity trends for Gaussian elimination under increasing structural exploitation (illustrative orders).}
\label{tab:direct_complexity}
\begin{tabular}{lccc}
\hline\noalign{\smallskip}
Method & 1D ($d=1$) & 2D ($d=2$) & 3D ($d=3$)\\
\noalign{\smallskip}\svhline\noalign{\smallskip}
Dense matrix & $\mathcal{O}(n^3)$ & $\mathcal{O}(n^3)$ & $\mathcal{O}(n^3)$\\
Band structure exploited & $\mathcal{O}(n)$ & $\mathcal{O}(n^2)$ & $\mathcal{O}(n^{7/3})$\\
Sparse (e.g., nested dissection) & $\mathcal{O}(n)$ & $\mathcal{O}(n^{3/2})$ & $\mathcal{O}(n^{2})$\\
\noalign{\smallskip}\hline\noalign{\smallskip}
\end{tabular}
\end{table}

Moreover, contemporary performance gains come primarily from parallelism, rather than increased clock speed. Most computational clusters now offer many cores, and high performant systems combine massive distributed-memory parallelism with accelerators. Algorithms must therefore provide (i) substantial concurrency, (ii) limited global synchronization, and (iii) locality of data movement.

\begin{figure}[h!]
    \centering
    \add{\includegraphics[width=0.9\textwidth,clip,trim=0.5cm 2cm 3.8cm 2.2cm]{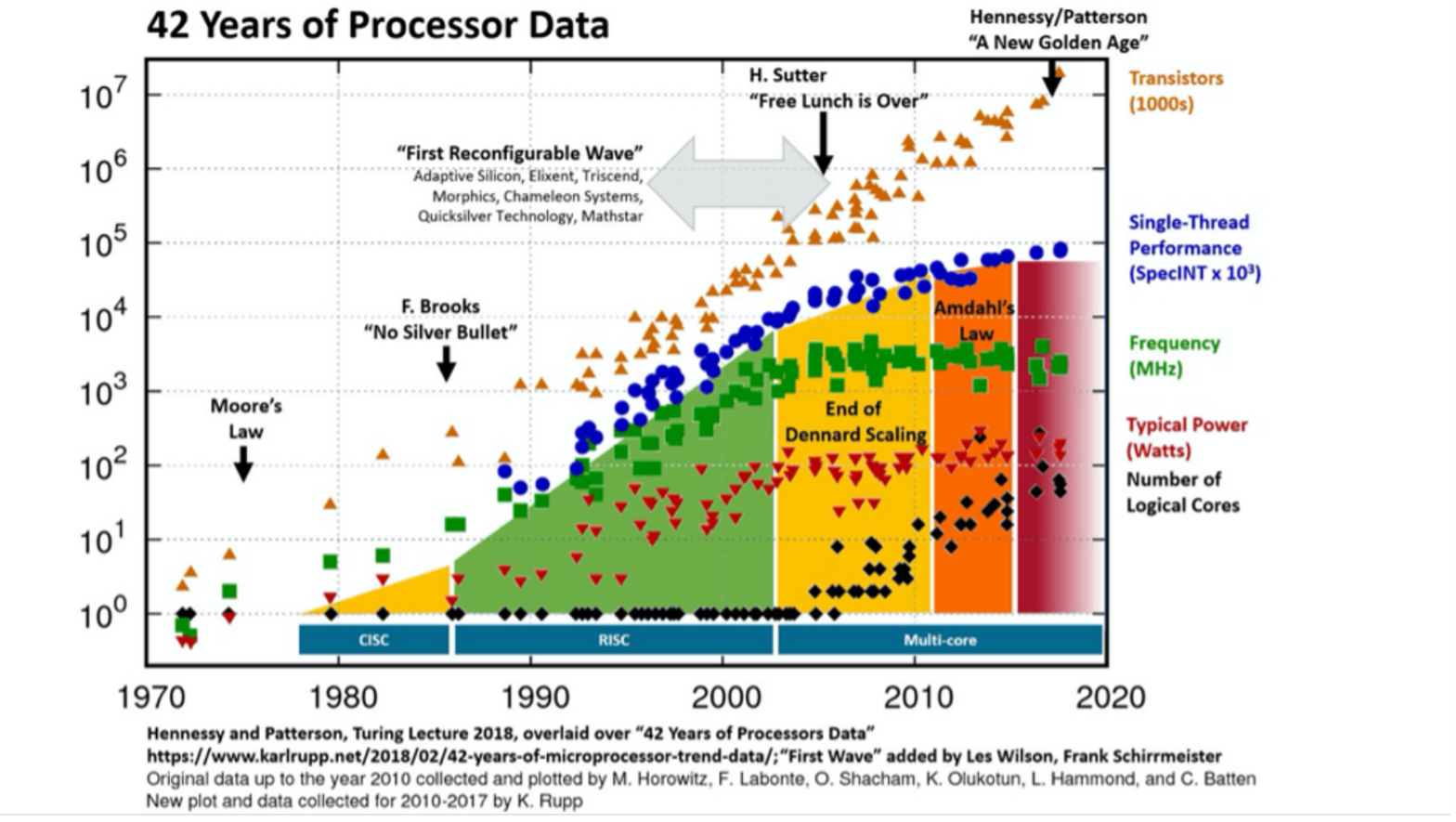}}
    \caption{\add{End of Dennard scaling — frequency gains stall.}}
\end{figure}

\begin{figure}[h!]
    \centering
    \add{\includegraphics[width=0.80\textwidth]{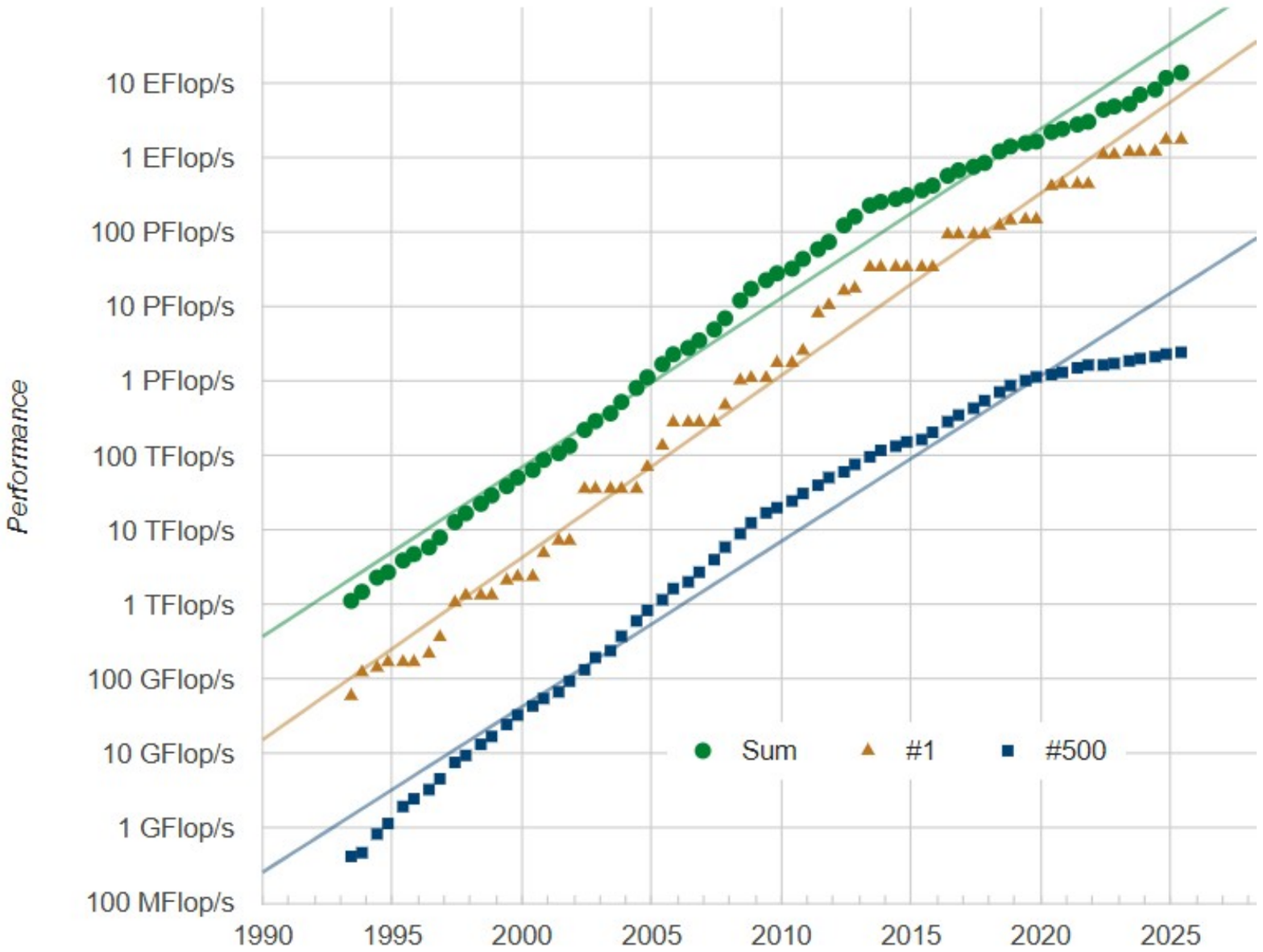}}
    \caption{\add{Performance trend of the Top500 supercomputers (sum,~\#1,~\#500) over the last three decades.}}
\end{figure}

\begin{svgraybox}
Domain decomposition methods align naturally with modern hardware: they replace a monolithic solve by many local solves that can run concurrently, while communicating only through interfaces. This makes DDM an effective abstraction layer between PDE discretizations and parallel architectures. \cite{Jolivet2012}
\end{svgraybox}

Therefore, from computational point of view domain decomposition provides a flexible and modular route to scalable PDE solvers:
\begin{itemize}
\item it exposes parallelism through independent subdomain solves;
\item it supports hybrid strategies that combine local robustness (often via direct solvers) with global convergence acceleration;
\item it matches current hardware trends by emphasizing concurrency and locality.
\end{itemize}
The remainder of this section develops the main domain decomposition concepts and their algebraic versions, preparing their use as standalone iterative methods and as preconditioners for Krylov solvers.

\subsection{The Schwarz Method: Historical Origin of Domain Decomposition}
\label{subsec:schwarz_origin}

Domain decomposition methods trace back to Schwarz' alternating method (1870), originally introduced as an overlapping strategy to solve elliptic boundary-value problems by iteratively coupling subproblems on subdomains. In its simplest form, the method solves
\begin{equation}
-\Delta u = f \quad \text{in } \Omega, \qquad u = 0 \quad \text{on } \partial\Omega,
\label{eq:poisson_model}
\end{equation}
by splitting $\Omega$ into overlapping subdomains $\Omega_1$ and $\Omega_2$ and updating local solutions sequentially.

\begin{backgroundinformation}{The original idea (Schwarz, 1870)}
The key mechanism is \emph{overlap}: information is transmitted between subdomains through Dirichlet data on the artificial interfaces. Without overlap, the classical alternating method loses its contraction property for elliptic problems.
\end{backgroundinformation}

Given iterates $u_1^{n}$ and $u_2^{n}$, compute $u_1^{n+1}$ and $u_2^{n+1}$ by solving
\begin{align}
-\Delta u_1^{n+1} &= f \quad \text{in } \Omega_1,
&
u_1^{n+1} &= 0 \quad \text{on } \partial\Omega_1 \cap \partial\Omega,
\nonumber\\
&&
u_1^{n+1} &= u_2^{n} \quad \text{on } \partial\Omega_1 \cap \overline{\Omega}_2,
\label{eq:schwarz_step1}
\\[0.2em]
-\Delta u_2^{n+1} &= f \quad \text{in } \Omega_2,
&
u_2^{n+1} &= 0 \quad \text{on } \partial\Omega_2 \cap \partial\Omega,
\nonumber\\
&&
u_2^{n+1} &= u_1^{n+1} \quad \text{on } \partial\Omega_2 \cap \overline{\Omega}_1.
\label{eq:schwarz_step2}
\end{align}
The second subproblem uses the most recent interface values from the first solve, which explains the sequential (Gauss--Seidel-like) nature of the classical scheme.

% Requires: \usepackage{tikz}

\begin{figure}[ht]
\centering
\begin{minipage}{0.55\textwidth}
  \centering
  \begin{tikzpicture}[scale=0.8]

    \def\r{2.0}
    \def\rectW{5.2}
    \def\rectH{2.6}
    \def\rectLeft{1.2}

    % Circle (Omega_1)
    \draw[line width=0.9pt] (0,0) circle (\r);
    \node at (-0.55,0) {$\Omega_1$};

    % Rectangle (Omega_2)
    \draw[line width=0.9pt]
      (\rectLeft,-0.5*\rectH) rectangle ++(\rectW,\rectH);
    \node at (\rectLeft+0.5*\rectW,0) {$\Omega_2$};

  \end{tikzpicture}
\end{minipage}\hfill
\begin{minipage}{0.4\textwidth}
  \caption{Two overlapping subdomains $\Omega_1$ and $\Omega_2$ for Schwarz alternating iteration.}
\end{minipage}
\end{figure}

\begin{important}{Key observations}
Schwarz' method already contains the main ingredients of a hybrid solver: local solves on subdomains, communication through interface conditions, and a global iteration built from these local updates. Overlap is essential for contraction in the classical elliptic setting. With vanishing overlap, the basic alternating scheme with Dirichlet transmission conditions stagnate.
\end{important}

A fully parallel variant is obtained by updating all subdomains simultaneously using interface data from the previous iterate. This Jacobi-type variant is often called the \emph{Jacobi--Schwarz} method.

%-----------------------------------------------------------------

\subsection{Abstract Additive and Restricted Additive Schwarz}
\label{subsec:asm_ras_abstract}

Modern domain decomposition methods generalize Schwarz' construction to many subdomains and to arbitrary discretizations. The unifying viewpoint is \emph{local-to-global}: compute local corrections, then assemble a global update by extension (prolongation) and weighting. In this part we follow the presentation from \cite{Dolean2015}.

\begin{definition}[Extension operators]
For each subdomain $\Omega_i$, the extension operator $E_i$ maps a local function $w_i:\Omega_i\to\mathbb{R}$ to a global function $E_i(w_i):\Omega\to\mathbb{R}$ by setting it to zero outside $\Omega_i$.
\end{definition}

\begin{definition}[Partition of unity]
Let $\{\chi_i\}$ be nonnegative weights such that $\chi_i=0$ on $\partial\Omega_i$ and
\begin{equation}
\add{\sum_i E_i(\chi_i)(x)=1} \quad \text{for all } x\in\Omega.
\label{eq:partition_unity_cont}
\end{equation}
Then any sufficiently regular global function $w$ can be reconstructed from its restrictions as
\begin{equation}
w(x)=\sum_i E_i\!\left(\chi_i\, w|_{\Omega_i}\right)(x).
\label{eq:reconstruction_cont}
\end{equation}
\end{definition}

Given a global iterate $u^n$, one solves local subproblems (typically for a correction or residual equation) and assembles the next iterate via
\begin{align}
\text{ASM:}\qquad
u^{n+1} &= \sum_i E_i\!\left(u_i^{n+1}\right),
\label{eq:asm_iter}
\\
\text{RAS:}\qquad
u^{n+1} &= \sum_i E_i\!\left(\chi_i\,u_i^{n+1}\right).
\label{eq:ras_iter}
\end{align}
Restricted Additive Schwarz (RAS) \cite{CaiSarkis1999} introduces partition-of-unity weights to limit redundant overlap updates and reduce communication, while preserving the same conceptual structure.

\begin{tips}{Practical rule of thumb}
ASM is often preferred to preserve symmetry, often useful in theory and analysis, whereas RAS is frequently faster in large-scale implementations because it limits redundancies and communication in the global assembly.
\end{tips}

%-----------------------------------------------------------------
\subsection{Algebraic view: from (block) Jacobi to AS/RAS/ORAS}
\label{subsec:block_jacobi_connection}

Schwarz methods become particularly transparent when written directly at the
linear-algebra level. Consider
\begin{equation}
A\mathbf{U}=\mathbf{F},\qquad
\mathbf{r}^n:=\mathbf{F}-A\mathbf{U}^n.
\label{eq:lin_system}
\end{equation}
The (point) Jacobi iteration reads
\begin{equation}
\mathbf{U}^{n+1}=\mathbf{U}^n + D^{-1}\mathbf{r}^n,
\label{eq:jacobi_point}
\end{equation}
where $D$ is the diagonal of $A$. Replacing scalar unknowns by \emph{subdomain
blocks} yields the standard block-Jacobi viewpoint underlying additive Schwarz
preconditioners.

\begin{backgroundinformation}{Restrictions, extensions, and subdomain operators}
Let $\{\Omega_i\}_{i=1}^N$ be an overlapping decomposition and let $R_i$ be the
Boolean restriction from global unknowns to the unknowns of subdomain $i$.
Define the local (subdomain) operator
\[
A_i := R_i A R_i^T,
\]
possibly modified at the subdomain boundary (e.g.\ Dirichlet for ASM, Robin for
ORAS). The (additive) extension of a local vector $\mathbf{v}_i$ back to the
global space is $R_i^T\mathbf{v}_i$.
\end{backgroundinformation}

%-----------------------------------------------------------------
\subsubsection*{Block-Jacobi as the prototype}

To make the correspondence concrete, split the degrees of freedom
$\mathcal{N}$ into two blocks $\mathcal{N}_1$ and $\mathcal{N}_2$ and write
\begin{equation}
A=
\begin{pmatrix}
A_{11} & A_{12}\\
A_{21} & A_{22}
\end{pmatrix},\qquad
\mathbf{U}=
\begin{pmatrix}
\mathbf{U}_1\\
\mathbf{U}_2
\end{pmatrix},\qquad
\mathbf{F}=
\begin{pmatrix}
\mathbf{F}_1\\
\mathbf{F}_2
\end{pmatrix}.
\label{eq:block_form}
\end{equation}
Block Jacobi computes $\mathbf{U}_1^{n+1}$ and $\mathbf{U}_2^{n+1}$ independently via
\begin{align}
A_{11}\mathbf{U}_1^{n+1} &= \mathbf{F}_1 - A_{12}\mathbf{U}_2^{n},
\label{eq:block_jacobi1}\\
A_{22}\mathbf{U}_2^{n+1} &= \mathbf{F}_2 - A_{21}\mathbf{U}_1^{n}.
\label{eq:block_jacobi2}
\end{align}
Equivalently, the global update can be written in additive form as
\begin{equation}
\mathbf{U}^{n+1}
=
\mathbf{U}^{n}
+
\left(R_1^T A_1^{-1}R_1 + R_2^T A_2^{-1}R_2\right)\mathbf{r}^n =: \mathbf{U}^{n}
+M^{-1}\mathbf{r}^n,
\label{eq:additive_two_block}
\end{equation}
which is the prototypical \emph{additive Schwarz} (ASM) update. We can see that \eqref{eq:additive_two_block} can also be seen as a preconditioned stationary iteration with the preconditioner $M^{-1}$.

%-----------------------------------------------------------------
\subsubsection*{Where RAS enters: restricting overlap contributions}

The distinction between ASM and RAS is purely algebraic:
\emph{ASM injects} each local correction on the full (overlapping) subdomain,
whereas \emph{RAS restricts} the injection either to a non-overlapping core or it sums a fraction of the local contributions of the corrections in a coherent manner.
This is implemented by inserting weights that form a discrete partition of unity.

\begin{definition}[ASM/RAS in a unified algebraic form]
Let $D_i$ be diagonal weight matrices such that
\( \sum_{i=1}^N R_i^T D_i R_i = I.\)
Then the preconditioned stationary iteration  can be written as
\begin{equation}
\mathbf{U}^{n+1}
=
\mathbf{U}^{n}
+ M^{-1}\mathbf{r}^n,\, 
M^{-1}
=
\sum_{i=1}^N R_i^T D_i\, B_i^{-1} R_i,
\label{eq:unified_one_level}
\end{equation}
where
\[
B_i =
\begin{cases}
A_i & \text{(ASM/RAS: Dirichlet-type local operator)},\\
\text{Robin-modified }A_i & \text{(ORAS: optimized local operator)}.
\end{cases}
\]
\end{definition}

\begin{svgraybox}
\textbf{Interpretation.}
\eqref{eq:unified_one_level} is a block-Jacobi like update: each subdomain solves a local problem for its share of the residual, and the
global correction is obtained by \emph{weighted assembly}.
\begin{itemize}
\item \textbf{ASM:} choose $D_i=I$ (full injection on the overlap).
\item \textbf{RAS:} choose $D_i$ that restricts (or downweights) overlap
contributions so that each global degree of freedom is corrected essentially
once.
\item \textbf{ORAS:} keep the same assembly idea but change the local operator
to a Robin/impedance form to better match wave propagation.
\end{itemize}
\end{svgraybox}

\begin{trailer}{Algebraic insight}
Equation~\eqref{eq:unified_one_level} is the prototypical additive Schwarz update: solve independent subproblems, prolong the corrections to the global space, and sum them. The only additional ingredient of RAS is the insertion of weights (a discrete partition of unity) to restrict overlap contributions.
\end{trailer}

%-----------------------------------------------------------------
To make the correspondence concrete, consider the 1D Poisson problem on $\Omega=(0,1)$ with homogeneous Dirichlet boundary conditions. A centered finite difference discretization on a uniform grid with $m$ interior points yields
\begin{equation}
A\mathbf{U}=\mathbf{F},\qquad
A=\frac{1}{h^2}\operatorname{tridiag}(-1,2,-1),
\qquad h=\frac{1}{m+1}.
\label{eq:1d_fd_system}
\end{equation}
We decompose the domain into overlapping subdomains
\begin{equation}
\Omega_1=(0,(m_s+1)h),
\qquad
\Omega_2=(m_s h,1),
\label{eq:1d_overlap_domains}
\end{equation}
so that the overlap consists of one mesh cell. On $\Omega_1$, the local finite difference solve with interface data from $\Omega_2$ reads
\begin{align}
-\frac{u_{1,j-1}^{n+1}-2u_{1,j}^{n+1}+u_{1,j+1}^{n+1}}{h^2} &= f_j,
\qquad 1\le j\le m_s,
\nonumber\\
u_{1,0}^{n+1} &= 0,
\qquad
u_{1,m_s+1}^{n+1} = u_{2,m_s+1}^{n}.
\label{eq:js_local_update}
\end{align}
An analogous update is performed on $\Omega_2$ using the appropriate interface value from $\Omega_1$ (either from the previous iterate in a fully parallel/Jacobi variant, or from the newest iterate in an alternating/Gauss--Seidel variant).

\begin{figure}[ht]
\centering
\resizebox{0.85\linewidth}{!}{%
\begin{tikzpicture}[
  x=\linewidth,     % 0..1 spans exactly the current line width
  y=1cm,
  line cap=round,
  line join=round
]

% -----------------------------
% Parameters (in [0,1])
% -----------------------------
\def\a{0.56}   % left end of overlap
\def\b{0.66}   % right end of overlap
\def\Ychi{2.2} % height of partition-of-unity plot
\def\Ydom{-0.9}% height of domain interval arrows

% Bounding box (prevents cropping surprises with thick lines)
\path[use as bounding box] (-0.02,-1.55) rectangle (1.02,2.75);

% -----------------------------
% 1D mesh line with nodes
% -----------------------------

% --- Draw nodes LAST so they are on top of thick strokes
\foreach \k in {0,...,12}{
  \coordinate (P\k) at ({\k/12},0);
  \fill[white] (P\k) circle (0.075);                % white halo
}

% Mark overlap endpoints (also draw last)
\draw[line width=0.9pt] (\a,0) -- (\a,0.14);
\draw[line width=0.9pt] (\b,0) -- (\b,0.14);
\node[below] at (\a,0) {$x_{m_s}$};
\node[below] at (\b,0) {$x_{m_s+1}$};

% Small overlap indicator arrow under the line
\draw[<->, line width=0.9pt] (\a,-0.45) -- (\b,-0.45);

% -----------------------------
% Partition of unity functions xi_1, xi_2 (continuous, X in [a,b])
% -----------------------------
\draw[line width=3.0pt]
  (0,\Ychi) -- (\a,\Ychi) -- (\b,0) -- (1,0);

\draw[line width=3.0pt]
  (0,0) -- (\a,0) -- (\b,\Ychi) -- (1,\Ychi);

\node at (0.18,1.65) {$\chi_1$};
\node at (0.82,1.65) {$\chi_2$};

% -----------------------------
% Overlapping subdomains Omega_1 and Omega_2 on the 1D line
% Omega_1 = [0,b], Omega_2 = [a,1]
% -----------------------------
\draw[<- , line width=1.1pt] (0,\Ydom) -- (\b,\Ydom);
\draw[-> , line width=1.1pt] (\a,\Ydom-0.15) -- (1,\Ydom-0.15);

\node at ({0.5*\b},\Ydom-0.45) {$\Omega_1$};
\node at ({(\a+1)/2},\Ydom-0.60) {$\Omega_2$};

% Emphasize overlap region on the axis (optional subtle highlight)
\draw[line width=5.0pt, opacity=0.12] (\a,0) -- (\b,0);

\end{tikzpicture}
}
\caption{Overlapping subdomains $\Omega_1=[0,b]$ and $\Omega_2=[a,1]$ with minimum overlap and a partition of unity $(\xi_1,\xi_2)$ for this decomposition.}
\end{figure}

\begin{important}{Key insight}
With the domain splitting \eqref{eq:1d_overlap_domains}, the local solves \eqref{eq:js_local_update} correspond precisely to the block-Jacobi update \eqref{eq:block_jacobi1}--\eqref{eq:block_jacobi2}. Hence for minimal overlap, Additive Schwarz (AS) and Restricted Additive Schwarz (RAS) coincide at the algebraic level: they all reduce to a block-Jacobi scheme. In this minimal overlap configuration, the distinctions between the different popular variants disappear.
\end{important}

%-----------------------------------------------------------------

%-----------------------------------------------------------------
\subsection{From continuous to discrete}
\label{subsec:discrete_setting_matrix}

Domain decomposition algorithms are implemented and analyzed most naturally in discrete, linear-algebraic terms. The goal of this section is to translate the continuous ingredients (restriction, extension, partition of unity) into sparse operators acting on vectors of degrees of freedom, and to show how these operators define local subproblems and global updates.

At the continuous level, a global field $u$ is restricted to subdomains, locally processed, and then extended back to $\Omega$. At the discrete level, the same logic is encoded by sparse restriction matrices, their transposes as prolongations, and diagonal weights implementing a partition of unity.

Let $\Omega=\bigcup_{i=1}^N \Omega_i$ be an overlapping decomposition and let $\mathcal{N}$ be the set of global degrees of freedom (DoFs). The key correspondence is summarized below.

\begin{table}[ht]
\centering
\renewcommand{\arraystretch}{1.4}
\begin{tabular}{p{0.28\textwidth} p{0.32\textwidth} p{0.32\textwidth}}
\toprule
\textbf{Concept}
& \textbf{Continuous level}
& \textbf{Discrete level} \\
\midrule
Domain / index set
& $\Omega=\bigcup_{i=1}^N \Omega_i$
& $\mathcal{N}=\bigcup_{i=1}^N \mathcal{N}_i$ \\

Unknown
& $u:\Omega\to\mathbb{R}$
& $\mathbf{U}\in\mathbb{R}^{\#\mathcal{N}}$ \\

Restriction
& $u_i=u|_{\Omega_i}$
& $R_i$ \\

Extension
& $E_i$
& $R_i^T$ \\

Partition of unity
& $\{\chi_i\}$,
$u=\sum_i E_i(\chi_i u_i)$
& $\{D_i\}$,
$\sum_i R_i^T D_i R_i = I$ \\
\bottomrule
\end{tabular}
\caption{Continuous–discrete correspondence in overlapping Schwarz methods.}
\end{table}

Restriction operators connect global and local vectors. Concretely, if $\mathbf{U}$ is the global vector of DoFs, then $\mathbf{U}_i := R_i\mathbf{U}$ collects the entries living in subdomain $i$, and $R_i^T$ injects a local vector back into the global space.
\begin{backgroundinformation}{Mesh-based viewpoint}
Let $\mathcal{T}_h$ be a global mesh on $\Omega$ and $\mathcal{T}_{h,i}$ the mesh induced on $\Omega_i$. In finite elements, the restriction map $r_i:V_h\to V_{h,i}$ (defined by $r_i(u_h)=u_h|_{\Omega_i}$) is represented algebraically by a Boolean matrix $R_i$. In finite differences, $R_i$ simply selects grid values on the subdomain stencil.
\end{backgroundinformation}
\noindent Given a global linear system $A\mathbf{U}=\mathbf{F}$, the local matrices and residuals are defined by
\begin{equation}
A_i := R_i A R_i^T,
\qquad
\mathbf{r}_i := R_i(\mathbf{F}-A\mathbf{U}),
\label{eq:local_matrix_residual}
\end{equation}
and local corrections $\mathbf{z}_i$ are obtained by solving \(
A_i \mathbf{z}_i = \mathbf{r}_i.\)

\begin{important}{Algebraic ingredients in DDM}
The triplet $(R_i,\,A_i,\,D_i)$ determines the method:
restriction/prolongation ($R_i$, $R_i^T$), the local operator ($A_i$), and the partition-of-unity weights ($D_i$) used to assemble a global update. When subdomains overlap, a global DoF may be present in several local vectors. The role of $D_i$ is to weight these contributions so that the global assembly does not overcount overlap.
\end{important}

%-----------------------------------------------------------------
\begin{definition}[Discrete partition of unity]
Let $R_i$ restrict to the (possibly extended) subdomain DoF set and let $D_i$ be diagonal matrices of matching size. The family $\{D_i\}_{i=1}^N$ is a discrete partition of unity if
\begin{equation}
\sum_{i=1}^N R_i^T D_i R_i = I.
\label{eq:discrete_partition_unity}
\end{equation}
\end{definition}

A common and robust choice is \emph{multiplicity scaling}: each global DoF is weighted by the inverse of the number of subdomains that contain it (see \eqref{eq:multiplicity_scaling} below).

\begin{tips}{Implementation tip}
When $R_i$ are Boolean selection matrices, the diagonal of $\sum_i R_i^T R_i$ stores the overlap multiplicity of each global DoF. Multiplicity scaling amounts to using its reciprocal as a global weight, then restricting it locally to build each $D_i$.

\add{Besides multiplicity scaling, a popular alternative is the \emph{Boolean} partition of unity used in PETSc, in which each global degree of freedom is assigned to a single subdomain (typically the one of lowest rank in $\mathcal{M}_j$) so that $D_i$ has only $0$/$1$ entries. This choice is immune to round-off errors and is in practice more robust than multiplicity scaling, at the price of a slightly less symmetric treatment of the overlap.}
\end{tips}

%-----------------------------------------------------------------
\subsection{Algebraic Schwarz formulation}
\label{subsec:algebraic_partitioning_overlap}

In large-scale applications, subdomains are commonly defined algebraically from the sparsity pattern of $A$. The matrix induces a graph $G=(V,E)$ where nodes represent DoFs and edges represent couplings.

\begin{backgroundinformation}{Graph-based partitioning}
Given a sparse matrix $A$, the associated (symmetrized) graph has an edge $(i,j)$ whenever $A_{ij}\neq 0$. Standard graph partitioners such as \texttt{METIS} \cite{KarypisKumar1998} or \texttt{SCOTCH} \cite{ChevalierPellegrini2008} compute balanced node partitions, thereby achieving the following goals:
\begin{itemize}
\item \textbf{Load balance:} each subdomain carries roughly the same number of unknowns (and local work).
\item \textbf{Reduced communication:} the interface size (edge cut) is small compared with the subdomain volume.
\end{itemize}
\end{backgroundinformation}

\begin{definition}[Overlapping subdomains (graph-based)]
Let $\{\mathcal{N}_i\}$ be a \add{partition of DoFs}. The one-layer overlap extension is
\begin{equation}
\mathcal{N}_i^{\delta=1}
=
\mathcal{N}_i \cup \{\, j \in \mathcal{N} \;:\; \exists\, k\in\mathcal{N}_i \text{ with } A_{kj}\neq 0 \,\}.
\label{eq:overlap_graph}
\end{equation}
More generally, $\delta$ layers are obtained by repeating the neighbor expansion $\delta$ times.
\end{definition}

\begin{figure}[t]
\centering
\begin{minipage}[c]{0.58\textwidth}
  \centering
  \includegraphics[width=\textwidth]{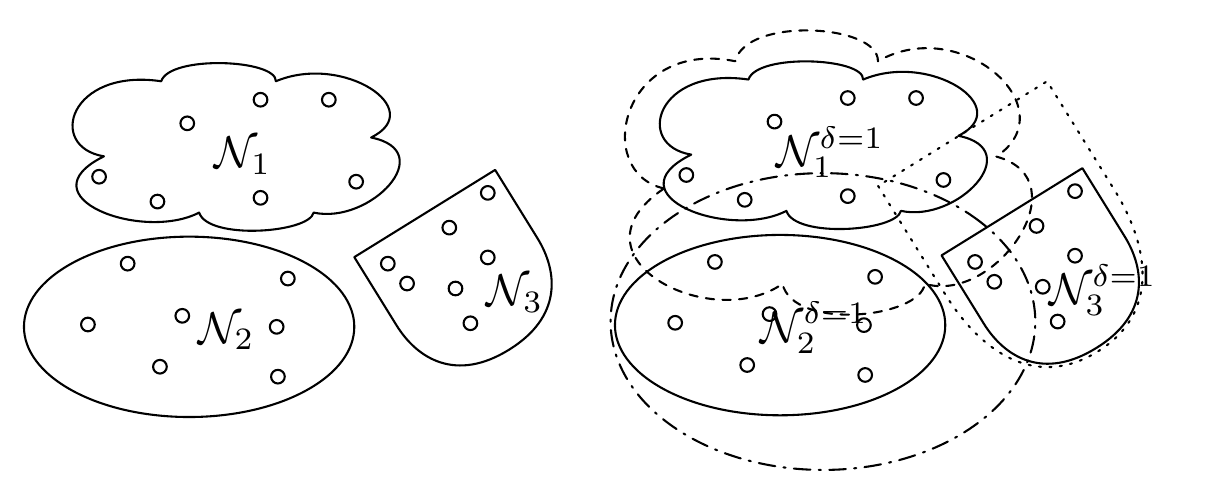}
\end{minipage}\hfill
\begin{minipage}[c]{0.38\textwidth}
  \caption{Extension of subdomains to overlapping neighborhoods in a stencil-based discretization.}
  \label{fig:overlap_extension}
\end{minipage}
\end{figure}

\begin{definition}[Multiplicity scaling and partition of unity]
\label{def:multiplicity_scaling}
Let $\{\mathcal{N}_i^{\delta}\}_{i=1}^N$ denote the (possibly overlapping)
index sets associated with the subdomains.
For a global degree of freedom $j$, define the multiplicity set
\[
\mathcal{M}_j := \{\, i \;:\; j\in\mathcal{N}_i^{\delta} \,\},
\]
and the associated multiplicity \(
m_j := \#\mathcal{M}_j\).
For each subdomain $i$, define a diagonal scaling matrix $D_i$ by assigning
to every local copy of a global degree of freedom $j$ the weight
\begin{equation}
\add{(D_i)_{R_i(j)\,R_i(j)}} = \frac{1}{m_j},
\qquad j\in \mathcal{N}_i^{\delta}.
\label{eq:multiplicity_scaling}
\end{equation}
Then the family $\{D_i\}_{i=1}^N$ satisfies the discrete partition-of-unity identity \eqref{eq:discrete_partition_unity}.

For finite element discretizations \add{and a given domain decomposition $\{\Omega_i\}$}, overlap is naturally defined through basis
function supports. Let $\{\phi_k\}_{k\in\mathcal{N}}$ be a global FE basis and
define the subdomain index set by
\begin{equation}
\mathcal{N}_i
:=
\{\, k\in\mathcal{N} \;:\;
\operatorname{supp}(\phi_k)\cap\Omega_i \neq \emptyset \,\}.
\label{eq:fe_subdomain_dofs}
\end{equation}
The multiplicity of a basis function is then
\begin{equation}
\mu_k
:=
\#\{\, i \;:\; k\in\mathcal{N}_i \,\},
\label{eq:fe_multiplicity}
\end{equation}
i.e.\ the number of subdomains whose overlap contains its support.
In this setting, multiplicity scaling simply reads
\[
\add{(D_i)_{R_i(k)\,R_i(k)}} = \frac{1}{\mu_k},
\]
which distributes the contribution of each basis function evenly across all
subdomains containing it.
\end{definition}

\begin{figure}[t]
\centering
\begin{minipage}[c]{0.55\textwidth}
  \centering
  \includegraphics[width=\textwidth]{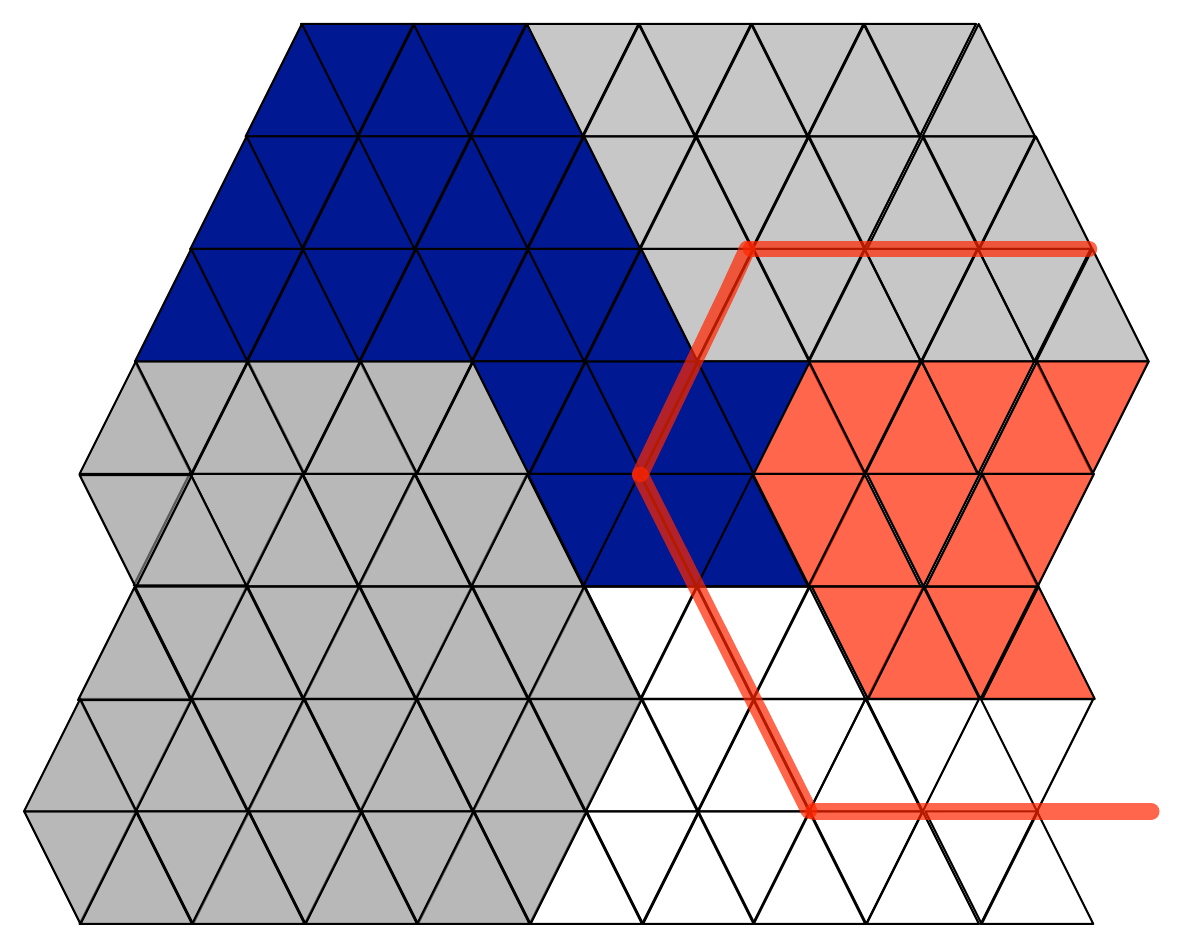}
\end{minipage}\hfill
\begin{minipage}[c]{0.38\textwidth}
  \caption{Overlapping finite element subdomains $\{\Omega_i\}$ covering $\Omega$.}
\label{fig:fe_overlap_2d}
\end{minipage}
\end{figure}
The discrete operators introduced above lead directly to standard additive Schwarz updates \eqref{eq:unified_one_level}.
%=================================================================
%-----------------------------------------------------------------
\subsection{Schwarz preconditioners in Krylov methods}
\label{sec:prec_krylov}

Large-scale PDE discretizations are rarely solved by stationary iterations.
In practice, Schwarz operators are used as \emph{preconditioners}
inside Krylov subspace methods. Consider the linear system
\[
A\mathbf{U}=\mathbf{F}.
\]
Given a linear preconditioner $M^{-1}$, a left-preconditioned Krylov method builds the Krylov space generated by $M^{-1}A$. The quality of the preconditioner is therefore governed most of the time by the spectral
properties of the operator $M^{-1}A$.

\add{Equivalently, one may apply $M^{-1}$ on the right and solve $A M^{-1} \mathbf{y} = \mathbf{F}$ with $\mathbf{U} = M^{-1}\mathbf{y}$. For GMRES, this \emph{right-preconditioned} formulation has the practical advantage that the residual norm computed by the algorithm is the true (unpreconditioned) residual norm, which makes the stopping criterion more reliable. This is the variant used by default in most modern implementations.}

In the unified algebraic form introduced earlier,
one-level Schwarz preconditioners read
\begin{equation}
M^{-1}
=
\sum_{i=1}^N R_i^T D_i\, B_i^{-1} R_i,
\label{eq:unified_schwarz_prec}
\end{equation}
where
\begin{itemize}
\item $R_i$ restricts to subdomain $i$,
\item $B_i$ is a local operator (Dirichlet or Robin type),
\item $D_i$ enforces a discrete partition of unity.
\end{itemize}

Applied to a residual, \eqref{eq:unified_schwarz_prec} performs:
\begin{enumerate}
\item restriction of the residual to each subdomain,
\item independent local solves,
\item weighted prolongation and global assembly.
\end{enumerate}
All steps are naturally parallel.

\begin{important}{Schwarz as an approximate inverse}
The operator $M^{-1}$ approximates $A^{-1}$ by combining many local inverses.
In practice, $B_i^{-1}$ is itself realized approximately
(e.g.\ sparse direct solvers or inner iterations),
yielding an inexact but effective preconditioner.
\end{important}
The most common one-level variants differ only in
\emph{assembly weights} and in the choice of local operator:

\begin{align}
M_{\mathrm{ASM}}^{-1}
&=
\sum_{i=1}^N R_i^T A_i^{-1} R_i, \label{eq:asm}
\\
M_{\mathrm{RAS}}^{-1}
&=
\sum_{i=1}^N R_i^T D_i A_i^{-1} R_i,
\\
\add{M_{\mathrm{ORAS}}^{-1}}
&\add{=
\sum_{i=1}^N R_i^T D_i B_i^{-1} R_i,}
\\
M_{\mathrm{SORAS}}^{-1}
&=
\sum_{i=1}^N R_i^T D_i B_i^{-1} D_i R_i.
\end{align}

\begin{overview}{Typical trade-offs}
\begin{itemize}
\item \textbf{ASM:}
fully additive, structurally symmetric (when $A_i$ are),
convenient for analysis and for SPD solvers.
\item \textbf{RAS:}
reduces redundant overlap contributions,
often improving parallel efficiency and time-to-solution.
\item \add{\textbf{ORAS:}
restricted assembly combined with optimized (Robin-type) local operators; in our experience this is the most popular optimized variant in practice, especially for wave-propagation problems.}
\item \textbf{SORAS:}
combines weighted assembly with optimized (Robin-type)
local operators, aiming to balance practical robustness
and favorable spectral properties.
\end{itemize}
\end{overview}

%-----------------------------------------------------------------
We now clarify the operator viewpoint that connects stationary iterations,
preconditioning, and Krylov methods. Consider the linear system
\(
A\mathbf{x}=\mathbf{b},
\)
and let $B$ be a nonsingular preconditioner approximating $A$.
The left-preconditioned Richardson iteration reads
\begin{equation}
\mathbf{x}^{n+1}
=
\mathbf{x}^n + B^{-1}(\mathbf{b}-A\mathbf{x}^n).
\label{eq:richardson_prec}
\end{equation}
Introducing
\[
C := B^{-1}A,
\qquad
\mathbf{f} := B^{-1}\mathbf{b},
\]
this becomes the fixed-point iteration
\begin{equation}
\mathbf{x}^{n+1}
=
(I-C)\mathbf{x}^n + \mathbf{f}.
\label{eq:fixed_point_form}
\end{equation}

\begin{important}{Stationary convergence criterion}
The iteration \eqref{eq:fixed_point_form} converges if
\[
\rho(I-C) < 1,
\]
where $\rho(\cdot)$ denotes the spectral radius.
Thus convergence depends entirely on the spectrum of the
preconditioned operator $C=B^{-1}A$.
\end{important}

This observation is fundamental:  
\emph{preconditioning modifies the operator whose spectrum controls convergence.}

Stationary iterations apply a fixed linear map at each step.
Krylov methods instead build increasingly rich polynomial approximations
to the inverse of $C$. Given the preconditioned system
\[
C\mathbf{x}=\mathbf{f},
\qquad C=B^{-1}A,
\]
and initial residual $\mathbf{r}^0=\mathbf{f}-C\mathbf{x}^0$,
Krylov methods construct iterates of the form
\begin{equation}
\mathbf{x}^n
=
\mathbf{x}^0 + \mathcal{P}_{n-1}(C)\mathbf{r}^0,
\label{eq:poly_view}
\end{equation}
where $\mathcal{P}_{n-1}$ is a polynomial of degree at most $n-1$.

\begin{backgroundinformation}{Polynomial interpretation}
Instead of repeatedly applying $(I-C)$ as in Richardson,
Krylov methods search for the polynomial
$\mathcal{P}_{n-1}$ that best approximates $C^{-1}$.
\end{backgroundinformation}

For SPD systems, this best approximation is achieved by the Conjugate Gradient (CG) method (Algorithm \ref{alg:cg}) which minimizes the error in the $C$-energy norm. For general non-symmetric systems, GMRES \cite{SaadSchultz1986} minimizes the residual
over the Krylov space. Also for these kind of problems $M_{ASM}^{-1}$ is a natural choice of preconditioner because it preserves symmetry.

\begin{algorithm}[!t]
\caption{Conjugate Gradient (CG) method (SPD case)}
\label{alg:cg}
\begin{algorithmic}[1]
\STATE Choose $\mathbf{x}_0$, set $\mathbf{r}_0=\mathbf{b}-A\mathbf{x}_0$, $\mathbf{p}_0=\mathbf{r}_0$.
\FOR{$k=0,1,2,\dots$ until convergence}
\STATE $\alpha_k = \dfrac{(\mathbf{r}_k,\mathbf{r}_k)}{(\mathbf{p}_k,A\mathbf{p}_k)}$
\STATE $\mathbf{x}_{k+1}=\mathbf{x}_k+\alpha_k\mathbf{p}_k$
\STATE $\mathbf{r}_{k+1}=\mathbf{r}_k-\alpha_k A\mathbf{p}_k$
\STATE $\beta_k = \dfrac{(\mathbf{r}_{k+1},\mathbf{r}_{k+1})}{(\mathbf{r}_k,\mathbf{r}_k)}$
\STATE $\mathbf{p}_{k+1}=\mathbf{r}_{k+1}+\beta_k\mathbf{p}_k$
\ENDFOR
\end{algorithmic}
\end{algorithm}

\begin{tips}{Where Schwarz enters}
To use Schwarz in Krylov methods, one replaces every residual evaluation by the \emph{preconditioned} residual $M_{ASM}^{-1}\mathbf{r}$ (left preconditioning) or inserts $M_{ASM}^{-1}$ inside the Krylov operator (right preconditioning). In both cases, the dominant cost is applying $M_{ASM}^{-1}$, i.e.\ performing the local subdomain solves. This is illustrated in Algorithm \ref{alg:pcg_asm}.
\end{tips}

\begin{proposition}[CG convergence with ASM preconditioning]
\label{prop:pcg_asm}
Assume $A$ is SPD. Let $\kappa=\operatorname{cond}(M_{\mathrm{ASM}}^{-1}A)$\add{${}=\lambda_{\max}(M_{\mathrm{ASM}}^{-1}A)/\lambda_{\min}(M_{\mathrm{ASM}}^{-1}A)$}. Then the iterates of preconditioned CG satisfy
\begin{equation}
\|\bar{\mathbf{x}}-\mathbf{x}_m\|_{M_{\mathrm{ASM}}^{-1}A}
\le
2\left(\frac{\sqrt{\kappa}-1}{\sqrt{\kappa}+1}\right)^m
\|\bar{\mathbf{x}}-\mathbf{x}_0\|_{M_{\mathrm{ASM}}^{-1}A},
\label{eq:pcg_bound}
\end{equation}
where $\bar{\mathbf{x}}$ denotes the exact solution.
\end{proposition}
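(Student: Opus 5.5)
Throughout, write $M:=M_{\mathrm{ASM}}$. The plan is to reduce the statement to the classical Chebyshev-polynomial bound for CG, applied to $C=M^{-1}A$ in the $M$-inner product.

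\textbf{Step 1: $M^{-1}$ is SPD.} Since $A$ is SPD and each $R_i$ is a Boolean restriction of full row rank, every local matrix $A_i=R_iAR_i^T$ is SPD. Hence $M^{-1}=\sum_i R_i^TA_i^{-1}R_i$ is symmetric and positive semidefinite. It is in fact definite: if $\mathbf{v}^TM^{-1}\mathbf{v}=0$, then $R_i\mathbf{v}=0$ for every $i$, and since the subdomains cover $\mathcal{N}$ this forces $\mathbf{v}=0$.

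\textbf{Step 2: $C$ is self-adjoint and positive in the $M$-inner product.} Equip $\mathbb{R}^{\#\mathcal{N}}$ with $(\mathbf{x},\mathbf{y})_M:=\mathbf{x}^TM\mathbf{y}$. Then $C=M^{-1}A$ satisfies $(C\mathbf{x},\mathbf{y})_M=\mathbf{x}^TA\mathbf{y}$, so $C$ is $M$-self-adjoint. Moreover $(C\mathbf{x},\mathbf{x})_M=\mathbf{x}^TA\mathbf{x}>0$ for $\mathbf{x}\neq0$. Consequently the spectrum of $C$ is real and positive, with extreme eigenvalues $\lambda_{\min}$ and $\lambda_{\max}$, and $\kappa$ as defined in the statement is the ratio $\lambda_{\max}/\lambda_{\min}$.

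The norm $\|\cdot\|_{M^{-1}A}$ in the statement should be read as the energy norm of $C$ in this geometry, $\|\mathbf{e}\|^2=(C\mathbf{e},\mathbf{e})_M=\mathbf{e}^TA\mathbf{e}$. So it coincides with the $A$-norm, which is the norm preconditioned CG actually minimizes. Identifying this norm correctly is the main subtle point; the remaining steps are standard.

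\textbf{Step 3: optimality of PCG.} Preconditioned CG (Algorithm~\ref{alg:cg} with $M^{-1}$ inserted) is exactly CG applied to $C\mathbf{x}=\mathbf{f}$ in the $M$-inner product. By the polynomial form \eqref{eq:poly_view}, the error $\mathbf{e}_m=\bar{\mathbf{x}}-\mathbf{x}_m$ equals $p_m(C)\mathbf{e}_0$, where $p_m$ ranges over polynomials of degree at most $m$ with $p_m(0)=1$. The iterate $\mathbf{x}_m$ minimizes the energy norm of this error over all such $p_m$. Expanding $\mathbf{e}_0$ in an $M$-orthonormal eigenbasis of $C$ then gives
\[
\|\mathbf{e}_m\|\le \min_{p_m(0)=1}\ \max_{\lambda\in[\lambda_{\min},\lambda_{\max}]}|p_m(\lambda)|\ \|\mathbf{e}_0\|.
\]

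\textbf{Step 4: Chebyshev bound.} Choose the shifted and scaled Chebyshev polynomial
\[
p_m(\lambda)=\frac{T_m\bigl((\lambda_{\max}+\lambda_{\min}-2\lambda)/(\lambda_{\max}-\lambda_{\min})\bigr)}{T_m\bigl((\kappa+1)/(\kappa-1)\bigr)}.
\]
Its maximum over $[\lambda_{\min},\lambda_{\max}]$ is $1/T_m\bigl((\kappa+1)/(\kappa-1)\bigr)$. Writing $T_m(z)=\tfrac12\bigl[(z+\sqrt{z^2-1})^m+(z-\sqrt{z^2-1})^m\bigr]$ with $z=(\kappa+1)/(\kappa-1)$, one has $z+\sqrt{z^2-1}=(\sqrt\kappa+1)/(\sqrt\kappa-1)$. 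Dropping the positive second term gives
\[
\frac{1}{T_m(z)}\le 2\left(\frac{\sqrt\kappa-1}{\sqrt\kappa+1}\right)^m,
\]
which is \eqref{eq:pcg_bound}.
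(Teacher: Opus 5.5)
The paper states this proposition without proof and relies only on the standard facts recalled around \eqref{eq:poly_view}: preconditioned CG is CG for $C=M_{\mathrm{ASM}}^{-1}A$, and it minimizes the error in the $C$-energy norm. Your argument is exactly that standard proof and is correct: $M_{\mathrm{ASM}}$ is SPD, $C$ is self-adjoint in the $M_{\mathrm{ASM}}$-inner product (so $\|\cdot\|_{M_{\mathrm{ASM}}^{-1}A}$ is really the $A$-norm, the right reading of the statement), followed by polynomial optimality and the shifted Chebyshev bound. The only loose end is the degenerate case $\kappa=1$, where your Chebyshev polynomial is undefined; there $C=\lambda_{\min}I$, and $p_1(\lambda)=1-\lambda/\lambda_{\min}$ gives $\mathbf{e}_1=0$, consistent with the bound.
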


\begin{algorithm}[!t]
\caption{\add{Preconditioned CG with the ASM preconditioner}}
\label{alg:pcg_asm}
\begin{algorithmic}[1]
\STATE Choose $\mathbf{x}_0$, set $\mathbf{r}_0=\mathbf{b}-A\mathbf{x}_0$, $\mathbf{z}_0=M_{\mathrm{ASM}}^{-1}\mathbf{r}_0$, $\mathbf{p}_0=\mathbf{z}_0$.
\FOR{$k=0,1,2,\dots$ until convergence}
\STATE $\alpha_k = \dfrac{(\mathbf{r}_k,\mathbf{z}_k)}{(\mathbf{p}_k,A\mathbf{p}_k)}$
\STATE $\mathbf{x}_{k+1}=\mathbf{x}_k+\alpha_k\mathbf{p}_k$
\STATE $\mathbf{r}_{k+1}=\mathbf{r}_k-\alpha_k A\mathbf{p}_k$
\STATE $\mathbf{z}_{k+1}=M_{\mathrm{ASM}}^{-1}\mathbf{r}_{k+1}$
\STATE $\beta_k = \dfrac{(\mathbf{r}_{k+1},\mathbf{z}_{k+1})}{(\mathbf{r}_k,\mathbf{z}_k)}$
\STATE $\mathbf{p}_{k+1}=\mathbf{z}_{k+1}+\beta_k\mathbf{p}_k$
\ENDFOR
\end{algorithmic}
\end{algorithm}

\begin{overview}{What one-level Schwarz preconditioners achieve — and what they do not}
The analysis above establishes that one-level additive Schwarz preconditioners provide a \emph{sound} and \emph{practically effective} framework for preconditioning SPD systems:
\begin{itemize}
\item \add{they reduce the residual through independent local subdomain solves on overlapping pieces of the domain;}
\item they are naturally parallel, with computational work confined to local problems.
\end{itemize}

However, the convergence bound~\eqref{eq:pcg_bound} also makes explicit that the performance of preconditioned CG depends entirely on the condition number
\(
\kappa(M_{\mathrm{ASM}}^{-1}A).
\)
While this quantity is bounded for a fixed domain decomposition, it generally \emph{deteriorates as the number of subdomains increases}.
\end{overview}

\begin{important}{Local efficiency vs.\ global scalability}
One-level Schwarz methods are designed to be \emph{locally efficient}:
they resolve short-range interactions and high-frequency error modes very effectively.
What they lack is an explicit mechanism for treating \emph{global} or \emph{long-range} components of the error.

As the domain is partitioned into more subdomains, global error modes must propagate across an increasing number of interfaces.
With only local overlap-based communication, this propagation becomes progressively slower,
leading to a growing condition number and, consequently, to increasing Krylov iteration counts.
\end{important}
\add{%
\begin{svgraybox}
\textbf{Position in the solver hierarchy.}
One-level Schwarz preconditioners control high-frequency (local)
error components efficiently.
However, as discussed earlier,
they do not guarantee weak scalability.
In large-scale or high-frequency regimes,
they must be combined with a coarse space correction,
leading to two-level methods.
\end{svgraybox}%
}

%=================================================================
\section{Advanced features: Coarse Space Corrections}
\label{sec:coarse_space_corrections}

One-level overlapping Schwarz methods (\add{ASM/RAS/ORAS}) are excellent \emph{smoothers}: each application of the preconditioner solves independent subdomain problems and exchanges information only through the overlap. This locality is an advantage for parallelism, but it becomes a limitation for \emph{scalability}. 

As the number of subdomains increases (typically because we use more processors), the algorithm must communicate \emph{global} information across a longer chain of subdomain interfaces. Purely local coupling through overlap then propagates global error components only slowly, which translates into increasing iteration counts. 

Coarse space corrections restore scalability by injecting an explicit \emph{global} mechanism: they represent and transport low-frequency (global) modes across the entire domain in a small number of iterations. 

We recall two standard notions of scalability.

\begin{backgroundinformation}{Strong vs.\ weak scalability}
\begin{itemize}
\item \textbf{Strong scalability (Amdahl)}: how the time-to-solution varies when the total problem size is fixed and the number of processors increases.
\item \textbf{Weak scalability (Gustafson)}: how the time-to-solution varies when the problem size per processor is kept (approximately) constant and the number of processors increases.
\end{itemize}
\end{backgroundinformation}
In domain decomposition, weak scalability is the most relevant metric: when we increase the number of processors, we typically increase the number of subdomains while keeping the local problem size roughly fixed. In that regime, \emph{a scalable method must preserve a constant amount of global communication per iteration} and should converge in a number of iterations that does not grow with the number of subdomains. For iterative methods, a practical proxy for weak scalability is therefore:
\begin{equation*}
\text{\emph{weak scalability}} \quad \Longleftrightarrow \quad
\text{iteration count remains bounded}
\ \ \text{(the cost per iteration is stable).}
\end{equation*}
Equivalently, the preconditioned condition number (or spectrum) should remain bounded independently of $N$. We now illustrate the previous discussion with a minimal numerical experiment.
As the number of subdomains increases while keeping the local problem size fixed,
the iteration count of one-level methods grows linearly.

\begin{figure}[H]
\centering
\begin{tabular}{|c|c|c|c|c|}
\hline
\# Subdomains & 8 & 16 & 32 & 64 \\
\hline
ASM Iterations & 18 & 35 & 66 & 128 \\
\hline
\end{tabular}
\caption{Iteration growth with subdomain count (1-level ASM)}
\end{figure}

% Requires in preamble:

\begin{figure}[H]
\centering
\begin{tikzpicture}[
  sub/.style={draw, line width=0.9pt},
  ov/.style={fill=black, opacity=0.08, draw=none},
  larr/.style={-{Latex[length=2.6mm]}, line width=0.9pt},
  garr/.style={-{Latex[length=2.6mm]}, line width=1.3pt, draw=blue!70!black},
  lab/.style={font=\small},
  scale=0.95
]

%------------------------------------------------------------
% Parameters (keep these as plain integers where needed)
%------------------------------------------------------------
\def\N{7}        % number of subdomains
\def\s{1.1}      % subdomain size
\def\dx{1.0}     % shift between subdomains (overlap if dx < s)
\def\yA{0.0}     % y-position (one-level row)
\def\yB{-2.5}    % y-position (two-level row)

%------------------------------------------------------------
% One-level row: draw squares
%------------------------------------------------------------
\foreach \i in {1,...,\N}{
  \pgfmathsetmacro{\x}{(\i-1)*\dx}
  \draw[sub] (\x,\yA) rectangle ++(\s,\s);
  \node[lab] at (\x+0.5*\s,\yA+0.5*\s) {$\Omega_{\i}$};
}

% One-level row: overlaps + local neighbor arrows (only between 1..N-1)
\foreach \i in {1,...,\numexpr\N-1\relax}{
  \pgfmathsetmacro{\x}{(\i-1)*\dx}
  \draw[ov] (\x+\dx,\yA) rectangle ++(\s-\dx,\s);

  % local communication arrows between neighbors (bidirectional)
  \draw[larr] (\x+\s,\yA+0.72*\s) -- (\x+\dx,\yA+0.72*\s);
  \draw[larr] (\x+\dx,\yA+0.28*\s) -- (\x+\s,\yA+0.28*\s);
}

\node[lab, anchor=west] at (0,\yA+\s+0.35) {\textbf{One-level Schwarz: local communication only}};

%------------------------------------------------------------
% Two-level row: draw squares
%------------------------------------------------------------
\foreach \i in {1,...,\N}{
  \pgfmathsetmacro{\x}{(\i-1)*\dx}
  \draw[sub] (\x,\yB) rectangle ++(\s,\s);
  \node[lab] at (\x+0.5*\s,\yB+0.5*\s) {$\Omega_{\i}$};
}

% Two-level row: overlaps + local arrows
\foreach \i in {1,...,\numexpr\N-1\relax}{
  \pgfmathsetmacro{\x}{(\i-1)*\dx}
  \draw[ov] (\x+\dx,\yB) rectangle ++(\s-\dx,\s);

  \draw[larr] (\x+\s,\yB+0.72*\s) -- (\x+\dx,\yB+0.72*\s);
  \draw[larr] (\x+\dx,\yB+0.28*\s) -- (\x+\s,\yB+0.28*\s);
}

%------------------------------------------------------------
% Coarse "global" component + global arrows
%------------------------------------------------------------
\pgfmathsetmacro{\xmid}{0.5*(\N-1)*\dx+0.5*\s}

\draw[blue!70!black, line width=1.1pt, rounded corners]
  (\xmid-1.1,\yB-0.85) rectangle ++(2.2,0.55);
\node[lab, text=blue!70!black] at (\xmid,\yB-0.575) {coarse space};

% global arrows to ALL subdomains (Reviewer 1, p. 17 suggestion)
\foreach \i in {1,...,\N}{
  \pgfmathsetmacro{\xc}{(\i-1)*\dx + 0.5*\s}
  \draw[garr, draw=blue!70!black] % red colour to mark Reviewer 1's revision
    (\xmid,\yB-0.30)
    .. controls (\xmid,\yB+0.15) and (\xc,\yB+0.15) ..
    (\xc,\yB+0.5*\s);
}

\node[lab, anchor=west] at (0,\yB+\s+0.35) {\textbf{Two-level Schwarz: local + global communication}};

%------------------------------------------------------------
% Legend
%------------------------------------------------------------
\begin{scope}[shift={(0,-4.2)}]
  \draw[larr] (0,0) -- (0.9,0);
  \node[lab, anchor=west] at (1.05,0) {local communication};

  \draw[garr] (5.0,0) -- (5.9,0);
  \node[lab, anchor=west, text=blue!70!black] at (6.0,0) {global (coarse) communication};
\end{scope}

\end{tikzpicture}
\caption{Overlapping subdomains on a long domain. One-level Schwarz exchanges information only with neighbors through overlaps, so global effects propagate gradually across the chain. A two-level method adds a coarse component (blue) that provides a global communication channel, enabling rapid transport of low-frequency information independently of the number of subdomains.}
\label{fig:overlap_local_global_comm}
\end{figure}

This growth is a direct consequence of the
\emph{locality of information exchange} in one-level Schwarz methods.
Each application of the preconditioner only couples neighboring subdomains,
so information propagates across the computational domain through a sequence of
local interactions.
As the number of subdomains increases, the distance (in subdomain graph sense)
over which global error components must travel also increases.

From a spectral viewpoint, this results in:
\begin{itemize}
\item an increasing number of global, low-frequency error modes that are weakly affected by local solves;
\item decreasing minimal eigenvalues of the preconditioned operator;
\item a growing condition number and, consequently, increasing Krylov iteration counts.
\end{itemize}

In short, increasing the number of subdomains increases the \emph{need for global information transfer},
but one-level methods only provide \emph{local} transfer.

\subsection{Condition numbers and weak scalability}
\label{subsec:cond_number_weak_scalability}

We now formalize the notion of weak scalability using spectral estimates. In the case of the CG method, convergence is governed by the spectrum of the preconditioned operator, and in particular by its condition number.

Consider a family of domain decompositions indexed by the number of subdomains $N$,
with the local problem size kept fixed.
A Schwarz preconditioner is said to be \emph{weakly scalable} if the number of
iterations required to reach a prescribed accuracy remains bounded as $N$ increases.
For SPD problems, this is equivalent to requiring that the condition number of the
preconditioned operator remains bounded independently of $N$.

\begin{lemma}[Spectral equivalence implies condition number control]
\label{lem:cond_bound}
Let $A$ and $M_{\mathrm{ASM}}$ be SPD. If there exist constants $C_1,C_2>0$ such that,
for all $\mathbf{x}\in\mathbb{R}^n$,
\[
C_1 \,(M_{\mathrm{ASM}}\mathbf{x},\mathbf{x})
\;\le\;
(A\mathbf{x},\mathbf{x})
\;\le\;
C_2 \,(M_{\mathrm{ASM}}\mathbf{x},\mathbf{x}),
\]
then the eigenvalues of $M_{\mathrm{ASM}}^{-1}A$ satisfy
\[
\lambda_{\min}(M_{\mathrm{ASM}}^{-1}A)\ge C_1,
\qquad
\lambda_{\max}(M_{\mathrm{ASM}}^{-1}A)\le C_2,
\qquad
\kappa(M_{\mathrm{ASM}}^{-1}A)\le \frac{C_2}{C_1}.
\]
\end{lemma}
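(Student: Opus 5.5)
The plan is to reduce everything to a generalized Rayleigh quotient in the $M_{\mathrm{ASM}}$-inner product. Since $M_{\mathrm{ASM}}$ is SPD, it defines an inner product $(\mathbf{x},\mathbf{y})_{M}:=(M_{\mathrm{ASM}}\mathbf{x},\mathbf{y})$. In this inner product the operator $M_{\mathrm{ASM}}^{-1}A$ is self-adjoint: for all $\mathbf{x},\mathbf{y}$,
\[
(M_{\mathrm{ASM}}^{-1}A\mathbf{x},\mathbf{y})_M=(A\mathbf{x},\mathbf{y})=(\mathbf{x},A\mathbf{y})=(\mathbf{x},M_{\mathrm{ASM}}^{-1}A\mathbf{y})_M .
\]
Consequently $M_{\mathrm{ASM}}^{-1}A$ has real eigenvalues and an $M$-orthonormal eigenbasis. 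Its extreme eigenvalues are the extrema of the quotient
\[
\frac{(M_{\mathrm{ASM}}^{-1}A\mathbf{x},\mathbf{x})_M}{(\mathbf{x},\mathbf{x})_M}=\frac{(A\mathbf{x},\mathbf{x})}{(M_{\mathrm{ASM}}\mathbf{x},\mathbf{x})},\qquad \mathbf{x}\neq 0 .
\]
An equivalent route, if one prefers to avoid changing the inner product, is the similarity transformation $M_{\mathrm{ASM}}^{-1}A\sim M_{\mathrm{ASM}}^{-1/2}AM_{\mathrm{ASM}}^{-1/2}$. The right-hand matrix is symmetric, and the substitution $\mathbf{y}=M_{\mathrm{ASM}}^{1/2}\mathbf{x}$ turns its standard Rayleigh quotient into the one above.

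The key steps, in order, are as follows.
\begin{enumerate}
\item Establish the self-adjointness (or the similarity) as above, so that the Courant--Fischer characterization applies.
\item Write $\lambda_{\min}=\min_{\mathbf{x}\neq0}(A\mathbf{x},\mathbf{x})/(M_{\mathrm{ASM}}\mathbf{x},\mathbf{x})$ and $\lambda_{\max}=\max_{\mathbf{x}\neq0}(A\mathbf{x},\mathbf{x})/(M_{\mathrm{ASM}}\mathbf{x},\mathbf{x})$.
\item Divide the assumed two-sided inequality by $(M_{\mathrm{ASM}}\mathbf{x},\mathbf{x})>0$, which is allowed by positive definiteness. This gives $C_1\le$ quotient $\le C_2$ for every $\mathbf{x}\neq0$, and hence $\lambda_{\min}\ge C_1$ and $\lambda_{\max}\le C_2$.
\end{enumerate}
A direct check is also available: if $M_{\mathrm{ASM}}^{-1}A\mathbf{x}=\lambda\mathbf{x}$, then $A\mathbf{x}=\lambda M_{\mathrm{ASM}}\mathbf{x}$. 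Pairing with $\mathbf{x}$ gives $\lambda=(A\mathbf{x},\mathbf{x})/(M_{\mathrm{ASM}}\mathbf{x},\mathbf{x})\in[C_1,C_2]$, once we know $\lambda$ and $\mathbf{x}$ may be taken real.

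Finally, all eigenvalues are positive because $C_1>0$. The condition number as defined in Proposition~\ref{prop:pcg_asm}, namely $\lambda_{\max}/\lambda_{\min}$, is therefore bounded by $C_2/C_1$.

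The only point needing care is justifying that the eigenvalues are real and that the quotient characterizes them, since $M_{\mathrm{ASM}}^{-1}A$ is not symmetric in the Euclidean sense. The self-adjointness in the $M$-inner product settles this. If one wants to avoid the spectral theorem entirely, the direct argument works as well: take the eigenpair possibly complex, use the Hermitian forms, and observe that both $(A\mathbf{x},\bar{\mathbf{x}})$ and $(M_{\mathrm{ASM}}\mathbf{x},\bar{\mathbf{x}})$ are real and positive, which forces $\lambda$ to be real.
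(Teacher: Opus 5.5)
Your proof is correct and complete. Self-adjointness of $M_{\mathrm{ASM}}^{-1}A$ in the $M_{\mathrm{ASM}}$-inner product (equivalently, its similarity to $M_{\mathrm{ASM}}^{-1/2}AM_{\mathrm{ASM}}^{-1/2}$) turns the extreme eigenvalues into the extrema of $(A\mathbf{x},\mathbf{x})/(M_{\mathrm{ASM}}\mathbf{x},\mathbf{x})$, and the hypothesis then gives $C_1\le\lambda_{\min}\le\lambda_{\max}\le C_2$, hence $\kappa\le C_2/C_1$.

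The paper states this lemma without proof, so there is nothing to compare against. Yours is the standard argument, and it uses the same Rayleigh-quotient-in-a-weighted-inner-product technique the paper applies (with the $A$-inner product) in its proof of Lemma~\ref{lem:lmax_coloring}.
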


\begin{svgraybox}
\textbf{Key implication.}
Weak scalability is achieved if the constants $C_1$ and $C_2$ can be chosen
independently of the number of subdomains $N$.
Equivalently, the spectrum of $M_{\mathrm{ASM}}^{-1}A$ must remain uniformly bounded.
\end{svgraybox}

\noindent\textbf{Upper bound: local interactions are sufficient.}
We first note that the largest eigenvalue can be controlled purely by local
considerations.

\begin{lemma}[Upper bound via coloring]
\label{lem:lmax_coloring}
Let $\mathrm{col}(\cdot)$ be a coloring of the subdomains using $\mathcal{N}_c$ colors
such that
\[
\mathrm{col}(k)=\mathrm{col}(l)
\quad \Longrightarrow \quad
\add{R_l\, A\, R_k^T = 0.}
\]
Then
\[
\lambda_{\max}(M_{\mathrm{ASM}}^{-1}A)\le \mathcal{N}_c.
\]
\end{lemma}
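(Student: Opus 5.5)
We bound $\lambda_{\max}$ through a Rayleigh quotient in the $A$-inner product. The key observation is that $M_{\mathrm{ASM}}^{-1}A=\sum_{i=1}^N P_i$ with $P_i:=R_i^T A_i^{-1}R_i A$, where $A_i=R_iAR_i^T$. Each $P_i$ is the $A$-orthogonal projection onto $V_i:=\operatorname{range}(R_i^T)$: indeed $P_i^2=P_i$, and $AP_i=AR_i^TA_i^{-1}R_iA$ is symmetric. Hence $M_{\mathrm{ASM}}^{-1}A$ is self-adjoint and positive semidefinite in $(\cdot,\cdot)_A$. This gives
\[
\lambda_{\max}(M_{\mathrm{ASM}}^{-1}A)=\sup_{\mathbf{x}\neq 0}\frac{(A\,M_{\mathrm{ASM}}^{-1}A\mathbf{x},\mathbf{x})}{(A\mathbf{x},\mathbf{x})},
\]
and it suffices to show $(AM_{\mathrm{ASM}}^{-1}A\mathbf{x},\mathbf{x})\le \mathcal{N}_c\,(A\mathbf{x},\mathbf{x})$.

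Next we group the sum by colors. For each color $c$, set $Q_c:=\sum_{\mathrm{col}(i)=c}P_i$. If $\mathrm{col}(k)=\mathrm{col}(l)$ with $k\neq l$, the hypothesis $R_lAR_k^T=0$ means $(A R_k^T\mathbf{v},R_l^T\mathbf{w})=0$ for all local vectors, so $V_k\perp_A V_l$. (The hypothesis is only meant for $k\neq l$, since $R_kAR_k^T=A_k\neq0$.) Consequently $P_kP_l=0$ for distinct same-colored subdomains, because $P_k$ vanishes on $V_l$. Then $Q_c$ is a sum of mutually $A$-orthogonal projections, hence itself an $A$-orthogonal projection, onto $\bigoplus_{\mathrm{col}(i)=c}V_i$. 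Therefore $(AQ_c\mathbf{x},\mathbf{x})=\|Q_c\mathbf{x}\|_A^2\le\|\mathbf{x}\|_A^2$.

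Summing over the $\mathcal{N}_c$ colors gives
\[
(AM_{\mathrm{ASM}}^{-1}A\mathbf{x},\mathbf{x})=\sum_{c=1}^{\mathcal{N}_c}(AQ_c\mathbf{x},\mathbf{x})\le \mathcal{N}_c\,(A\mathbf{x},\mathbf{x}),
\]
and therefore $\lambda_{\max}\le\mathcal{N}_c$.

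The main point needing care is the step showing that $Q_c$ is a projection, specifically the cross terms $P_kP_l=0$. We verify it directly:
\[
P_kP_l=R_k^TA_k^{-1}(R_kAR_l^T)A_l^{-1}R_lA=0 .
\]
An alternative that avoids projections is to write $\mathbf{y}_i=A_i^{-1}R_iA\mathbf{x}$. Then $(AM^{-1}A\mathbf{x},\mathbf{x})=\sum_i(A\mathbf{x},R_i^T\mathbf{y}_i)$, and by Cauchy--Schwarz applied per color to $\sum_{\mathrm{col}(i)=c}R_i^T\mathbf{y}_i$, whose $A$-norm squared splits because the cross terms vanish, we reach the same bound.
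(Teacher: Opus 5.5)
Your proof is correct and follows the paper's argument essentially step for step. You write $M_{\mathrm{ASM}}^{-1}A=\sum_i P_i$ with $A$-orthogonal projectors $P_i$, group them by colour into projectors $Q_c$ using the $A$-orthogonality of same-coloured ranges, bound each $\langle Q_c x,x\rangle_A\le\|x\|_A^2$, and conclude via the Rayleigh quotient of the $A$-self-adjoint operator. Your explicit check that $P_kP_l=0$, and your remark that the colouring hypothesis must be read for $k\neq l$, usefully spell out points the paper leaves implicit.
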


\add{%
\begin{proof}
We follow the projector-based argument from Saad's
\emph{Iterative Methods for Sparse Linear Systems} \cite{saad}.
Since $A$ is SPD, it defines the energy inner product
$\langle x,y\rangle_A := y^T A\, x$ on $\mathbb{R}^n$.
An operator $T$ is called \emph{$A$-self-adjoint} if $AT = T^T A$,
and the $A$-self-adjoint operators have real eigenvalues, so it makes
sense to speak of their ``spectrum in the $A$-inner product''. Write
\[
M_{\mathrm{ASM}}^{-1}A
=
\sum_{i=1}^N R_i^T A_i^{-1} R_i\, A
=
\sum_{i=1}^N P_i,
\qquad
P_i := R_i^T A_i^{-1} R_i\, A.
\]
A direct computation using $A_i^T=A_i$ shows that $A P_i = P_i^T A$,
so each $P_i$ is $A$-self-adjoint. Moreover $P_i^2 = P_i$, so $P_i$ is
the $A$-orthogonal projector onto $\mathrm{range}(R_i^T)$; its
eigenvalues are in $\{0,1\}$.

Group the subdomains by colour class and set, for
$c=1,\dots,\mathcal{N}_c$,
\[
Q_c := \sum_{\mathrm{col}(i)=c} P_i.
\]
The colouring assumption $R_l A R_k^T=0$ for
$\mathrm{col}(k)=\mathrm{col}(l)$ implies that the ranges
$\mathrm{range}(R_i^T)$ associated with subdomains of the same colour
are $A$-orthogonal. Hence each $Q_c$ is itself an $A$-orthogonal
projector (onto the direct sum of these ranges), and in particular
\[
0 \;\le\; \langle Q_c x, x\rangle_A \;\le\; \langle x, x\rangle_A
\qquad \text{for all } x\in\mathbb{R}^n.
\]
Summing over the $\mathcal{N}_c$ colour classes,
\[
M_{\mathrm{ASM}}^{-1}A
=
\sum_{c=1}^{\mathcal{N}_c} Q_c
\qquad\Longrightarrow\qquad
\langle (M_{\mathrm{ASM}}^{-1}A)\,x, x\rangle_A
\;\le\; \mathcal{N}_c\, \langle x, x\rangle_A.
\]
Since $M_{\mathrm{ASM}}^{-1}A$ is itself $A$-self-adjoint (as a sum of
$A$-self-adjoint operators), the Rayleigh-quotient characterisation of
its largest eigenvalue yields
\[
\lambda_{\max}(M_{\mathrm{ASM}}^{-1}A)
\;=\;
\max_{x\neq 0}
\frac{\langle (M_{\mathrm{ASM}}^{-1}A)\,x, x\rangle_A}{\langle x,x\rangle_A}
\;\le\;
\mathcal{N}_c.
\]
\end{proof}
}%

This estimate shows that the \emph{upper} part of the spectrum is benign and does
not obstruct scalability.

\noindent\textbf{Lower bound: the true bottleneck.}
In contrast, one-level methods generally fail to control the smallest eigenvalue
as the number of subdomains increases.
This degradation is already visible for simple Poisson problems.

\begin{figure}[H]
\centering
\begin{minipage}[c]{0.8\textwidth}
\includegraphics[width=0.32\textwidth]{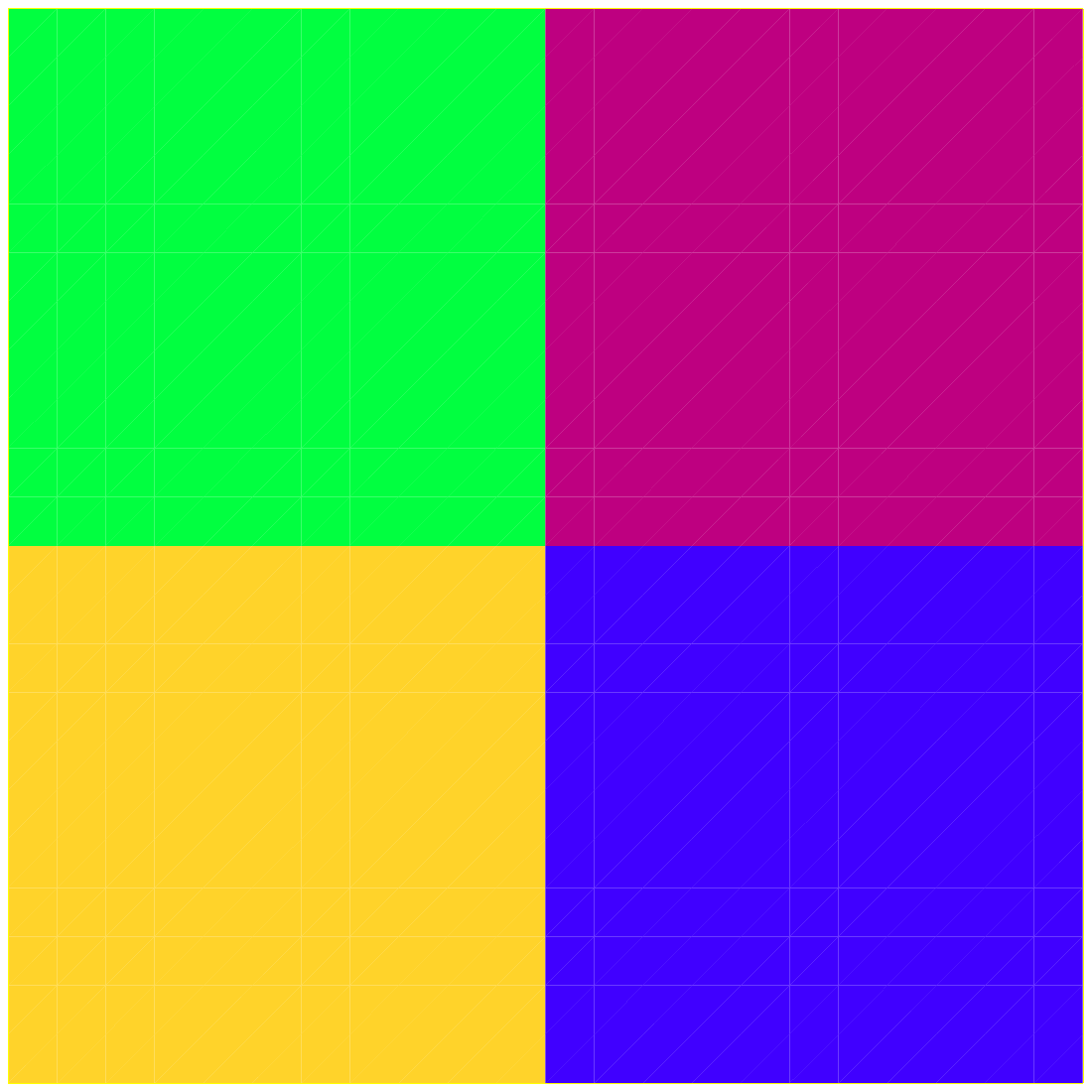}
\includegraphics[width=0.32\textwidth]{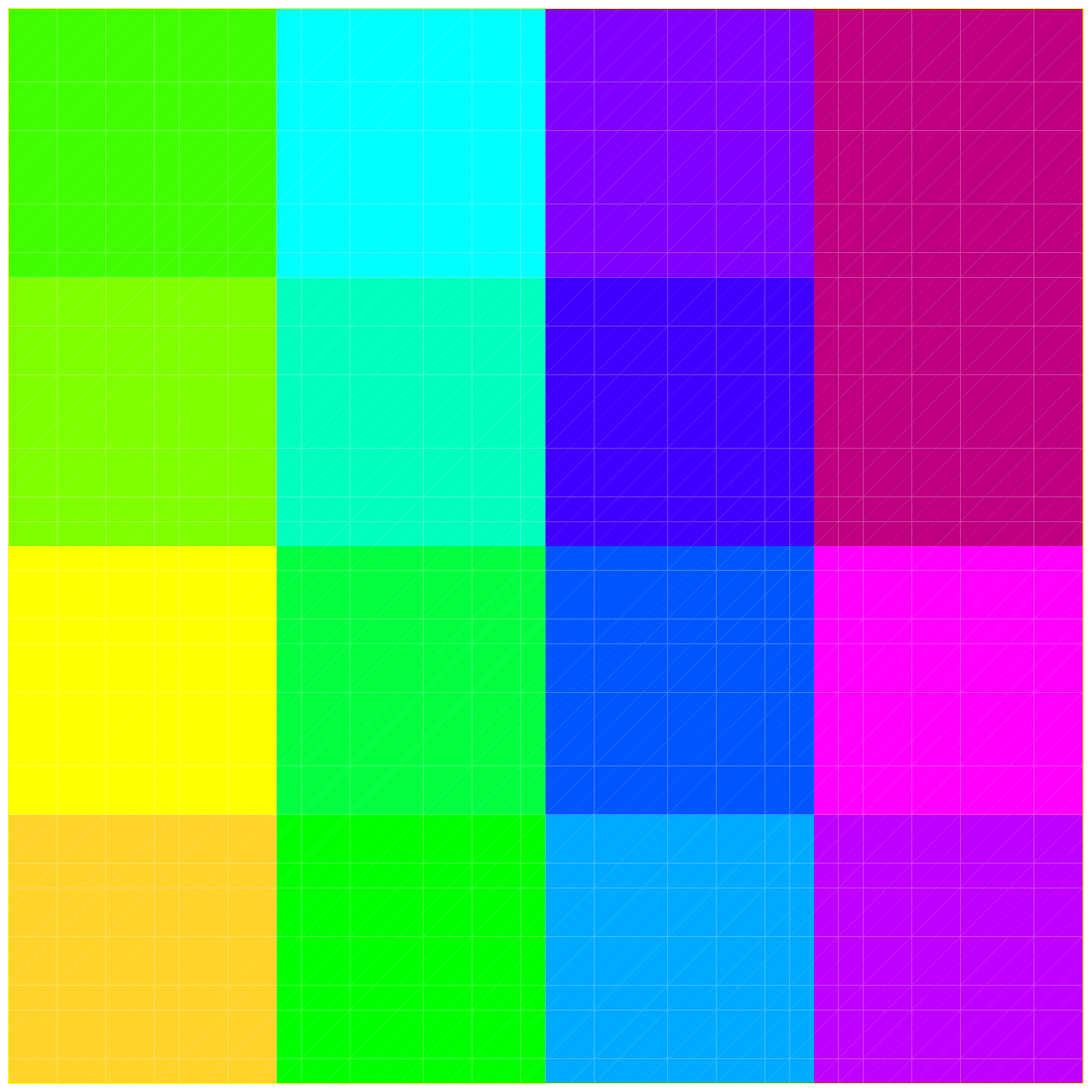}
\includegraphics[width=0.32\textwidth]{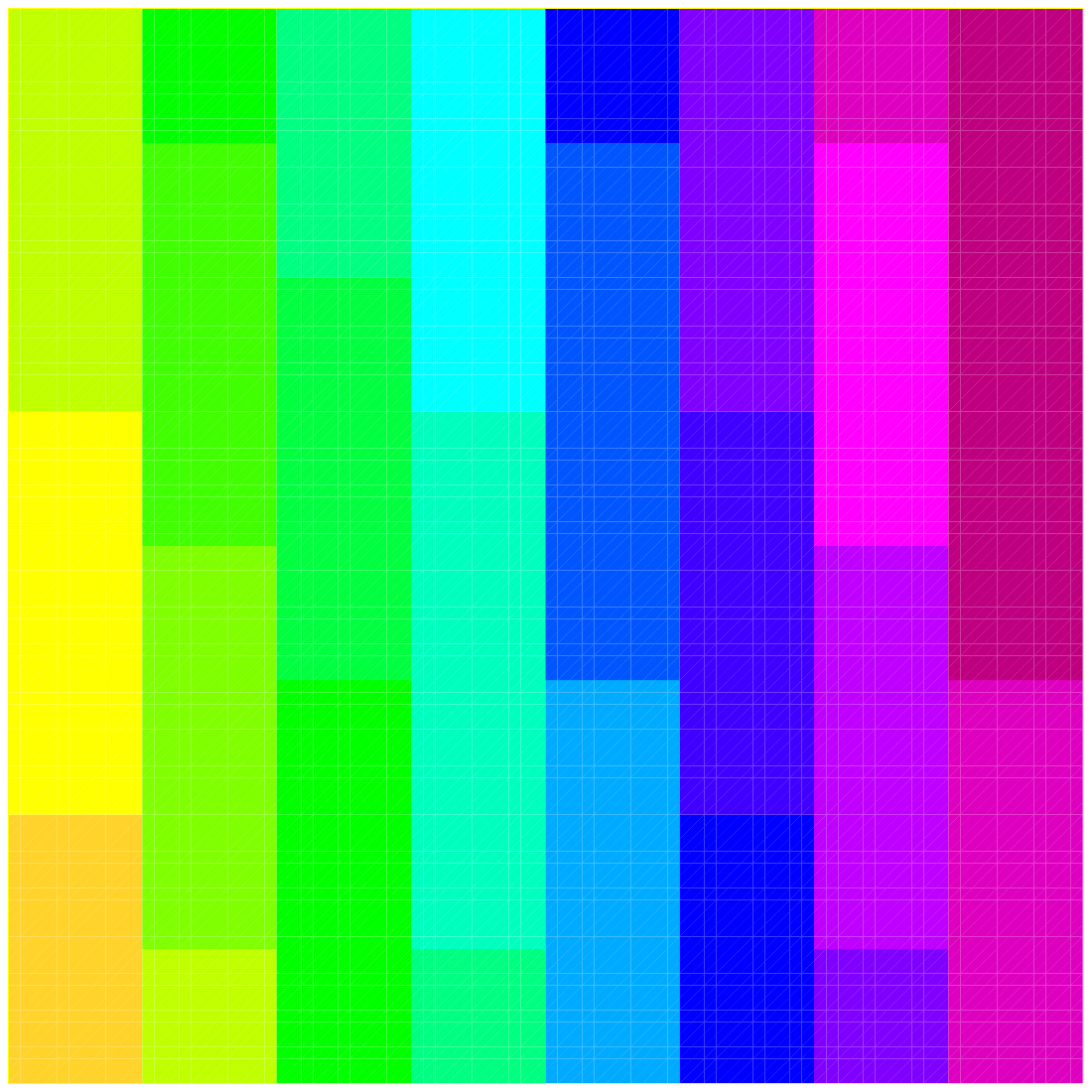}
\end{minipage}\hfill
\begin{minipage}[c]{0.2\textwidth}
\caption{Increasing domain decomposition granularity.}
\label{fig:poisson_granularity}
\end{minipage}
\end{figure}

\begin{table}[!t]
\caption{\add{Poisson problem on a \emph{fixed} $20\times 20$ global grid, two-layer overlap: iteration counts for one-level ASM as the number of subdomains increases (strong scaling regime; the per-subdomain problem size therefore decreases with the number of subdomains).}}
\label{tab:poisson_iter_table}
\centering
\begin{tabular}{lccc}
\hline\noalign{\smallskip}
\# subdomains & $2\times 2$ & $4\times 4$ & $8\times 8$\\
\noalign{\smallskip}\svhline\noalign{\smallskip}
ASM iterations & 20 & 36 & 64\\
\noalign{\smallskip}\hline
\end{tabular}
\end{table}

\begin{important}{Spectral interpretation}
The loss of weak scalability is caused by a small number of
\emph{global, low-frequency modes} that are only weakly affected by local subdomain
solves.
As $N$ increases, these modes lead to smaller values of
$\lambda_{\min}(M_{\mathrm{ASM}}^{-1}A)$,
which in turn drives the growth of the condition number and the iteration count.
\end{important}

\begin{svgraybox}
\textbf{Conclusion.}
One-level Schwarz methods control high-frequency error components efficiently,
but they do not provide a mechanism to uniformly bound the smallest eigenvalues
as the number of subdomains grows.
Restoring weak scalability therefore requires enriching the method with a
\emph{coarse space} that explicitly represents these global modes.
\end{svgraybox}

%-----------------------------------------------------------------
\subsection{Coarse space correction and two-level Schwarz methods}
\label{subsec:coarse_space_correction}

One-level Schwarz methods fail to achieve weak scalability because they do not
efficiently reduce a small number of global error components.
We now introduce a \emph{coarse space correction} that explicitly targets these
slowly converging modes. Consider the linear system \(A\mathbf{x}=\mathbf{b},
\label{eq:Ax=b} \) and let $\mathbf{y}$ be a current approximation with residual \(
\mathbf{r}:=\mathbf{b}-A\mathbf{y}.\)

\begin{backgroundinformation}{Typical slow modes}
The error components that dominate the convergence of one-level methods include:
\begin{itemize}
\item \textbf{Constant functions} for scalar diffusion operators (near-null modes for Neumann problems);
\item \textbf{Rigid body motions} in linear elasticity;
\item \textbf{Near-kernel modes} induced by high-contrast coefficients or weak coupling across subdomains.
\end{itemize}
These modes are global in nature and cannot be efficiently eliminated by purely local subdomain solves.
\end{backgroundinformation}
Let $Z\in\mathbb{R}^{n\times n_c}$ be a matrix whose columns form a basis of a
low-dimensional \emph{space} intended to approximate the slow modes
described above. We seek a correction in the subspace $\mathrm{range}(Z)$ by minimizing the
residual in the energy norm induced by $A^{-1}$.

\begin{lemma}[Galerkin coarse correction]
\label{lem:galerkin_coarse}
The correction $\delta\mathbf{x}=Z\beta$ that minimizes
\[
\|A(\mathbf{y}+Z\beta)-\mathbf{b}\|_{A^{-1}}
\]
is given by
\[
\beta=(Z^TAZ)^{-1}Z^T\mathbf{r}.
\]
\end{lemma}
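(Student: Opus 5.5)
The plan is to reduce the minimisation to an unconstrained quadratic problem in $\beta\in\mathbb{R}^{n_c}$ and solve its normal equations. Assume $A$ is SPD, so that $\|\mathbf{v}\|_{A^{-1}}^2=\mathbf{v}^TA^{-1}\mathbf{v}$ is a norm. Assume also that $Z$ has full column rank. Since $\mathbf{r}=\mathbf{b}-A\mathbf{y}$, we have $A(\mathbf{y}+Z\beta)-\mathbf{b}=AZ\beta-\mathbf{r}$. The objective therefore becomes
\[
J(\beta):=\|AZ\beta-\mathbf{r}\|_{A^{-1}}^2=(AZ\beta-\mathbf{r})^TA^{-1}(AZ\beta-\mathbf{r}).
\]
Expanding and using $A^T=A$ gives
\[
J(\beta)=\beta^T Z^TAZ\,\beta-2\,\beta^TZ^T\mathbf{r}+\mathbf{r}^TA^{-1}\mathbf{r}.
\]
The $A^{-1}$ weighting is what makes the middle term collapse to $Z^T\mathbf{r}$, with no $A$ left inside. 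This is why this particular norm is chosen: it is the same as minimising the error $\mathbf{x}-\mathbf{y}-Z\beta$ in the $A$-norm.

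The next step is to show that the coarse matrix $A_0:=Z^TAZ$ is SPD. For $\beta\neq0$, full column rank gives $Z\beta\neq0$, and hence $\beta^TA_0\beta=(Z\beta)^TA(Z\beta)>0$. Consequently $J$ is a strictly convex quadratic with a unique minimiser. That minimiser is characterised by $\nabla J(\beta)=2A_0\beta-2Z^T\mathbf{r}=0$, which yields $\beta=(Z^TAZ)^{-1}Z^T\mathbf{r}$. As a cross-check, I would complete the square:
\[
J(\beta)=\|\beta-A_0^{-1}Z^T\mathbf{r}\|_{A_0}^2+\text{const}.
\]
This shows directly that the minimiser is unique.

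There is no serious obstacle here. The only point needing care is stating the hypotheses the lemma leaves implicit: $A$ must be SPD for the $A^{-1}$-norm to make sense, and $Z$ must have full column rank for $Z^TAZ$ to be invertible. If $Z$ were rank-deficient, the minimising correction $Z\beta$ would still be unique, but $\beta$ would not be, and one would use a pseudo-inverse instead. As a final remark I would note the consequence: the corrected iterate is $\mathbf{y}+Z(Z^TAZ)^{-1}Z^T\mathbf{r}$, and $Z(Z^TAZ)^{-1}Z^TA$ is the $A$-orthogonal projector onto $\mathrm{range}(Z)$. This has the same structure as the projectors $P_i$ used in the proof of \cref{lem:lmax_coloring}, so the coarse correction slots naturally into the additive framework as an extra term $P_0$.
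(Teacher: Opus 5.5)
Your proof is correct and follows the same route as the paper, which rewrites the objective as $(AZ\beta-\mathbf{r},AZ\beta-\mathbf{r})_{A^{-1}}$ and states that minimizing it gives the normal equations $Z^TAZ\,\beta=Z^T\mathbf{r}$. Your version supplies what the paper leaves implicit: the explicit expansion of the quadratic, the hypotheses that $A$ is SPD and $Z$ has full column rank, and the resulting invertibility of $Z^TAZ$ and uniqueness of the minimizer.
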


\begin{proof}
Minimizing the functional
\[
\|A(\mathbf{y}+Z\beta)-\mathbf{b}\|_{A^{-1}}^2
=
(AZ\beta-\mathbf{r},\,AZ\beta-\mathbf{r})_{A^{-1}}
\]
leads to the normal equations
\[
Z^TAZ\,\beta = Z^T\mathbf{r},
\]
which yields the stated expression for $\beta$.
\end{proof}

\begin{definition}[Coarse correction and projection]
The coarse correction is 
\begin{equation}
\delta\mathbf{x}
=
Z(Z^TAZ)^{-1}Z^T\mathbf{r}
=: \add{Q}\,\mathbf{r}.
\label{eq:galerkin_proj}
\end{equation}
\add{The operator $Q$ is the \emph{coarse-solve} operator (mapping a residual to a correction), and is \emph{not} itself a projection. The associated $A$-orthogonal projection onto the coarse space $\mathrm{range}(Z)$ is
\[
\Pi_0 := Q\,A = Z(Z^TAZ)^{-1}Z^T A,
\]
which projects any vector on its $A$-orthogonal component in $\mathrm{range}(Z)$. 
}
\end{definition}
We now combine the coarse correction with the one-level additive Schwarz method.
Define the coarse restriction operator
\[
R_0 := Z^T,
\qquad
A_0 := R_0 A R_0^T = Z^TAZ.
\]

\begin{definition}[Two-level additive Schwarz preconditioner]
\label{def:two_level_asm}
The two-level additive Schwarz preconditioner is defined as
\begin{equation}
M_{\mathrm{ASM},2}^{-1}
=
\underbrace{R_0^T A_0^{-1} R_0}_{\text{coarse solve}}
+
\underbrace{\sum_{i=1}^N R_i^T A_i^{-1} R_i}_{\text{one-level ASM}}.
\label{eq:two_level_asm}
\end{equation}
\end{definition}
The introduction of a suitable coarse space restores weak scalability by
providing an explicit global communication channel that is absent in one-level
methods.
\begin{important}{Interpretation and remarks}
\begin{itemize}
\item The structure of \eqref{eq:two_level_asm} mirrors that of one-level ASM,
augmented by a global correction acting in a low-dimensional space.
\item The coarse problem is small and can be solved redundantly or on a dedicated
coarse communicator with negligible overhead.
\item The key design issue is the choice of the coarse space $Z$:
it must capture the relevant global modes while remaining of modest dimension.
\end{itemize}
\end{important}
 The remainder of this section is devoted to concrete and robust choices
of $Z$.

%-----------------------------------------------------------------
\subsection{The Nicolaides coarse space}
\label{subsec:nicolaides}

The simplest and most classical coarse space for two-level additive Schwarz
methods was introduced by Nicolaides (1987) \cite{Nicolaides1987}. Its guiding principle is that, for scalar diffusion problems, the dominant
slow-to-converge error components are often \emph{nearly constant} over large
portions of the domain.\\

\noindent\textbf{Basic idea.} Each subdomain contributes one coarse basis function that is locally constant
on that subdomain and smoothly blended with its neighbors through a partition
of unity. Together, these functions provide a global mechanism for propagating
constant (or low-frequency) information across the domain.

\begin{definition}[Nicolaides coarse space]
Let $R_i$ be the restriction operator to subdomain $i$ and let $D_i$ be a
diagonal partition-of-unity weight. For each subdomain $i$, define the coarse
basis vector
\begin{equation}
Z_i := R_i^T D_i R_i \mathbf{1},
\qquad i=1,\dots,N,
\label{eq:nico_basis}
\end{equation}
where $\mathbf{1}$ denotes the global all-ones vector.
The global coarse space is spanned by
\[
Z = [\,Z_1,\dots,Z_N\,].
\]
\end{definition}

Each vector $Z_i$ is supported on subdomain $i$ (including its overlap), \add{equals one on the non-overlapping core of $\Omega_i$, and decays smoothly to zero across the overlap region under the weighting by $D_i$.}
The construction relies on the discrete partition-of-unity identity which guarantees that these local constants combine into a globally consistent
correction.

\begin{figure}[H]
\centering
\begin{tikzpicture}[
  x=11cm, y=1.0cm,              % <--- makes [0,1] span 11cm (visible!)
  axis/.style={line width=0.9pt},
  dom/.style={line width=1.0pt},
  basis/.style={line width=1.6pt},
  lab/.style={font=\small},
  tick/.style={line width=0.8pt}
]

% -----------------------------
% Parameters in [0,1]
% -----------------------------
\def\a{0.25}
\def\b{0.40}
\def\c{0.55}
\def\H{1.8}
\def\core{0.10}

% -----------------------------
% Top: domain + subdomains
% -----------------------------
\draw[dom] (0,0) -- (1,0);
\node[lab, below] at (0,0) {$0$};
\node[lab, below] at (1,0) {$1$};

% ticks
\draw[tick] (\a,0) -- (\a,0.10);
\draw[tick] (\b,0) -- (\b,0.10);
\draw[tick] (\c,0) -- (\c,0.10);

\node[lab, below] at (\a,0) {$x_{i-1}$};
\node[lab, below] at (\b,0) {$x_i$};
\node[lab, below] at (\c,0) {$x_{i+1}$};

% overlap highlight
\draw[line width=7pt, opacity=0.10] (\a,0) -- (\b,0);
\draw[line width=7pt, opacity=0.10] (\b,0) -- (\c,0);

% Omega_i support indicator
\draw[<->, line width=0.9pt] (\a,-0.45) -- (\c,-0.45);
\node[lab] at ({0.5*(\a+\c)},-0.60) {$\Omega_i$};

% -----------------------------
% Bottom: basis function Z_i
% -----------------------------
\begin{scope}[shift={(0,-2.6)}]

% axes
\draw[axis] (0,0) -- (1,0);
\draw[axis] (0,0) -- (0,\H);
\node[lab, left] at (0,0.9*\H) {$Z_i$};

% guide lines
\draw[dashed] (\a,0) -- (\a,\H);
\draw[dashed] (\b,0) -- (\b,\H);
\draw[dashed] (\c,0) -- (\c,\H);

% basis: 0 -> ramp up -> plateau -> ramp down -> 0
\pgfmathsetmacro{\bl}{\b-0.5*\core}  % left edge of plateau
\pgfmathsetmacro{\br}{\b+0.5*\core}  % right edge of plateau
\draw[basis]
  (0,0)
  -- (\a,0)
  -- (\bl,\H)
  -- (\br,\H)
  -- (\c,0)
  -- (1,0);

\node[lab, align=center] at (0.73,1.12*\H)
{$Z_i = R_i^T D_i R_i \mathbf{1}$\\[-0.2em]
\small (partition-of-unity weighted constant)};

\end{scope}

\end{tikzpicture}
\caption{Schematic Nicolaides coarse basis function. The vector
$Z_i = R_i^T D_i R_i\mathbf{1}$ \add{equals one on the non-overlapping core of $\Omega_i$ and decays smoothly to zero across the overlap region under the partition-of-unity weights $D_i$.}}
\label{fig:nicolaides_basis}
\end{figure}

\noindent\textbf{Scalability guarantee.}
Although extremely low-dimensional (one basis vector per subdomain), the
Nicolaides coarse space is already sufficient to restore weak scalability for
homogeneous scalar diffusion problems.

\begin{theorem}[Widlund--Dryja bound, \cite{ToselliWidlund2005}]
For scalar diffusion problems with sufficiently regular coefficients, the
two-level additive Schwarz method using the Nicolaides coarse space satisfies
\[
\kappa(M_{\mathrm{ASM},2}^{-1}A)
\;\le\;
C\!\left(1+\frac{H}{\delta}\right),
\]
where $H$ denotes the subdomain diameter and $\delta$ the overlap width.
\end{theorem}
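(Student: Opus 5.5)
The plan is the abstract Schwarz framework. By Lemma~\ref{lem:cond_bound}, applied to the two-level preconditioner \eqref{eq:two_level_asm}, it suffices to bound $\lambda_{\max}$ and $\lambda_{\min}$ of $M_{\mathrm{ASM},2}^{-1}A=\Pi_0+\sum_{i}P_i$ separately. Here $\Pi_0=Z(Z^TAZ)^{-1}Z^TA$ and $P_i=R_i^TA_i^{-1}R_iA$ are $A$-orthogonal projectors, as in the proof of Lemma~\ref{lem:lmax_coloring}.

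For the upper bound, I reuse the colouring argument of Lemma~\ref{lem:lmax_coloring} verbatim for the local part, which gives $\langle \sum_i P_i x,x\rangle_A\le \mathcal N_c\langle x,x\rangle_A$. The coarse projector adds at most $1$. Hence $\lambda_{\max}\le \mathcal N_c+1$, and $\mathcal N_c$ depends only on the number of neighbours of each subdomain, not on $N$.

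For the lower bound, I use the standard stable-decomposition lemma (Lions' lemma). If every $x$ admits a splitting $x=R_0^Tx_0+\sum_i R_i^Tx_i$ with
\[
(A_0x_0,x_0)+\sum_i (A_ix_i,x_i)\le C_0^2\,(Ax,x),
\]
then $\lambda_{\min}\ge C_0^{-2}$. This follows by writing $\langle x,x\rangle_A=\sum_i\langle R_i^Tx_i,x\rangle_A$, observing that $\langle R_i^Tx_i,x\rangle_A=\langle R_i^Tx_i,P_ix\rangle_A$ since $P_i$ projects onto $\mathrm{range}(R_i^T)$, and applying Cauchy--Schwarz twice.

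For the decomposition, I pass to the finite element function $u_h$. I take the coarse component $u_0=\sum_i \bar u_i\,Z_i$, where $\bar u_i$ is the mean of $u_h$ over $\Omega_i$. The local components are $u_i=I_h(\chi_i(u_h-u_0))$, with $I_h$ the nodal interpolant and $\chi_i$ a smooth partition of unity with $|\nabla\chi_i|\lesssim 1/\delta$. The partition-of-unity identity \eqref{eq:discrete_partition_unity} ensures that these components sum to $u_h$.

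\textbf{Main obstacle.} The key step is estimating the local energies $|u_i|_{H^1}^2$. The product rule gives $|\chi_i w|_1^2\lesssim |w|_{1,\Omega_i}^2+\delta^{-2}\|w\|_{L^2(\Omega_i^\delta)}^2$, where $w=u_h-u_0$ and $\Omega_i^\delta$ is the strip of width $\delta$ near $\partial\Omega_i$. To control the strip term, I use the thin-strip inequality
\[
\|w\|_{L^2(\Omega_i^\delta)}^2\lesssim \delta^2|w|_{1,\Omega_i}^2+\delta H\,|w|_{1,\Omega_i}^2,
\]
combined with Poincaré's inequality on $\Omega_i$ and its neighbours. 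This is where the mean-value coarse functions and bounded overlap of neighbourhoods enter, giving $\|u_h-\bar u_j\|_{L^2(\Omega_i)}\lesssim H|u_h|_{1,\widetilde\Omega_i}$. The coarse energy $|u_0|_1^2$ is bounded by the same Poincaré argument, since $\nabla u_0=\sum_i(\bar u_i-\bar u_j)\nabla Z_i$ locally and $|\nabla Z_i|\lesssim 1/\delta$. This yields $C_0^2\lesssim 1+H/\delta$.

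Summing over $i$ with finite overlap gives $\lambda_{\min}\gtrsim (1+H/\delta)^{-1}$. Combined with the upper bound, this gives $\kappa\le C(1+H/\delta)$, with $C$ independent of $N$ and $h$. It depends only on $\mathcal N_c$, shape regularity, and the coefficient bounds, which is where the regularity assumption on the coefficients is used.
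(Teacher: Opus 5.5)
The paper gives no proof of this statement; it only cites Toselli--Widlund. Your architecture is the standard one: colouring for $\lambda_{\max}\le\mathcal N_c+1$, Lions' lemma for $\lambda_{\min}$, and the splitting $u_0=\sum_i\bar u_iZ_i$, $u_i=I_h(\chi_i(u_h-u_0))$. The decisive gap is the claim $|\nabla Z_i|\lesssim 1/\delta$, which does not follow from $Z_i=R_i^TD_iR_i\mathbf 1$. For the multiplicity or Boolean weights the paper itself proposes, the finite element function $Z_i$ changes by $O(1)$ across a single element at the edges of the overlap. So $|\nabla Z_i|\sim 1/h$, and the coarse energy of $\sum_i\bar u_iZ_i$ is of order $(H/h)|u_h|_1^2$.

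This is not an artefact of your estimate. Take 1D, multiplicity weights, an overlap of several mesh layers and $u_h\approx\sin\pi x$. A short computation indicates that every admissible splitting costs at least $c\min\{H/h,\,1/(H\delta)\}\,|u_h|_1^2$. With $\delta/H$ fixed and $h\to0$, the condition number is then at least of order $H^{-2}$, and the bound $C(1+H/\delta)$ fails. You therefore need an explicit hypothesis: the $D_i$ are the nodal values of a partition of unity $\chi_i$ with $\|\nabla\chi_i\|_\infty\lesssim 1/\delta$ and $\nabla\chi_i$ supported in a layer of width $O(\delta)$. Use these same $\chi_i$ in the local splitting; this is also what makes $\sum_i I_h(\chi_i w)=w$ follow from the discrete identity. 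Similarly, for subdomains touching the Dirichlet boundary, $\sum_i Z_i$ drops to $0$ within one element at $\partial\Omega$. There the coarse coefficient must be $0$ (with a Friedrichs inequality), not $\bar u_i$.

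Second, the strip estimate as written only yields $(1+H/\delta)^2$. Your thin-strip inequality lacks the term $\frac{\delta}{H}\|w\|^2_{L^2(\Omega_i)}$; without it, it is false for nonzero constants. More seriously, you apply it to $w=u_h-u_0$, whose seminorm contains $|u_0|^2_{1,\Omega_i}$, which can be as large as $(H/\delta)|u_h|^2_{1,\widetilde\Omega_i}$. Then $\delta^{-2}\|w\|^2_{L^2(\Omega_i^\delta)}\lesssim(1+H/\delta)|w|^2_{1,\Omega_i}$ produces a squared factor.

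The remedy is never to feed $\nabla u_0$ into the strip lemma. On $\Omega_i$, write $w=\sum_j Z_j(u_h-\bar u_j)$ over the neighbours $j$ of $i$, with $0\le Z_j\le 1$, so that $|w|\le\sum_j|u_h-\bar u_j|$ pointwise. Then apply the thin-strip lemma and Poincar\'e on $\Omega_i\cup\Omega_j$ to each $u_h-\bar u_j$, whose gradient is $\nabla u_h$. This gives $\delta^{-2}\|w\|^2_{L^2(\Omega_i^\delta)}\lesssim(1+H/\delta)|u_h|^2_{1,\widetilde\Omega_i}$. The coarse seminorm then enters only once, through $\|\chi_i\nabla w\|^2_{L^2}$, and you obtain $C_0^2\lesssim 1+H/\delta$ as claimed.
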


In particular, the condition number is independent of the number of subdomains,
provided that the overlap is chosen proportionally to the subdomain size.The practical impact of the Nicolaides coarse space is illustrated in
Table~\ref{tab:nico_iters}. While the iteration count of the one-level method
grows rapidly with the number of subdomains, the two-level method exhibits
near-constant convergence behavior.

\begin{table}[!t]
\caption{\add{Iteration counts for the two-level additive Schwarz method with and without the Nicolaides coarse correction.}}
\label{tab:nico_iters}
\centering
\begin{tabular}{lcccc}
\hline\noalign{\smallskip}
\# subdomains & 8 & 16 & 32 & 64\\
\noalign{\smallskip}\svhline\noalign{\smallskip}
ASM (one-level) & 18 & 35 & 66 & 128\\
ASM + Nicolaides & 20 & 27 & 28 & 27\\
\noalign{\smallskip}\hline
\end{tabular}
\end{table}

\begin{important}{Limitation of the Nicolaides coarse space}
The Nicolaides space is not robust for problems with strong coefficient
heterogeneity or pronounced multiscale features.
In such cases, constant modes are no longer sufficient to represent the
relevant global error components.
Robustness then requires \emph{adaptive} coarse spaces, typically constructed
from local spectral problems (e.g.\ GenEO).
\end{important}

\subsection{Heterogeneous problems: the GenEO coarse space}
\label{sec:coarse_spaces_heterogeneous}

Many applications involve strong coefficient heterogeneity, which severely
deteriorates the conditioning of the discrete operator and renders simple
coarse spaces ineffective.
A prototypical example is the Darcy pressure equation discretized with
$P^1$ finite elements,
\begin{equation}
A u = f,
\qquad
\operatorname{cond}(A)\ \sim\ \frac{\alpha_{\max}}{\alpha_{\min}}\, h^{-2},
\label{eq:darcy_cond}
\end{equation}
where $\alpha(x)$ denotes the permeability coefficient.
The condition number grows both under mesh refinement ($h\to 0$) and with
increasing coefficient contrast $\alpha_{\max}/\alpha_{\min}$.

Such behavior motivates the design of coarse spaces \cite{Nataf2011} that adapt to the coefficient distribution and remain effective in the presence of arbitrarily
large jumps.

Typical application domains include porous media flow with layered or
stochastic permeability, elasticity with stiff inclusions, and wave propagation
in composite materials.

\begin{figure}[H]
\centering
\begin{minipage}{0.46\textwidth}
  \centering
  \includegraphics[width=\linewidth]{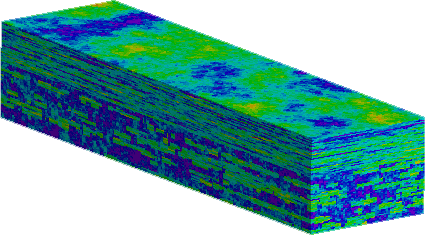}
\end{minipage}\hfill
\begin{minipage}{0.22\textwidth}
  \centering
  \includegraphics[width=\linewidth]{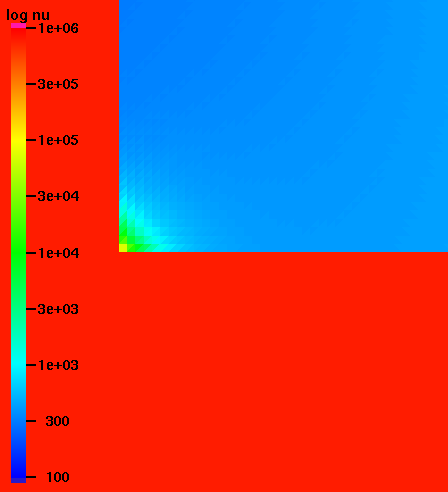}
\end{minipage}\hfill
\begin{minipage}{0.22\textwidth}
  \centering
  \includegraphics[width=\linewidth]{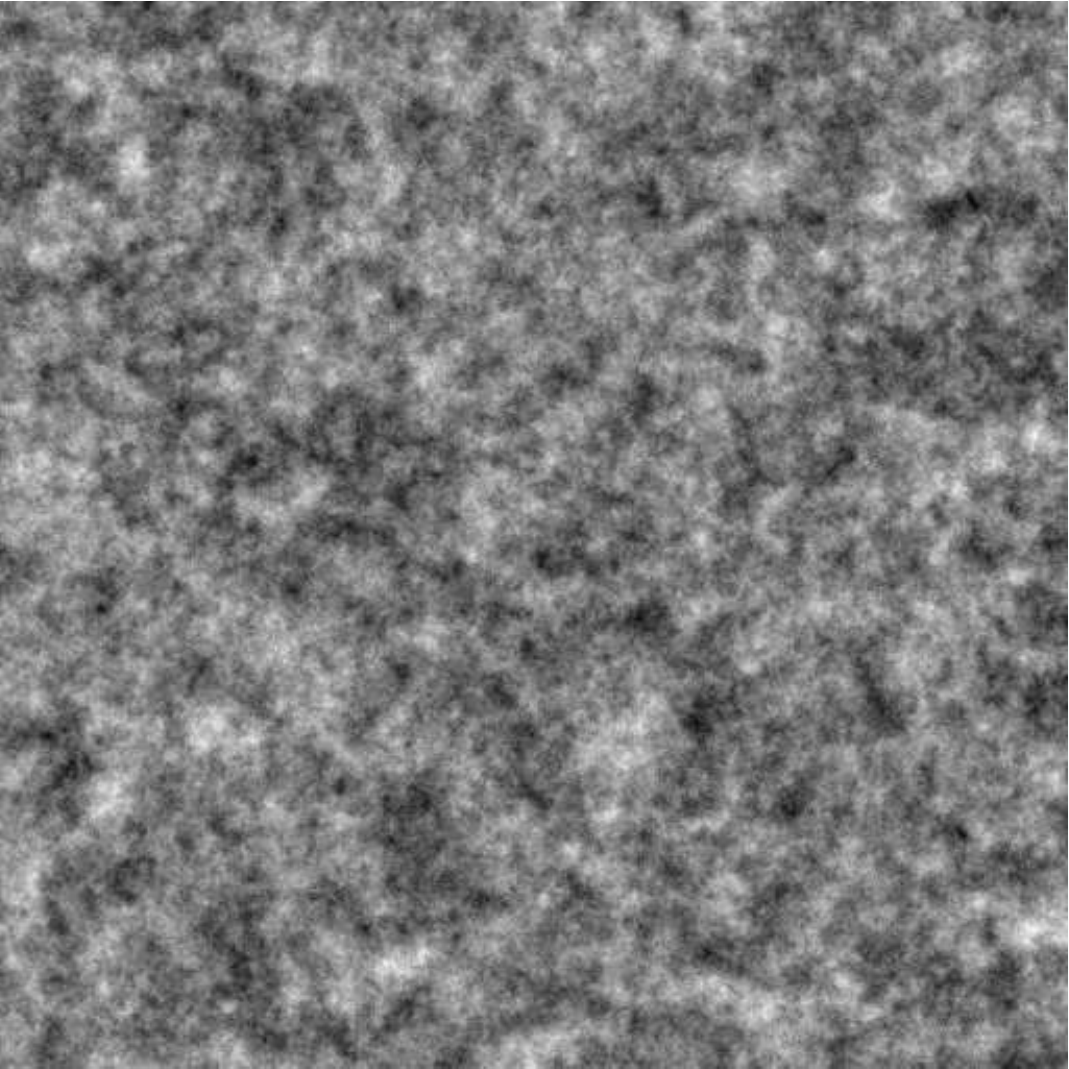}
\end{minipage}
\caption{Examples of heterogeneous coefficients that challenge classical coarse spaces.}
\label{fig:hetero_examples}
\end{figure}

GenEO (Generalized Eigenproblems in the Overlap) \cite{Spillane2014} constructs a coarse space by
identifying, on each subdomain, error components that are poorly reduced by
local solves.
These components are detected by solving local generalized eigenproblems and
selecting the corresponding low-energy modes.\\

\noindent \textbf{Coarse space construction} For each subdomain $j$, GenEO considers the generalized eigenproblem
\begin{equation}
A^{\mathrm{Neu}}_j\,\phi_{j,k}
=
\lambda_{j,k}\,
\bigl(D_j R_j A R_j^T D_j\bigr)\,\phi_{j,k},
\label{eq:geneo_local_eig}
\end{equation}
where $A^{\mathrm{Neu}}_j$ denotes the Neumann operator on subdomain $j$,
$R_j$ is the restriction operator, and $D_j$ is a partition-of-unity weight. The eigenvectors $\phi_{j,k}$ corresponding to small eigenvalues represent
local modes that are weakly controlled by one-level Schwarz methods.\\

\noindent \textbf{Mode selection} Given a threshold $\tau>0$, the GenEO coarse space is defined by selecting all
eigenvectors satisfying
\begin{equation}
\lambda_{j,k}\le \tau
\quad\Longrightarrow\quad
R_j^T D_j \phi_{j,k}\ \in\ Z_{\mathrm{GenEO}}.
\label{eq:geneo_selection}
\end{equation}
The threshold $\tau$ controls the balance between robustness and coarse space
dimension. In particular, the choice $\tau=0$ recovers constant-type modes,
leading to a Nicolaides-like coarse space.\\

The GenEO construction yields a provably robust two-level preconditioner.

\begin{theorem}[Spillane et al., 2014, \cite{Spillane2014}]
Under standard assumptions on the overlap and subdomain partition, the
two-level additive Schwarz method augmented with the GenEO coarse space satisfies
\[
\kappa(M_{\mathrm{ASM},2}^{-1}A)
\ \le\
(1+k_0)\Bigl[\,2+k_0(2k_0+1)\bigl(1+1/\tau\bigr)\Bigr],
\]
where $k_0$ denotes the maximal overlap multiplicity.
\end{theorem}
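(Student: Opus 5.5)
We will prove the bound with the abstract Schwarz framework, splitting it into an upper and a lower eigenvalue estimate for the $A$-self-adjoint operator $M_{\mathrm{ASM},2}^{-1}A=\Pi_0+\sum_{i=1}^N P_i$. Here $\Pi_0=R_0^TA_0^{-1}R_0A$ is the $A$-orthogonal projector onto $\mathrm{range}(Z_{\mathrm{GenEO}})$, and the $P_i$ are the local projectors from the proof of Lemma~\ref{lem:lmax_coloring}.

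\textbf{Upper bound.} We use the colouring argument of Lemma~\ref{lem:lmax_coloring}. If every degree of freedom lies in at most $k_0$ subdomains, the subdomain adjacency graph can be coloured with at most $k_0$ colours. So $\sum_i P_i$ is a sum of at most $k_0$ $A$-orthogonal projectors. Adding the coarse projector $\Pi_0$ gives
\[
\lambda_{\max}(M_{\mathrm{ASM},2}^{-1}A)\le 1+k_0 .
\]
This accounts for the prefactor $(1+k_0)$.

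\textbf{Lower bound.} We use the stable decomposition lemma (Lions' lemma): if every $\mathbf{U}$ can be written $\mathbf{U}=R_0^T\mathbf{U}_0+\sum_i R_i^T\mathbf{U}_i$ with
\[
(A_0\mathbf{U}_0,\mathbf{U}_0)+\sum_i(A_i\mathbf{U}_i,\mathbf{U}_i)\le C_0^2\,(A\mathbf{U},\mathbf{U}),
\]
then $\lambda_{\min}\ge C_0^{-2}$. We build the decomposition from the partition of unity. First restrict $\mathbf{U}$ to each subdomain and split $R_j\mathbf{U}=\Pi_j R_j\mathbf{U}+(I-\Pi_j)R_j\mathbf{U}$, where $\Pi_j$ is the $A^{\mathrm{Neu}}_j$-orthogonal projector onto the selected eigenvectors $\{\phi_{j,k}:\lambda_{j,k}\le\tau\}$. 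The coarse component is $R_0^T\mathbf{U}_0:=\sum_j R_j^TD_j\Pi_jR_j\mathbf{U}$, and the local components are $\mathbf{U}_j:=D_j(I-\Pi_j)R_j\mathbf{U}$. By $\sum_jR_j^TD_jR_j=I$ these components sum to $\mathbf{U}$.

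The local terms are where the eigenproblem \eqref{eq:geneo_local_eig} enters. Since $(I-\Pi_j)R_j\mathbf{U}$ lies in the span of eigenvectors with $\lambda_{j,k}>\tau$, we get
\[
(R_jAR_j^TD_j(I-\Pi_j)R_j\mathbf{U},\,D_j(I-\Pi_j)R_j\mathbf{U})\le \tau^{-1}\,(A^{\mathrm{Neu}}_j(I-\Pi_j)R_j\mathbf{U},(I-\Pi_j)R_j\mathbf{U}),
\]
and the right-hand side is at most $\tau^{-1}(A^{\mathrm{Neu}}_jR_j\mathbf{U},R_j\mathbf{U})$ because $\Pi_j$ is an $A^{\mathrm{Neu}}_j$-orthogonal projector. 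Summing over $j$ and using the Neumann-matrix overlap property $\sum_j(A^{\mathrm{Neu}}_jR_j\mathbf{U},R_j\mathbf{U})\le k_1(A\mathbf{U},\mathbf{U})$ (part of the standard assumptions, with $k_1\le k_0$) bounds the local energy by $k_0\tau^{-1}(A\mathbf{U},\mathbf{U})$.

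The coarse term is the main obstacle. Write $R_0^T\mathbf{U}_0=\mathbf{U}-\sum_jR_j^T\mathbf{U}_j$. Then
\[
\|R_0^T\mathbf{U}_0\|_A^2\le 2\|\mathbf{U}\|_A^2+2\Bigl\|\sum_jR_j^T\mathbf{U}_j\Bigr\|_A^2 .
\]
Estimate the second norm with the colouring/Cauchy--Schwarz bound $\bigl\|\sum_jR_j^T\mathbf{U}_j\bigr\|_A^2\le k_0\sum_j\|R_j^T\mathbf{U}_j\|_A^2$, and then insert the local estimate above. Collecting terms gives
\[
C_0^2\le 2+k_0(2k_0+1)(1+1/\tau),
\]
where the $(1+1/\tau)$ factor arises from absorbing the energies of both the $\Pi_j$ and $(I-\Pi_j)$ parts. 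The delicate bookkeeping is getting exactly the coefficient $k_0(2k_0+1)$, which needs a careful double count of overlaps (multiplicity $k_0$ appearing once from colouring and once from $k_1$). Finally, Lemma~\ref{lem:cond_bound} combines the two eigenvalue bounds into the stated condition number estimate.
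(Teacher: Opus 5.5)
The paper gives no proof of this theorem; it only quotes it from \cite{Spillane2014}. Your architecture is the standard one behind that result: an upper bound $1+k_0$ from locality plus the coarse projector, and a lower bound from the splitting $\mathbf{U}=\sum_jR_j^TD_j\Pi_jR_j\mathbf{U}+\sum_jR_j^TD_j(I-\Pi_j)R_j\mathbf{U}$ fed into Lions' lemma. The lower-bound argument works, modulo a technical point below.

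The upper bound, however, rests on a false claim: a multiplicity bound $k_0$ does not give a colouring with $k_0$ colours. Cut an annulus into three angular sectors with small pairwise overlaps. No point lies in all three, so $k_0=2$, but the sectors are pairwise adjacent and need three colours. The colouring of Lemma~\ref{lem:lmax_coloring} is even more demanding, since $R_lAR_k^T=0$ forces distinct colours on subdomains that are coupled through $A$ without sharing any unknown. As written you therefore only get $\lambda_{\max}\le 1+\mathcal{N}_c$.

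The repair is the inequality you already use in the coarse estimate, justified element by element rather than by colouring. Assume $A$ is assembled from positive semidefinite element matrices and $R_i^T\mathbf{V}_i$ vanishes on elements not contained in $\Omega_i$ (part of the standard assumptions). Then at most $k_0$ summands are nonzero on each element, and Cauchy--Schwarz gives $\bigl\|\sum_iR_i^T\mathbf{V}_i\bigr\|_A^2\le k_0\sum_i\|R_i^T\mathbf{V}_i\|_A^2$. Applied to $R_i^T\mathbf{V}_i=P_i\mathbf{U}$, this yields $\sum_i\|P_i\mathbf{U}\|_A^2=\bigl\langle\sum_iP_i\mathbf{U},\mathbf{U}\bigr\rangle_A\le\sqrt{k_0}\,\bigl(\sum_i\|P_i\mathbf{U}\|_A^2\bigr)^{1/2}\|\mathbf{U}\|_A$. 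Hence $\lambda_{\max}\bigl(\sum_iP_i\bigr)\le k_0$, and adding $\Pi_0$ gives $1+k_0$.

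For the constant, no delicate double count is needed, and your account of the factor $(1+1/\tau)$ is not what happens. Collecting your own estimates gives
\[
\|R_0^T\mathbf{U}_0\|_A^2+\sum_j\|R_j^T\mathbf{U}_j\|_A^2
\le\Bigl(2+2k_0\cdot\frac{k_0}{\tau}+\frac{k_0}{\tau}\Bigr)\|\mathbf{U}\|_A^2
=\Bigl(2+\frac{k_0(2k_0+1)}{\tau}\Bigr)\|\mathbf{U}\|_A^2 .
\]
This is sharper than required, so the stated bound follows from $1/\tau\le 1+1/\tau$. The extra $1$ in \cite{Spillane2014} comes from their eigenproblem, whose right-hand side is restricted to the overlap. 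On the part of $\Omega_j$ where the partition of unity equals one, the energy of the weighted function is bounded directly by the Neumann energy, not through $\tau$. With the full matrix $D_jR_jAR_j^TD_j$ of \eqref{eq:geneo_local_eig}, this term does not appear.

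Finally, take $\Pi_j$ to be the spectral projector onto the selected eigenvectors along the non-selected ones, for an eigenbasis orthogonal in both forms, rather than an arbitrary $A^{\mathrm{Neu}}_j$-orthogonal projector. For floating subdomains $A^{\mathrm{Neu}}_j$ is singular, so such a projector is only determined up to $\ker A^{\mathrm{Neu}}_j$. A kernel component left in $(I-\Pi_j)R_j\mathbf{U}$ would have $D_jR_jAR_j^TD_j$-energy not controlled by $\tau$.
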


A practical and robust choice of the spectral threshold is
\[
\tau := \left(\max_j \frac{H_j}{\delta_j}\right)^{-1},
\]
where $H_j$ and $\delta_j$ denote the diameter and overlap width of subdomain $j$,
respectively.
With this choice, the condition number bound becomes independent of coefficient
contrast and mesh size.

\begin{important}{Nicolaides vs. GenEO}
In summary, the Nicolaides coarse space provides a minimal global correction
sufficient for homogeneous scalar problems, while GenEO generalizes this idea by
selecting, through local spectral problems, the coefficient-dependent modes that
govern robustness in heterogeneous and nearly singular settings.
\end{important}

%-----------------------------------------------------------------
We illustrate the performance of GenEO on elliptic problems with strong
heterogeneity and near-null modes, focusing on robustness with respect to
coefficient contrast, domain decomposition, and coarse space size. We consider first a Darcy problem with permeability contrast \(
1 \le \alpha(x) \le 1.5\times 10^6, \)
a regime in which one-level and classical coarse spaces typically fail.
Two domain decompositions are used: a regular Cartesian partition and a
graph-based partition obtained with METIS.

Figure~\ref{fig:darcy_hetero_geneo} shows the heterogeneous coefficient field,
the corresponding pressure solution, and the two decompositions.
The results demonstrate that GenEO remains robust independently of both
coefficient variations and the choice of partitioning.

\begin{figure}[H]
\centering
\begin{minipage}{0.24\textwidth}
  \centering
  \includegraphics[width=\linewidth]{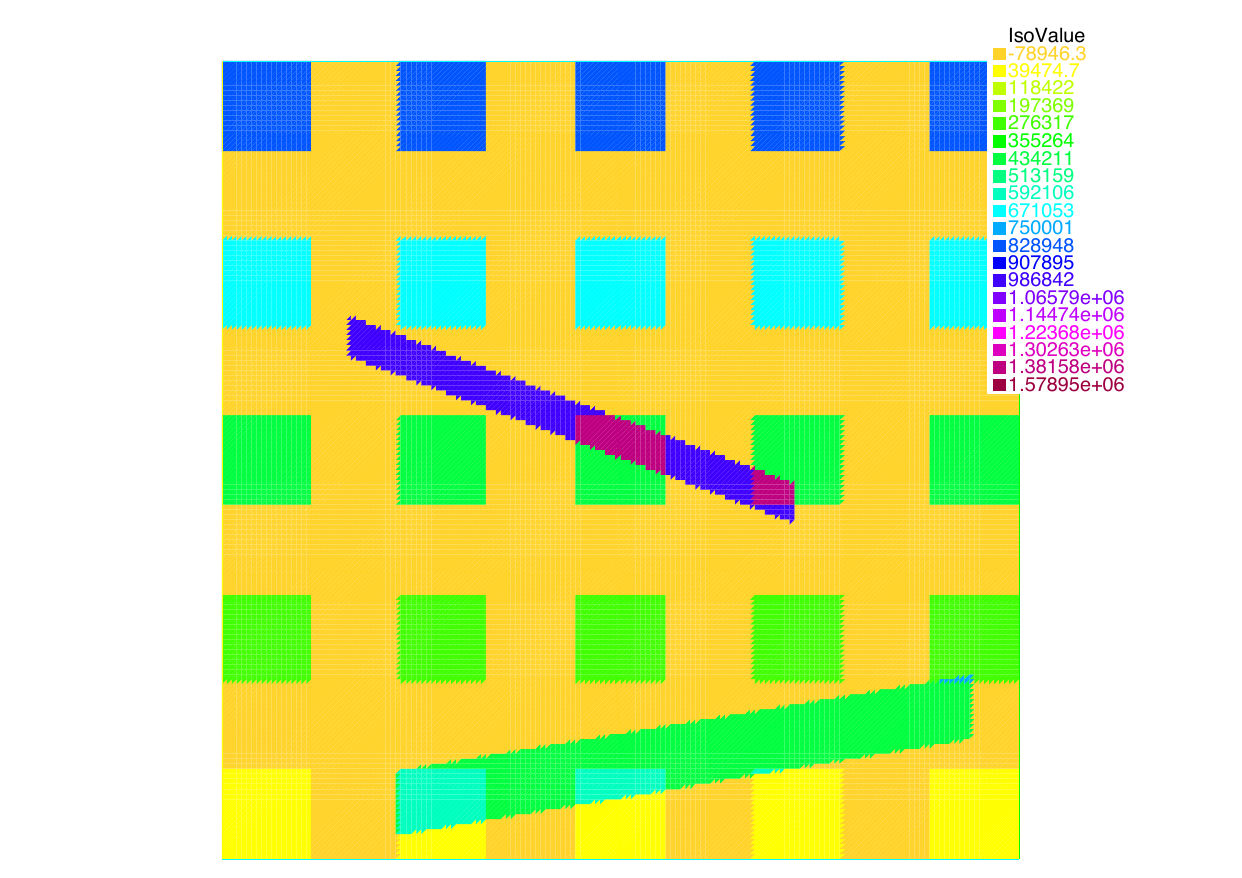}
\end{minipage}\hfill
\begin{minipage}{0.24\textwidth}
  \centering
  \includegraphics[width=\linewidth]{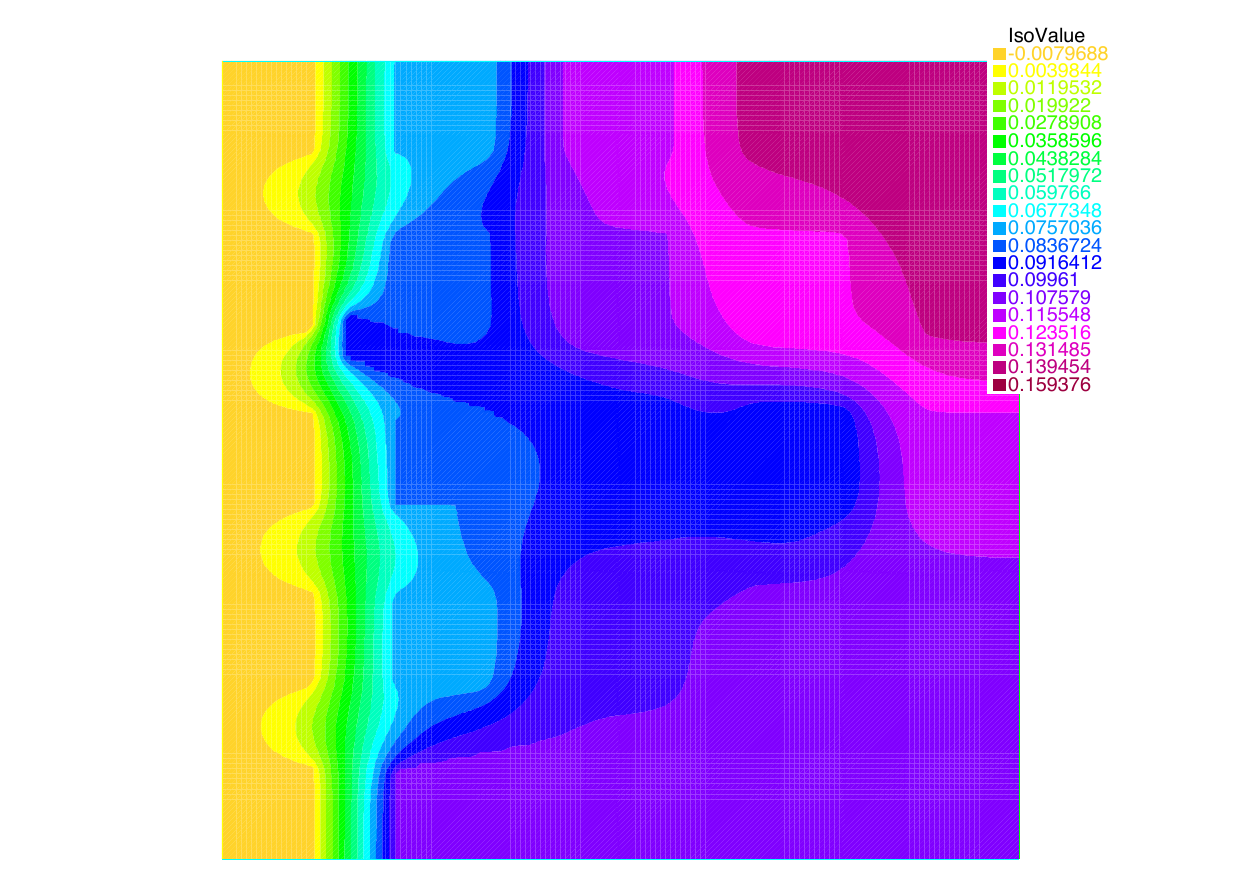}
\end{minipage}\hfill
\begin{minipage}{0.24\textwidth}
  \centering
  \includegraphics[width=\linewidth]{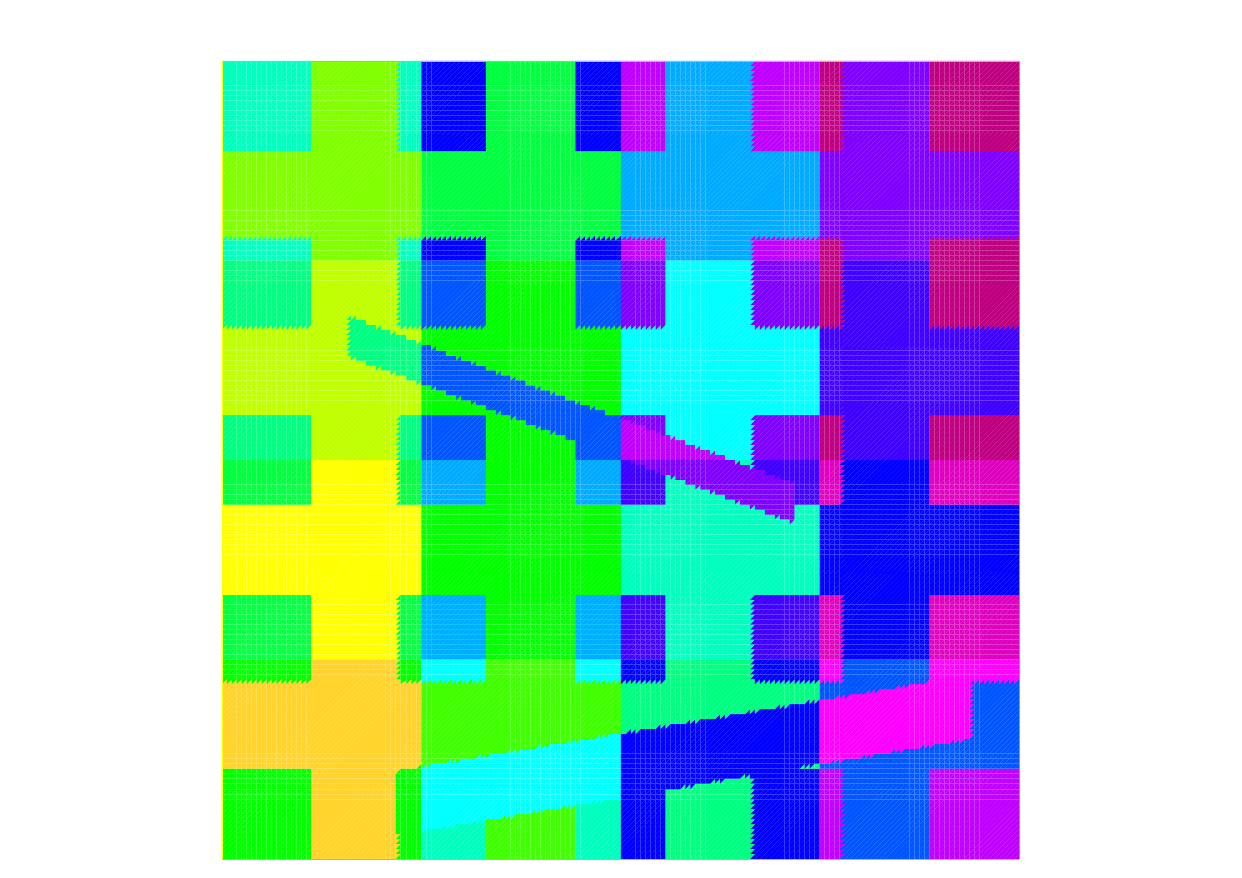}
\end{minipage}\hfill
\begin{minipage}{0.24\textwidth}
  \centering
  \includegraphics[width=\linewidth]{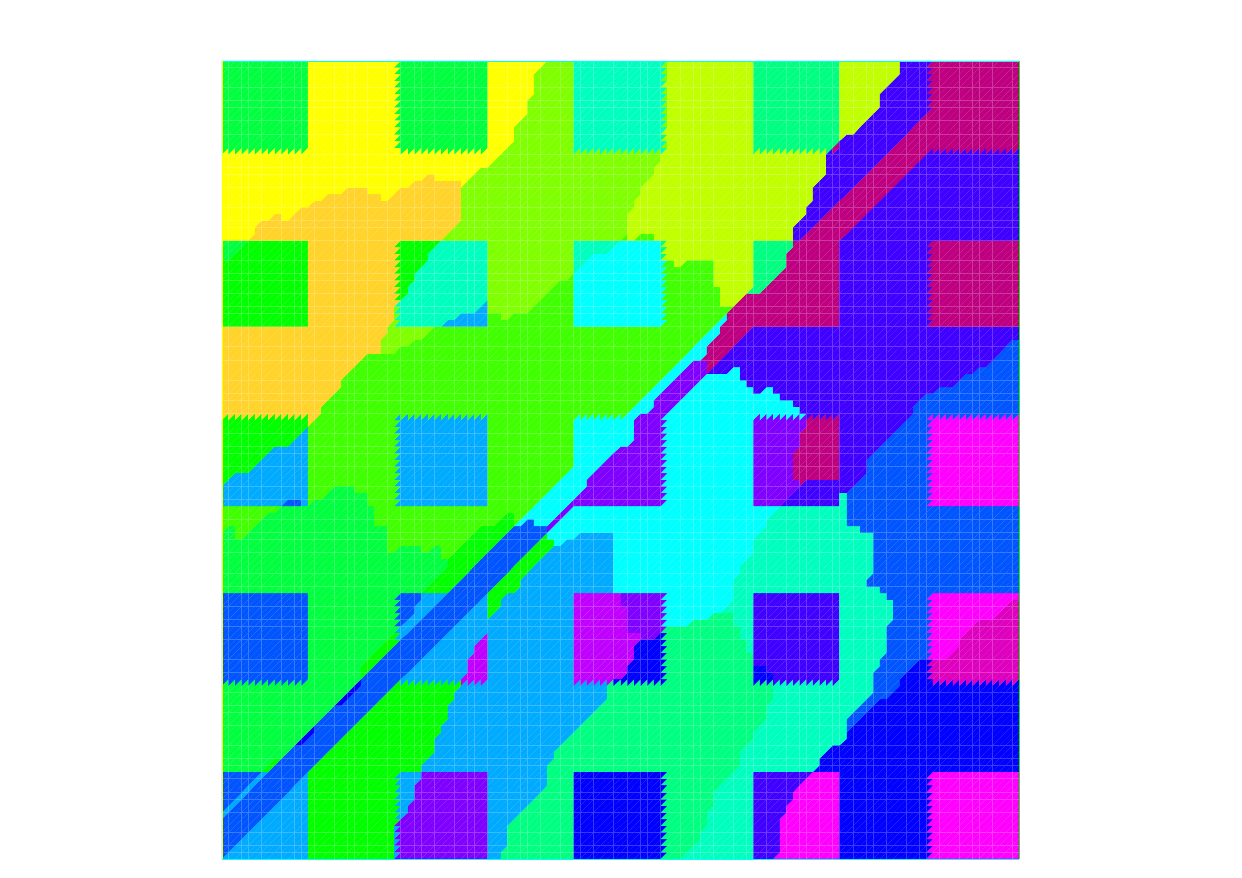}
\end{minipage}
\caption{Darcy problem with strong heterogeneity: permeability field, solution,
and uniform versus METIS-based domain decompositions.}
\label{fig:darcy_hetero_geneo}
\end{figure}

\add{Second, elasticity with strongly heterogeneous coefficients and near-null modes (e.g.\ weakly constrained inclusions, layered media) severely slows down classical Schwarz methods. GenEO, applied to the primal formulation, automatically detects these modes and incorporates them into the coarse space, leading to fast convergence that is essentially independent of the heterogeneity contrast. The genuinely \emph{nearly incompressible} regime ($\nu \to 0.5$) requires in addition a mixed formulation and a dedicated treatment of the pressure block; this saddle-point setting is discussed separately in Section~\ref{sec:elast_NumericalResults}.}

\begin{figure}[h!]
\centering
\begin{minipage}[c]{0.75\textwidth}
\includegraphics[width=0.8\linewidth]{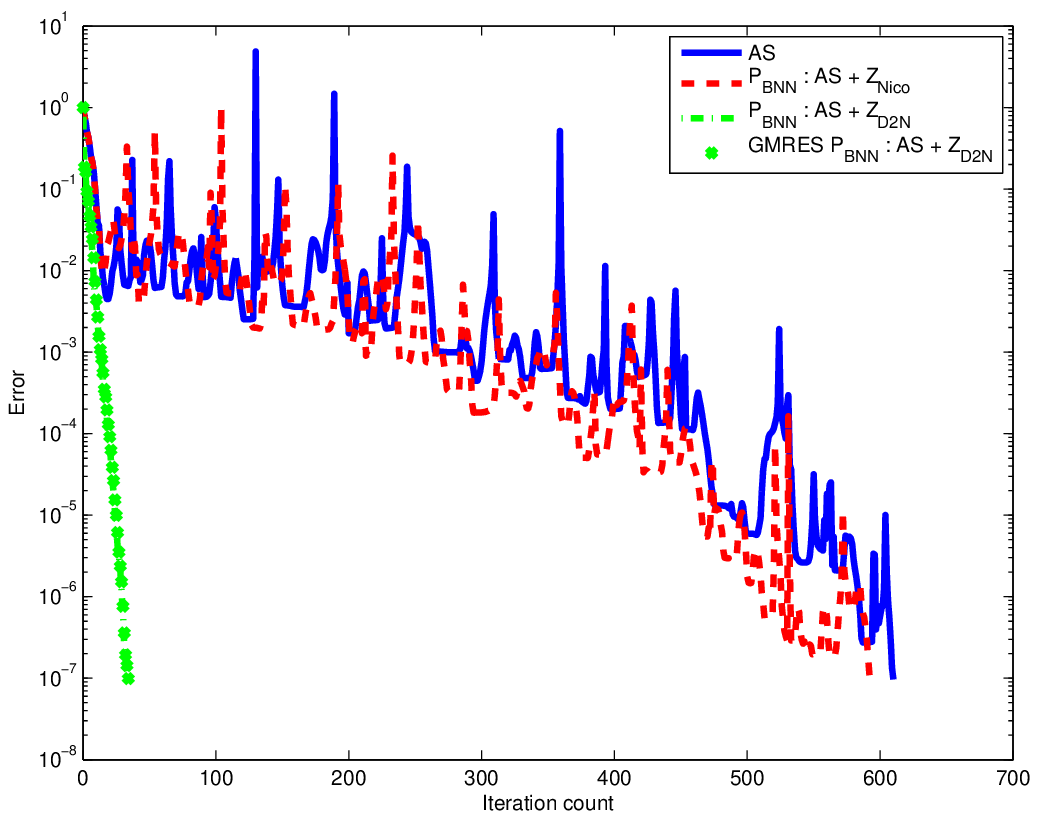}
\end{minipage}
\begin{minipage}[c]{0.2\textwidth}
\caption{\add{GMRES convergence in heterogeneous (not nearly incompressible) elasticity.} GenEO restores rapid convergence compared to ASM and Nicolaides coarse spaces.}
\label{fig:elasticity_convergence_geneo}
\end{minipage}
\end{figure}

\noindent\textbf{Quantitative Comparison.} The following table compares the number of eigenvectors per subdomain selected under three configurations:
\begin{itemize}
  \item No coarse space (pure ASM)
  \item Nicolaides (1 vector per subdomain)
  \item GenEO (adaptive spectral selection)
\end{itemize}
We notice there is an optimal (empirical) value for the number of vectors beyond which there is no further gain in iteration count.
\begin{table}[h!]
\centering
\begin{tabular}{|c|c|c|c|}
\hline
Coarse Space Type & ASM & ASM + $Z_{\text{Nico}}$ & ASM + $Z_{\text{GenEO}}$ \\
\hline
$\max(m_i - 1, 1)$ & – & – & 273 \\
$m_i$ (average)     & 614 & 543 & \textbf{36} \\
$m_i + 1$           & – & – & 32 \\
\hline
\end{tabular}
\caption{Number of local coarse basis functions $m_i$ based on eigenvalue threshold.}
\end{table}

This reduction in iteration count highlights GenEO’s ability to represent difficult modes efficiently thus leading to a better convergence.\\

\noindent\textbf{Spectral Distribution.} The spectrum confirms that only a few modes dominate. These are precisely the ones retained by GenEO. The rest decay rapidly and do not affect convergence significantly.

\begin{figure}[H]
\centering
\begin{minipage}[c]{0.75\textwidth}
\includegraphics[width=0.8\linewidth]{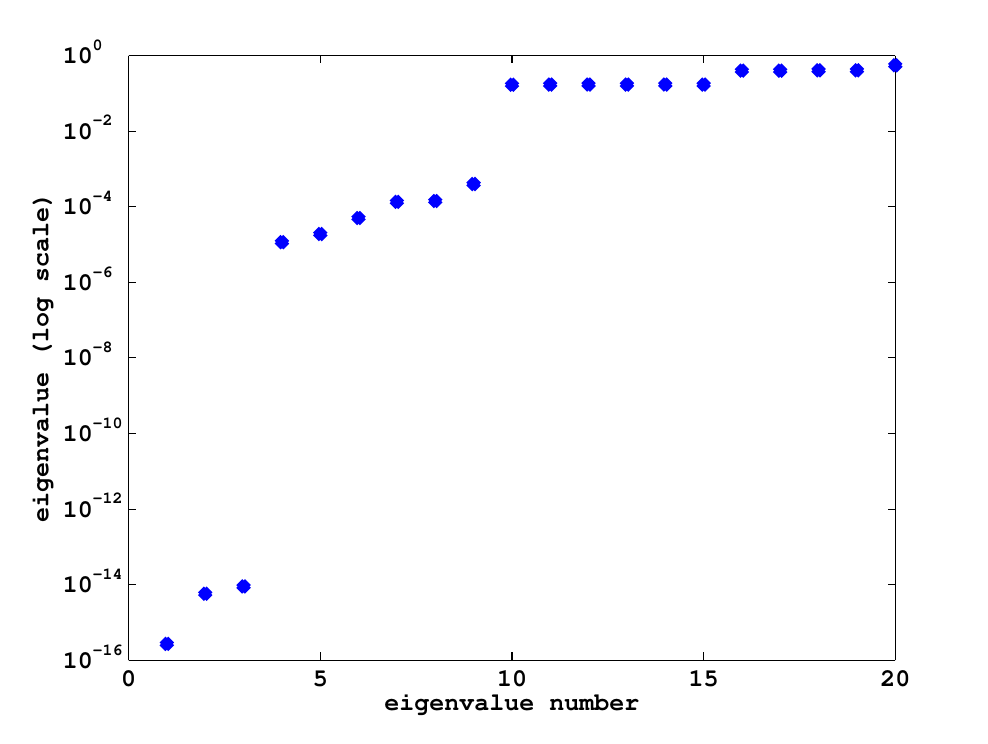}
\end{minipage}
\begin{minipage}[c]{0.2\textwidth}
\caption{Typical local GenEO spectrum (log scale): only a few low-energy modes
are selected for the coarse space.}
\label{fig:geneo_spectrum}
\end{minipage}
\end{figure}

These experiments confirm that GenEO achieves robustness with respect to
heterogeneity, near-null modes, and partitioning, while keeping the coarse
space dimension small.

\subsection{The fictitious space lemma: an abstract framework}
\label{subsec:fsl}

The analysis of Schwarz-type preconditioners can be cast in a unified form by introducing auxiliary (``fictitious'') spaces.
This abstract viewpoint isolates the spectral properties of the
preconditioner from the details of the discretization \add{and provides a template that, in its standard symmetric form, directly covers additive Schwarz (ASM) and its symmetrised variants (e.g.\ SORAS). The non-symmetric variants RAS and ORAS are \emph{not} covered by the symmetric form of the lemma; analyses of these methods typically require additional ingredients (see Remark~\ref{rem:fsl_ras_oras} below).}\\

\noindent\textbf{Global and product spaces.}
We distinguish between the global space associated with the discretized
problem and an auxiliary product space collecting local subdomain variables.
Let
\(
\mathcal{H}_0 := \mathbb{R}^{\#\mathcal{N}}
\)
denote the global finite-dimensional space, equipped with the bilinear form
\[
a(\mathbf{U},\mathbf{V}) := \mathbf{V}^T A \mathbf{U},
\]
where $A$ is the global system matrix.
The fictitious (product) space is defined as
\[
\mathcal{H}_P := \prod_{i=1}^N \mathbb{R}^{\#\mathcal{N}_i},
\]
with one local vector per subdomain.
On $\mathcal{H}_P$ we introduce the block-diagonal bilinear form
\[
b(\mathcal{U},\mathcal{V})
:=
\sum_{i=1}^N \mathbf{V}_i^T A_i \mathbf{U}_i,
\quad
A_i := R_i A R_i^T.
\]

\noindent\textbf{Assembly operator.}
The connection between the fictitious and global spaces is provided by the
(additive) assembly operator
\[
\mathcal{R}_{\mathrm{ASM}} : \mathcal{H}_P \to \mathcal{H}_0,
\qquad
\mathcal{R}_{\mathrm{ASM}}(\mathcal{U}) := \sum_{i=1}^N R_i^T \mathbf{U}_i.
\]

With this notation, the one-level additive Schwarz preconditioner can be written
in the abstract form
\[
M_{\mathrm{ASM}}^{-1}
=
\mathcal{R}_{\mathrm{ASM}}\, B^{-1}\, \mathcal{R}_{\mathrm{ASM}}^{*},
\]
where $B$ is the block-diagonal operator associated with $b(\cdot,\cdot)$. The following result, often referred to as the fictitious space lemma,
provides spectral characterisation of the preconditioners of  the form $M^{-1}=\mathcal{R} B^{-1}\mathcal{R}^*$.

\begin{lemma}[Fictitious space lemma]
\label{lem:fsl}
Assume that the operator $\mathcal{R}:\mathcal{H}_P\to\mathcal{H}_0$ satisfies:
\begin{itemize}
\item[(i)] {Surjectivity:}
for every $u\in\mathcal{H}_0$ there exists $u_P\in\mathcal{H}_P$ such that
$\mathcal{R}u_P=u$;
\item[(ii)] $\,$ {Continuity:}
there exists $c_R>0$ such that
\[
a(\mathcal{R}u_P,\mathcal{R}u_P)
\le c_R\, b(u_P,u_P)
\qquad \forall u_P\in\mathcal{H}_P;
\]
\item[(iii)] $\,$ {Stable decomposition:}
there exists $c_T>0$ such that for every $u\in\mathcal{H}_0$ one can find
$u_P\in\mathcal{H}_P$ with $\mathcal{R}u_P=u$ and
\[
c_T\, b(u_P,u_P) \le a(u,u).
\]
\end{itemize}
Then the spectrum of the preconditioned operator satisfies
\[
\mathrm{spec}(M^{-1}A)\subset [\,c_T,\; c_R\,].
\]
\end{lemma}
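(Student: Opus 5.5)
Throughout, the adjoint $\mathcal{R}^*:\mathcal{H}_0\to\mathcal{H}_P$ is taken with respect to the Euclidean inner products on both spaces. Then $M^{-1}=\mathcal{R}B^{-1}\mathcal{R}^*$ is symmetric positive semidefinite, and surjectivity (i) makes it SPD: $\mathcal{R}^*$ is injective, so $(M^{-1}\mathbf{x},\mathbf{x})=(B^{-1}\mathcal{R}^*\mathbf{x},\mathcal{R}^*\mathbf{x})>0$ for $\mathbf{x}\neq 0$. Consequently $M^{-1}A$ is $A$-self-adjoint with real positive eigenvalues. It is also similar to $A^{1/2}M^{-1}A^{1/2}$, so its spectrum equals that of $M^{-1}A$ viewed through Rayleigh quotients. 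The plan is to show
\[
c_T\,(A^{-1}\mathbf{w},\mathbf{w})\le (M^{-1}\mathbf{w},\mathbf{w})\le c_R\,(A^{-1}\mathbf{w},\mathbf{w})
\]
for all $\mathbf{w}$, and then conclude by the same Rayleigh-quotient argument used in the proof of Lemma~\ref{lem:lmax_coloring} (equivalently, by Lemma~\ref{lem:cond_bound} applied with $M$ and $A$).

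\textbf{Upper bound.} For a given $\mathbf{w}$, set $u_P^\star:=B^{-1}\mathcal{R}^*\mathbf{w}$, so that $(M^{-1}\mathbf{w},\mathbf{w})=b(u_P^\star,u_P^\star)$. Then
\[
b(u_P^\star,u_P^\star)=(\mathcal{R}^*\mathbf{w},u_P^\star)=(\mathbf{w},\mathcal{R}u_P^\star)\le (A^{-1}\mathbf{w},\mathbf{w})^{1/2}\,a(\mathcal{R}u_P^\star,\mathcal{R}u_P^\star)^{1/2},
\]
using the Cauchy--Schwarz inequality in the $A$-inner product. Continuity (ii) gives $a(\mathcal{R}u_P^\star,\mathcal{R}u_P^\star)\le c_R\,b(u_P^\star,u_P^\star)$. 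Substituting and dividing by $b(u_P^\star,u_P^\star)^{1/2}$ yields $(M^{-1}\mathbf{w},\mathbf{w})\le c_R(A^{-1}\mathbf{w},\mathbf{w})$. This step is routine.

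\textbf{Lower bound.} This is the main step, and the approach is a minimization characterisation of $M^{-1}$. I would first show that for every $u\in\mathcal{H}_0$,
\[
(M u,u)=\min\{\,b(u_P,u_P):\ \mathcal{R}u_P=u\,\}.
\]
This is a constrained quadratic minimization; it is feasible by (i), and its Lagrange conditions give $B u_P=\mathcal{R}^*\mu$ with $\mathcal{R}B^{-1}\mathcal{R}^*\mu=u$, hence $\mu=Mu$ and the minimal value $b(u_P,u_P)=(\mu,u)=(Mu,u)$. Alternatively, one can verify directly that $u_P^\star=B^{-1}\mathcal{R}^*Mu$ is feasible and is $b$-orthogonal to $\ker\mathcal{R}$. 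Given this identity, the stable decomposition (iii) supplies some $u_P$ with $\mathcal{R}u_P=u$ and $c_T\,b(u_P,u_P)\le a(u,u)$, so $c_T(Mu,u)\le c_T\,b(u_P,u_P)\le (Au,u)$. Combined with the upper bound, which in dual form reads $(Au,u)\le c_R(Mu,u)$, Lemma~\ref{lem:cond_bound} gives $\mathrm{spec}(M^{-1}A)\subset[c_T,c_R]$.

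The only delicate points are identifying the adjoint correctly (Euclidean versus $b$-inner product) and checking feasibility and optimality in the minimization identity; everything else is Cauchy--Schwarz.
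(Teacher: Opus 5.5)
The paper states Lemma~\ref{lem:fsl} without proof (the surrounding material follows \cite{Dolean2015}), so there is no in-paper argument to compare with. Your proof is correct under assumptions that both you and the paper leave implicit: $A$ and $B$ are SPD, and $\mathcal{R}^*$ is the Euclidean adjoint. The Euclidean adjoint is the right choice, since it is what makes $\mathcal{R}_{\mathrm{ASM}}B^{-1}\mathcal{R}_{\mathrm{ASM}}^*=\sum_i R_i^TA_i^{-1}R_i$. Your upper bound is a clean Cauchy--Schwarz argument in the $A$-inner product.

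Your route differs from the usual one (as in \cite{Dolean2015}) in the lower bound. The usual proof takes the decomposition $u_P$ from (iii) and applies Cauchy--Schwarz in the $b$-inner product to $a(u,u)=(\mathcal{R}^*Au,u_P)=b(B^{-1}\mathcal{R}^*Au,u_P)$. This gives $a(u,u)\le b(u_P,u_P)^{1/2}\,a(M^{-1}Au,u)^{1/2}$, and hence $c_T\,a(u,u)\le a(M^{-1}Au,u)$, directly for the $A$-Rayleigh quotient of the $A$-self-adjoint operator $M^{-1}A$, without ever forming $M=(M^{-1})^{-1}$.

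You instead prove the variational identity $(Mu,u)=\min_{\mathcal{R}u_P=u}b(u_P,u_P)$, the abstract Schwarz characterisation familiar from \cite{ToselliWidlund2005}. This costs a little more:
\begin{itemize}
\item you need $M^{-1}$ invertible, which is where you use surjectivity;
\item you need the order reversal $M^{-1}\preceq c_RA^{-1}\Leftrightarrow A\preceq c_RM$ behind your ``dual form'', which you assert without comment. It holds because $A^{1/2}M^{-1}A^{1/2}$ and $M^{-1/2}AM^{-1/2}$ are both similar to $M^{-1}A$. The same fact reconciles your stated plan, written with $M^{-1}$ and $A^{-1}$, with the lower bound you actually prove, written with $M$ and $A$.
\end{itemize}

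In return your route gives more than the lemma asks. Every admissible decomposition satisfies $b(u_P,u_P)\ge(Mu,u)$, with equality for $u_P^\star=B^{-1}\mathcal{R}^*Mu$. So the best constant in (iii) is exactly $\lambda_{\min}(M^{-1}A)$: the lower bound is sharp, and the optimal decomposition is explicit.
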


This lemma isolates the analysis of Schwarz preconditioners into three
independent ingredients: geometric overlap (surjectivity), local stability
(continuity), and the ability to decompose global functions into local ones
(stable decomposition).

%-----------------------------------------------------------------

\subsubsection*{Application to one-level Schwarz methods}

We now apply Lemma~\ref{lem:fsl} to classical one-level additive Schwarz
preconditioners.

\noindent\textbf{Continuity estimate.}
The continuity constant is controlled by the coloring number
$\mathcal{N}_c$, i.e.,
\[
c_R = \mathcal{N}_c.
\]

\noindent\textbf{Stable decomposition.}
The stability constant is bounded in terms of a local Rayleigh quotient,
\[
\tau_1 :=
\min_i \min_{\mathbf{U}_i\neq 0}
\frac{\mathbf{U}_i^T A_i^{\mathrm{Neu}} \mathbf{U}_i}
     {\mathbf{U}_i^T D_i R_i A R_i^T D_i \mathbf{U}_i},
\]
where $A_i^{\mathrm{Neu}}$ denotes the Neumann operator on subdomain $i$.
If $\mathcal{M}_c$ denotes the maximal overlap multiplicity, then
\[
c_T = \frac{\tau_1}{\mathcal{M}_c}.
\]

Combining these estimates yields the spectral bound
\[
\frac{\tau_1}{\mathcal{M}_c}
\;\le\;
\lambda(M_{\mathrm{ASM}}^{-1}A)
\;\le\;
\mathcal{N}_c.
\]

\noindent\textbf{Remark.}
For problems with strong coefficient heterogeneity, the quantity $\tau_1$
can become very small, explaining the loss of robustness of one-level methods.

%-----------------------------------------------------------------

\subsubsection*{Extension to other Schwarz variants}

\add{The same framework also applies, with minor modifications, to the \emph{symmetric} variants of optimised Schwarz methods (e.g.\ SORAS). For the non-symmetric restricted and optimised methods RAS and ORAS, analogous spectral bounds are available only under additional structural assumptions and are not derived here.}
For instance, the restricted additive Schwarz (RAS) and optimized RAS (ORAS)
preconditioners read
\[
M_{\mathrm{RAS}}^{-1} = \sum_{i=1}^N R_i^T D_i A_i^{-1} R_i,
\qquad
M_{\mathrm{ORAS}}^{-1} = \sum_{i=1}^N R_i^T D_i B_i^{-1} R_i,
\]
where $B_i$ corresponds to a Robin-type subdomain operator.

\add{%
\begin{remark}\label{rem:fsl_ras_oras}
The symmetric form of the fictitious space lemma (Lemma~\ref{lem:fsl}) requires a symmetric preconditioner of the form $M^{-1}=\mathcal{R}B^{-1}\mathcal{R}^{*}$ with a self-adjoint $B$. RAS and ORAS do not fit this template directly; their analysis usually relies on a symmetrisation step (recovering SORAS) or on convergence results for non-stationary iterations. We refer the reader to~\cite{Dolean2015} for a discussion of these subtleties.
\end{remark}%
} 
\subsubsection*{Spectral bounds for SORAS}
Among symmetric variants we can also mention the SORAS preconditioner,
\[
M_{\mathrm{SORAS}}^{-1}
=
\sum_{i=1}^N R_i^T D_i B_i^{-1} D_i R_i.
\]
Define the bilinear forms
\[
a(\mathbf{U},\mathbf{V}) := \mathbf{V}^T A \mathbf{U},
\qquad
b(\mathcal{U},\mathcal{V}) := \sum_{i=1}^N \mathbf{V}_i^T B_i \mathbf{U}_i,
\]
and the assembly operator
\[
\mathcal{R}_{\mathrm{SORAS}}(\mathcal{U})
:=
\sum_{i=1}^N R_i^T D_i \mathbf{U}_i.
\]
Then
\[
M_{\mathrm{SORAS}}^{-1}
=
\mathcal{R}_{\mathrm{SORAS}}\, B^{-1}\, \mathcal{R}_{\mathrm{SORAS}}^{*}.
\]

Applying Lemma~\ref{lem:fsl} yields the bounds
\[
\frac{\tau_1}{\mathcal{M}_c}
\;\le\;
\lambda(M_{\mathrm{SORAS}}^{-1}A)
\;\le\;
\mathcal{N}_c\, \gamma_1,
\]
where
\[
\gamma_1 :=
\max_i \max_{\mathbf{U}_i\neq 0}
\frac{(R_i^T D_i \mathbf{U}_i)^T A (R_i^T D_i \mathbf{U}_i)}
     {\mathbf{U}_i^T B_i \mathbf{U}_i}.
\]

These estimates highlight the role of the local subdomain solvers $B_i$ in
controlling the upper part of the spectrum.

\subsection{Numerical results}
\label{sec:elast_NumericalResults}

A critical test for the robustness of preconditioners is the simulation of nearly incompressible elastic materials. In such settings, standard iterative methods often face severe difficulties due to the ill-conditioning introduced by the incompressibility constraint.

\add{The matrix $A$ arising from the Taylor--Hood discretisation below is symmetric \emph{indefinite}, so the standard GenEO spectral theory (which requires $A$ SPD) does not strictly apply. The results reported in this section should therefore be regarded as numerical evidence of the practical robustness of GenEO-type coarse spaces in the indefinite regime; extensions of the theory to saddle-point and mixed formulations are an active research direction (see, e.g.,~\cite{Bootland2022, DoleanFryLanger2026}).}

We consider a two-material model composed of steel and rubber:
\begin{itemize}
  \item Steel: $(E_1, \nu_1) = (210 \times 10^9, 0.3)$
  \item Rubber: $(E_2, \nu_2) = (0.1 \times 10^9, 0.4999)$
\end{itemize}

The resulting PDE system is discretized using Taylor–Hood mixed finite elements, i.e., $\mathbb{P}_2^d$–$\mathbb{P}_1$ pairs for displacement and pressure. The weak formulation is saddle-point structured, leading to a symmetric but indefinite matrix system:
\[
A =
\begin{bmatrix}
H & B^T \\
B & -C
\end{bmatrix}
\]

\begin{figure}[H]
\centering
\includegraphics[width=0.6\textwidth]{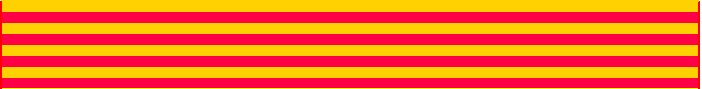}
\caption{2D sandwich: steel core and rubber layers.}
\end{figure}

The domain is partitioned using METIS for optimal subdomain layout:

\begin{center}
\includegraphics[width=0.6\textwidth]{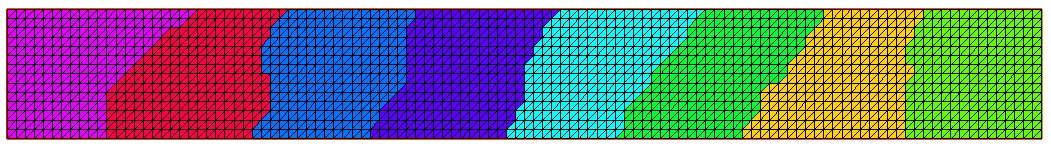}
\end{center}

We test various preconditioners with and without coarse spaces. The results below show the number of GMRES iterations required for convergence, along with the dimension of the coarse space used in each case:

\begin{table}[H]
\caption{GMRES iteration counts for 2D elasticity with increasing subdomains}
\centering
\tiny
\begin{tabular}{|r|c||c||c||c|c||c|c||c|c||c|c|}
\hline
DOFs & Subdomains & AS & SORAS & AS+ZEM & dim & SORAS+ZEM & dim & AS+GenEO & dim & SORAS+GenEO-2 & dim \\
\hline
35841   & 8   & 150   & 184   & 117   & 24   & 79    & 24   & 110   & 184   & 13    & 145   \\
70590   & 16  & 276   & 337   & 170   & 48   & 144   & 48   & 153   & 400   & 17    & 303   \\
141375  & 32  & 497   & >1000 & 261   & 96   & 200   & 96   & 171   & 800   & 22    & 561   \\
279561  & 64  & >1000 & >1000 & 333   & 192  & 335   & 192  & 496   & 1600  & 24    & 855   \\
561531  & 128 & >1000 & >1000 & 329   & 384  & 400   & 384  & >1000 & 2304  & 29    & 1220  \\
1077141 & 256 & >1000 & >1000 & 369   & 768  & >1000 & 768  & >1000 & 3840  & 36    & 1971  \\
\hline
\end{tabular}
\end{table}

The results clearly demonstrate:
\begin{itemize}
  \item \textbf{Standard ASM and SORAS fail} to converge beyond 64 subdomains.
  \item Adding a coarse space (ZEM or GenEO) drastically reduces iteration counts.
  \item GenEO delivers both robustness and scalability even at 256 subdomains.
\end{itemize}

We now test the strong and weak scalability of the proposed methods on large-scale problems.\\

\noindent\textbf{Strong Scalability: Stokes in 3D.} We consider a driven cavity problem for the Stokes system with automatic mesh partitioning.

\begin{center}
\includegraphics[width=0.85\textwidth]{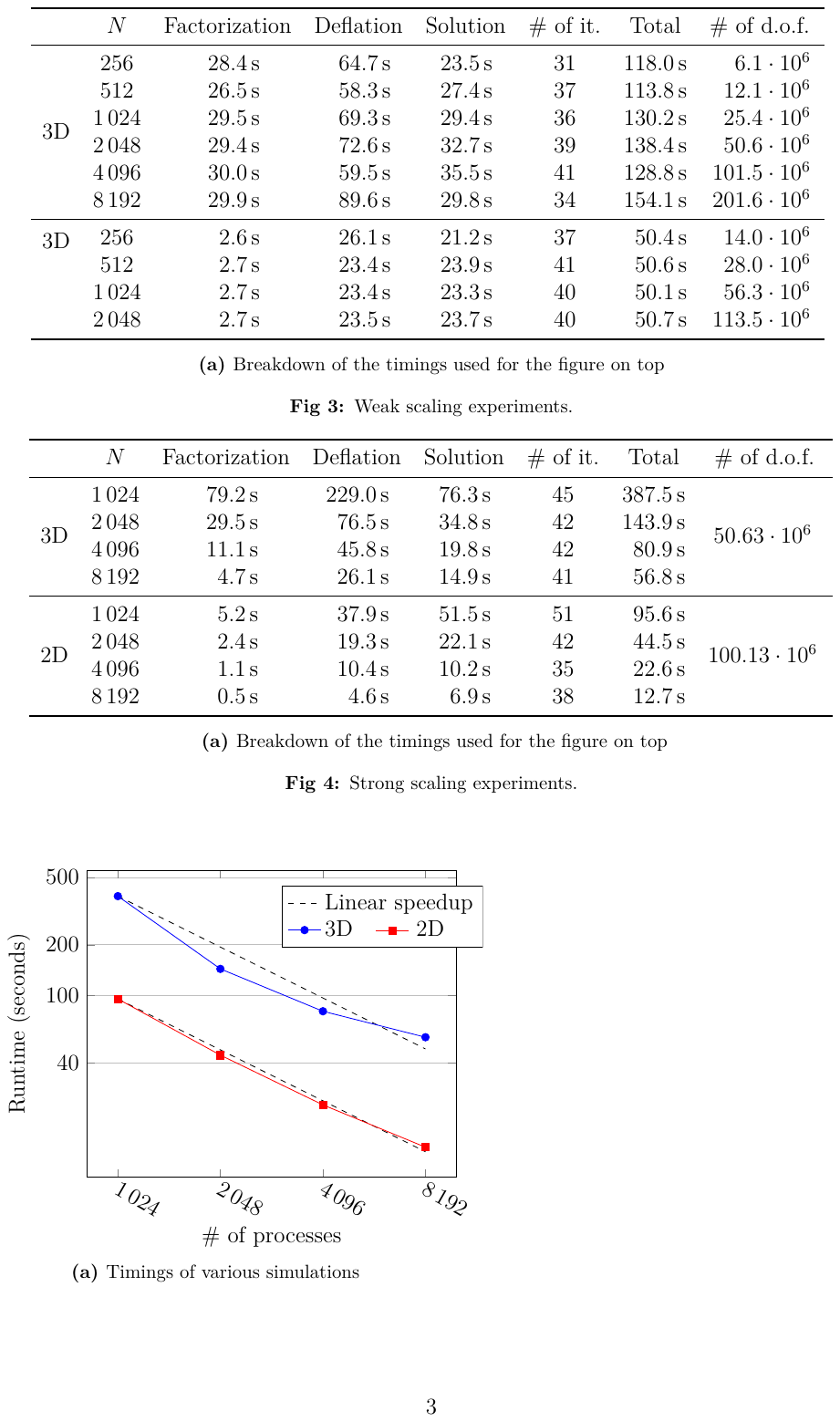}
\end{center}

\noindent\textbf{Weak Scalability: Heterogeneous Elasticity in 3D} We also test weak scalability using a steel–rubber sandwich geometry and automatic mesh partitioning.

\begin{center}
\includegraphics[width=0.63\textwidth]{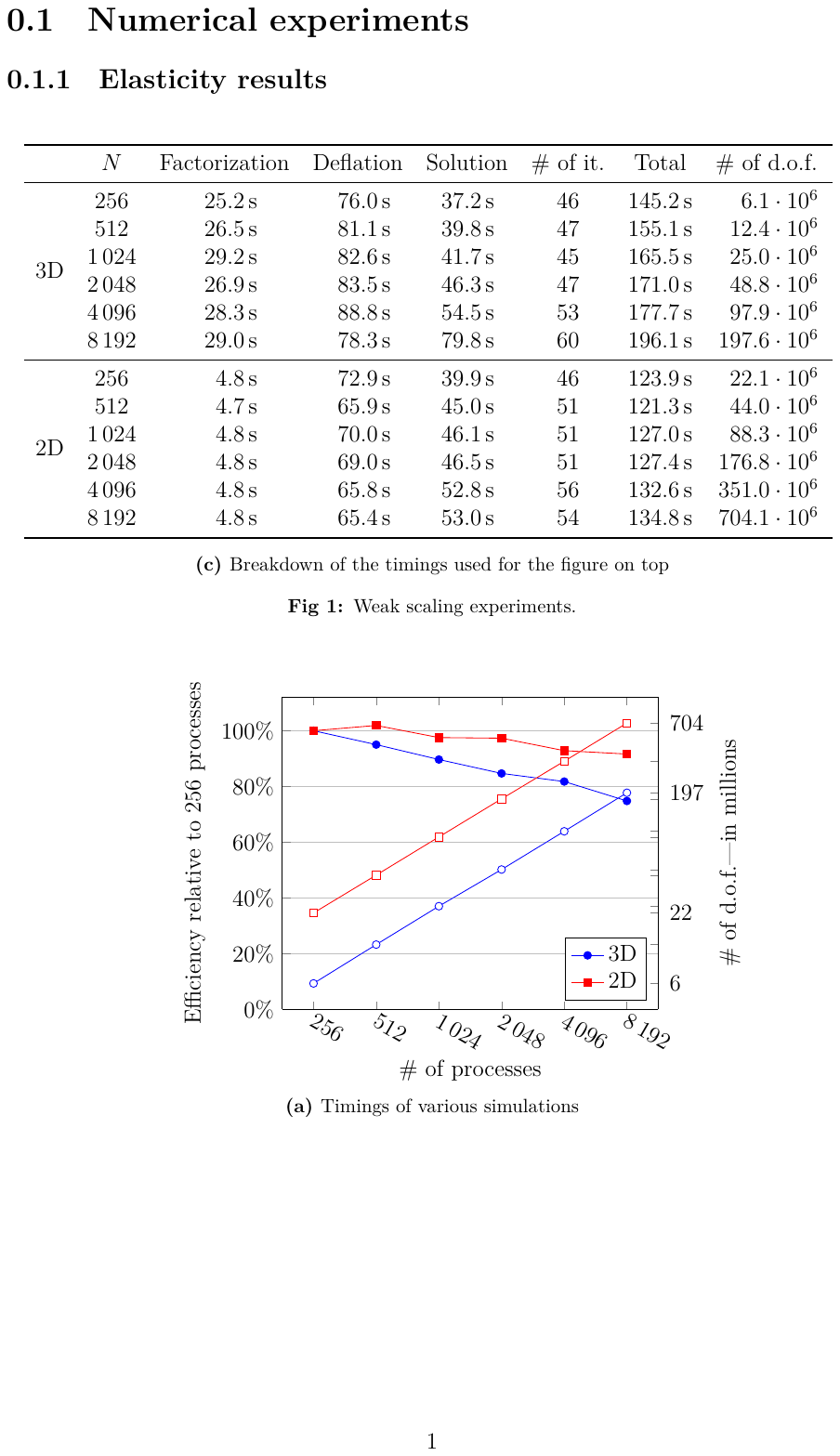}
\end{center}
\textbf{Hardware:} IBM/Blue Gene Q, demonstrating excellent weak scalability with GenEO-based methods.

In this section, we have developed a comprehensive framework for understanding and implementing \textbf{two-level domain decomposition methods} based on \emph{spectral coarse spaces}. These methods address the core challenge of scalability in solving large, heterogeneous PDE systems. We started by identifying the limitations of classical one-level additive Schwarz methods, particularly in terms of weak scalability and sensitivity to coefficient heterogeneity. To overcome these, we introduced \textbf{coarse space corrections}—a global enhancement mechanism designed to efficiently transmit long-range information across the domain.

\tikzset{
  base/.style = {rectangle, draw=black, minimum height=1.2em, minimum width=3.5cm, rounded corners, font=\small, align=center},
  advanced/.style = {rectangle, draw=blue!70!black, thick, fill=blue!10, minimum height=1.2em, minimum width=3.8cm, rounded corners, font=\small\bfseries, align=center},
  coarse/.style = {rectangle, draw=green!50!black, thick, fill=green!10, minimum height=1.2em, minimum width=3.8cm, rounded corners, font=\small\bfseries, align=center},
  line/.style = {draw, -{Latex[scale=1.2]}, thick}
}

\begin{figure}[h!]
\centering
\begin{tikzpicture}[node distance=1.6cm and 1.5cm]

% One-level methods
\node[base] (jacobi) {Block Jacobi};
\node[base, below=of jacobi] (asm) {Additive Schwarz (ASM)};
\node[base, below=of asm] (ras) {Restricted Additive Schwarz (RAS)};
\node[advanced, right=3cm of ras] (soras) {Symmetric Optimized RAS (SORAS)};
\node[advanced, above=of soras] (oras) {Optimized RAS (ORAS)};

% Two-level enhancements
\node[coarse, below right=1.2cm and -2.4cm of ras] (nic) {Nicolaides Coarse Space};
\node[coarse, below=of nic] (geneo) {GenEO \\ Spectral Coarse Space};

% Arrows: vertical
\draw[line] (jacobi) -- (asm);
\draw[line] (asm) -- (ras);

% Arrows: horizontal to optimized
\draw[line] (ras.east) -- (oras.west);
\draw[line] (oras) -- (soras);

% Arrows: two-level improvements
\draw[line] (ras) -- (nic);
\draw[line] (nic) -- (geneo);

% Optional labels
\node[align=center, font=\small\itshape, left=0.6cm of jacobi] {One-level\\methods};
\node[align=center, font=\small\itshape, left=0.6cm of nic] {Two-level\\methods};

\end{tikzpicture}
\caption{Hierarchy of Schwarz-based methods and their two-level extensions.}
\end{figure}

\subsubsection*{Key Takeaways}
\begin{itemize}
  \item \textbf{Spectral coarse spaces are essential} to ensure robustness and scalability in the presence of multiscale and high-contrast features.
  \item \textbf{GenEO adapts automatically} to material heterogeneity, reducing iteration counts and enabling convergence even at massive scales.
  \item \textbf{Convergence guarantees} are available for a wide class of methods using the Fictitious Space Lemma.
\end{itemize}

%=================================================================
\section{Robust solvers for high-frequency problems}
\label{sec:helmholtz_equation}

Many wave-propagation applications are naturally posed in the frequency domain,
either because the excitation is (approximately) monochromatic or because one
solves for multiple right-hand sides at a fixed frequency.
Two representative examples are electromagnetics and acoustics.

\begin{backgroundinformation}{Two motivating applications}
\begin{itemize}
\item \textbf{Electromagnetic imaging.}
Recovering the electric permittivity $\varepsilon$ from boundary measurements
leads, under a time-harmonic assumption, to the (curl--curl) Maxwell system
\[
\nabla \times (\mu^{-1}\nabla \times \mathbf{E}) - \omega^2 \varepsilon\,\mathbf{E} = \mathbf{J},
\]
where $\mu$ is the magnetic permeability and $\mathbf{E}$ is the electric field.

\item \textbf{Seismic (acoustic) imaging.}
In isotropic acoustics, a time-harmonic ansatz reduces the wave equation to
a Helmholtz problem
\[
-\Delta u - \left(\frac{\omega^2}{c(x)^2}\right) u = f,
\]
where $c(x)$ denotes the local wave speed.
\end{itemize}
\end{backgroundinformation}

These settings motivate the development of \emph{accurate and scalable} solvers
for large, indefinite, frequency-domain systems. For the sake of simplicity, we will limit ourselves to the Helmholtz problem in the remaining part of the section.

%-----------------------------------------------------------------
Assume a monochromatic forcing, the solution $v$ of a wave equation will also be mono-chromatic:
\(
f(x,t)= f(x)\,e^{-i\omega t},\,
v(x,t)= u(x)\,e^{-i\omega t}.
\)
Substitution into the wave equation yields a stationary PDE for $u$:
\begin{equation}
-\Delta u - n(x)^2 \omega^2 u = f,
\qquad n(x)=\frac{1}{c(x)},
\qquad k(x) := n(x)\,\omega .
\label{eq:helmholtz_k}
\end{equation}
We refer to $k(x)$ as the \emph{wavenumber}. When $k$ is large, solutions exhibit
strong oscillations, a feature that drives both approximation and solver
difficulty.

\begin{important}{Oscillations, wavelength, and numerical difficulty}
The typical wavelength scales as $\lambda \sim 1/k$.
As $k$ increases, the solution oscillates more rapidly, the discrete operator
becomes increasingly indefinite, and iterative solvers become more sensitive to
preconditioning and coarse-space design.
\end{important}

\begin{figure}[h!]
\centering
\begin{minipage}[c]{0.75\textwidth}
\includegraphics[width=0.8\linewidth]{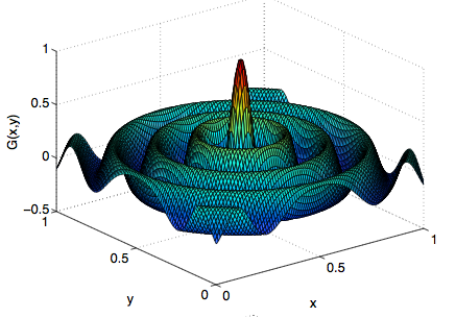}
\end{minipage}
\begin{minipage}[c]{0.2\textwidth}
\caption{Typical oscillatory Helmholtz solution at moderate-to-high frequency. Discretisation should capture oscillations}
\label{fig:helm_solution}
\end{minipage}
\end{figure}

%-----------------------------------------------------------------
A first requirement is \emph{resolution}: the mesh must capture oscillations.
A second, more subtle, requirement is \emph{dispersion control} (the pollution effect),
which forces additional refinement beyond points-per-wavelength heuristics.

\begin{important}{Pollution effect (FEM resolution condition)}
Keeping $h\omega$ constant is generally insufficient to control the error:
for a $p$-th order finite element method, quasi-optimal accuracy typically
requires the more stringent condition
\begin{equation}
h^p\,\omega^{p+1}\ \lesssim\ 1.
\label{eq:pollution_condition}
\end{equation}
In practice, this means that increasing $p$ can reduce the needed degrees of
freedom per wavelength, but does not eliminate the frequency-driven growth of
the global problem size.
\end{important}

\begin{svgraybox}
\textbf{Consequences.}
To \emph{resolve} oscillations one expects $h \sim \omega^{-1}$,
while to \emph{mitigate pollution} one needs $h \ll \omega^{-1}$,
often consistent with $h \sim \omega^{-1-1/p}$.
This trade-off (mesh resolution versus approximation order) directly impacts
memory, time-to-solution, and solver choice.
\end{svgraybox}

As far as the discretisation is concerned, two main strategies are widely used in large-scale Helmholtz computations.

\begin{backgroundinformation}{FD vs.\ FE in a nutshell}
\begin{itemize}
\item \textbf{Finite differences (FD).}
Structured grids, compact stencils, very sparse matrices, and the possibility
of \emph{optimized} schemes that reduce dispersion through coefficient tuning.

\item \textbf{Finite elements (FE).}
Variational formulation, excellent geometric flexibility, and local refinement
(\emph{$h$-adaptivity}) for heterogeneous coefficients and complex interfaces,
while keeping global sparsity.
\end{itemize}
\end{backgroundinformation}
A simple comparison on a velocity-gradient test case (fixed points per wavelength) from \cite{TournierJ022}
illustrates the practical trade-off.

\begin{table}[H]
\centering
\caption{\add{FD vs.\ FE on a velocity-gradient test case (4 ppwl). The parameter $\alpha$ controls the slope of the imposed linear velocity profile $c(x)=c_0+\alpha\, x_3$; larger $\alpha$ produces a wider spread of local wavelengths, as reported in the columns $\lambda_{\min}$ and $\lambda_{\max}$.}}
\label{tab:fd_fe_comp}
\begin{tabular}{|c|c|c|c|c|c|c|}
\hline
$\alpha$ & $\lambda_{\min}$ & $\lambda_{\max}$ & \#DoF FD & \#DoF FE & Error FD & Error FE \\
\hline
0.8 & 125 & 1200 & 13M & 28M & 0.0079 & 0.034 \\
2.0 & 125 & 3125 & 13M & 16M & 0.044  & 0.034 \\
\hline
\end{tabular}
\end{table}
\begin{figure}[h!]
\centering
\includegraphics[width=0.60\textwidth]{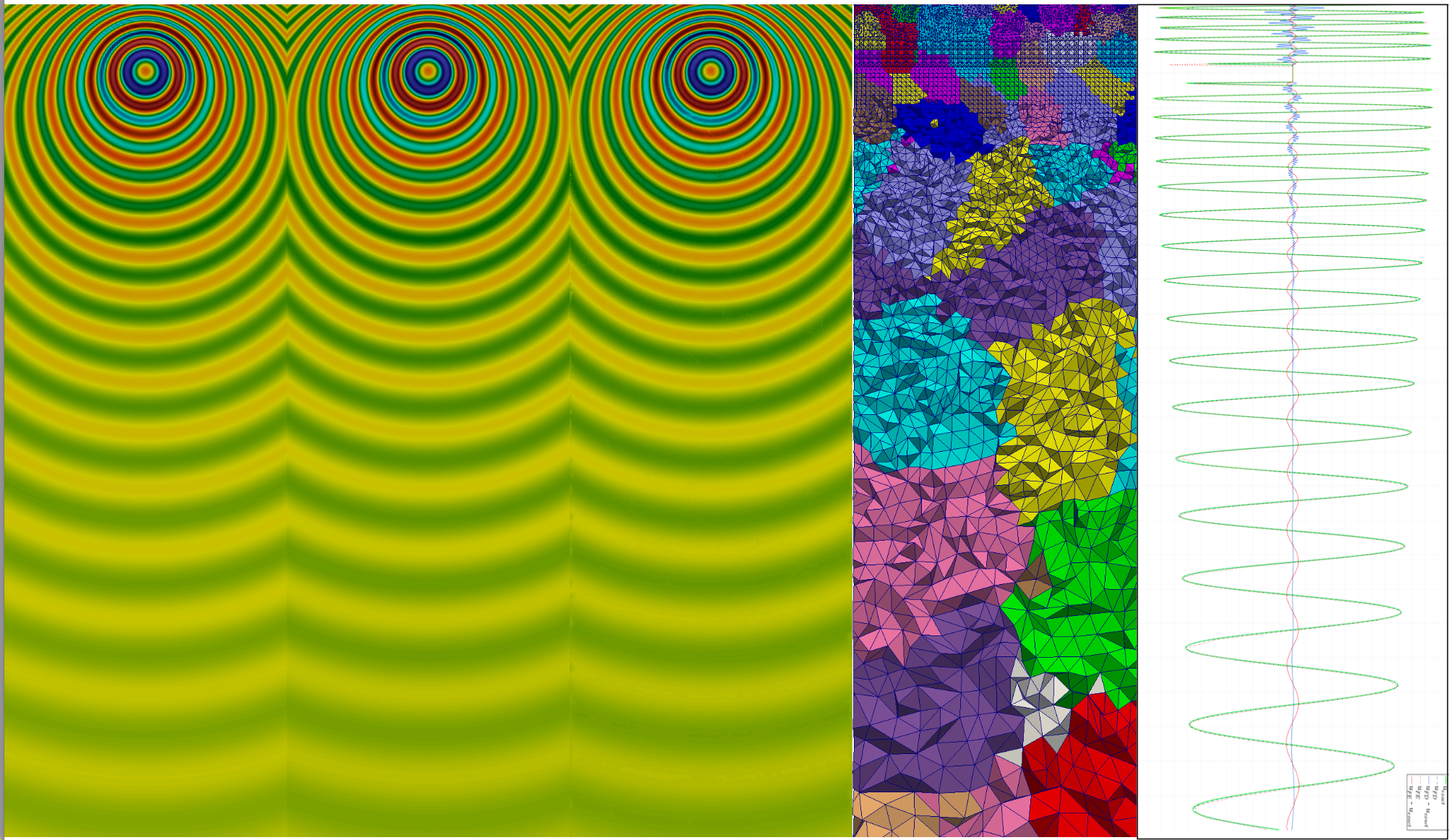}
\caption{Left: FD and FE solutions. Right: FE mesh and vertical profile comparison.}
\label{fig:fd_fe_solutions}
\end{figure}

\begin{important}{Interpretation}
For comparable accuracy, FD may achieve the target error with fewer degrees of
freedom in regimes where a structured discretization is admissible, leading to
lower memory usage and time per iteration. FE becomes indispensable when complex
geometry, interfaces, and local refinement dominate the modeling needs.
\end{important}

Both FD and FE lead to large sparse linear systems \(
A\mathbf{u}=\mathbf{b}\). Moreover, 

\begin{itemize}
\item $A$ is sparse but \textbf{indefinite} (and often highly non-normal after discretization).
\item The global size scales roughly as $n=\#\mathrm{DoF}\sim \omega^{(1+1/p)d}$
(heuristic, combining resolution and pollution effects).
\item Krylov methods (e.g.\ GMRES, BiCGStab) typically require strong preconditioning.
\item Direct solvers become prohibitively expensive in 3D at high frequency.
\end{itemize}

\begin{svgraybox}
\textbf{Transition to domain decomposition.}
At scale, the practical route is to combine Krylov methods with \emph{domain
decomposition} preconditioners. However, for Helmholtz, one-level methods alone
rarely suffice at high frequency: robust performance requires \emph{global}
mechanisms---in particular, carefully designed \emph{coarse space corrections}.
\end{svgraybox}

%-----------------------------------------------------------------
\subsection{A geometric choice: the grid-based coarse space}
\label{subsec:grid_cs}

A natural and historically first choice of coarse space for two-level methods
is obtained from a \emph{geometric} hierarchy.
Starting from the fine mesh $\mathcal{T}_h$, one introduces a coarse mesh
$\mathcal{T}_H$ with mesh diameter $H_{\mathrm{coarse}} \gg h$.
The corresponding coarse finite element space
$V_H \subset V_h$ is typically chosen as a standard low-order FE space
on $\mathcal{T}_H$.

Let $R_0^T$ denote the interpolation (prolongation) operator from the coarse
space $V_H$ to the fine space $V_h$.
Its transpose $R_0$ acts as restriction.
We define
\[
Z := R_0^T,
\qquad
E := Z^T A Z .
\]
Here:
\begin{itemize}
\item $Z$ embeds coarse functions into the fine space;
\item $E$ is the \emph{Galerkin coarse operator}, i.e.,
the stiffness matrix obtained by projecting $A$ onto $V_H$.
\end{itemize}

The associated coarse correction operator reads
\[
M_0^{-1} = Z E^{-1} Z^T,
\]
which is precisely the standard Galerkin projection on a nested finite
element hierarchy.
When combined additively with a one-level Schwarz preconditioner,
this yields the classical two-level domain decomposition method.

\medskip
\noindent
\textbf{Interpretation.}
The geometric coarse space represents low-frequency (long-wavelength)
components of the error.
In elliptic problems, these components correspond to smooth global modes.
In wave problems, they approximate oscillatory modes whose wavelength is
large relative to the subdomain size.
The coarse space thus provides a global communication channel that
one-level methods lack.

\begin{figure}[H]
\centering
\includegraphics[width=0.46\textwidth]{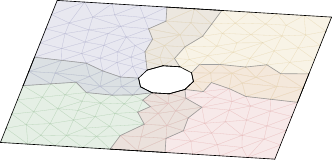}\hfill
\includegraphics[width=0.46\textwidth]{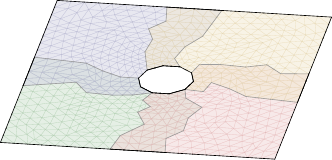}
\caption{Illustration of a geometric coarse space: coarse mesh hierarchy and a
typical two-level decomposition (schematic).}
\label{fig:grid_cs_illustration}
\end{figure}

%--------------------------------------------------------------
For indefinite wave problems, the simple application of the above doesn't work as the situation is quite delicate and requires additional assumptions. Consider the absorptive (shifted Laplacian) Helmholtz operator
\[
-\Delta - (k^2 + i\xi),
\]
where $k$ is the wavenumber and $\xi>0$ introduces absorption.
\add{The absorption term $i\xi$ restores a G\aa rding-type inequality (and in particular makes the sesquilinear form coercive on a shifted problem), which is sufficient to obtain spectral estimates analogous to the elliptic case.} Rigorous analysis shows that two-level domain decomposition
preconditioners with a geometric coarse space can achieve
$k$-independent GMRES convergence,
provided the coarse mesh and absorption are chosen appropriately.

\begin{important}{Robustness regime for absorptive Helmholtz}
Analysis due to Graham, Spence, and Vainikko~\cite{GrahamSpenceVainikko2017}
proves that a grid coarse space is effective if
\[
H_{\mathrm{coarse}} \sim k^{-\alpha} \quad (0<\alpha\le 1),
\qquad
|\xi| \sim k^2,
\]
in which case GMRES convergence is independent of the
wavenumber $k$. In words:
\begin{itemize}
\item the coarse mesh must resolve the wavelength at a $k$-dependent scale;
\item the absorption must be strong enough (comparable to $k^2$)
to control indefiniteness.
\end{itemize}
This robustness result extends to Maxwell's equations; see
\cite{Bonazzoli2019} for details.
\end{important}

The above conditions provide \emph{sufficient} conditions for robustness,
but they are often conservative compared to observed behavior.

\begin{important}{Practical viewpoint}
The shifted-Laplacian theory typically assumes (i) relatively generous overlap between subdomains; (ii) a coarse mesh that becomes increasingly fine as $k$ grows
($H_{\mathrm{coarse}}\to 0$); (iii) absorption levels of the same order as the indefinite term
($|\xi|\sim k^2$).

In many practical computations though (i) acceptable convergence is observed with much coarser grids; (ii) significantly milder absorption ($|\xi|\ll k^2$) suffices; (iii) the coarse-space dimension required in practice is smaller than predicted by worst-case theory.

This discrepancy suggests that the theoretical regime
may overestimate the coarse-space size needed for robustness.
It has motivated the development of alternative coarse spaces,
such as DtN- and GenEO-based constructions,
which aim to capture the relevant global propagative modes
more efficiently and in a problem-adaptive manner.
\end{important}

%--------------------------------------------------------------
\subsection{Spectral two-level preconditioners and coarse-space design choices}
\label{subsec:two_level_summary}

A generic two-level domain decomposition preconditioner for Helmholtz problems
can be written in the form
\[
M_2^{-1}
=
\underbrace{\sum_{j=1}^N R_j^T D_j A_j^{-1} R_j}_{\text{one-level (O)RAS component}}
\;+\;
\underbrace{Z\,(Z^*AZ)^{-1} Z^*}_{\text{coarse correction}} .
\]

The first term represents a one-level overlapping Schwarz preconditioner
(typically RAS/ORAS), which efficiently reduces high-frequency
(local) error components.
The second term is a Galerkin coarse correction,
which targets global and slowly converging modes.
The effectiveness of the two-level method hinges almost entirely
on the choice of the coarse basis $Z$.
It must:
\begin{itemize}
\item represent global error components poorly reduced by local solves;
\item remain of moderate dimension;
\item be robust with respect to heterogeneity and increasing frequency.
\end{itemize}

We summarize the widely used spectral coarse-space constructions in the addition to the grid based one introduced previously.

\textbf{(1) Dirichlet-to-Neumann coarse space (DtN CS) \cite{Conen2014}.}

DtN coarse spaces are constructed from local interface eigenmodes.
For each subdomain $\Omega_j$, one computes eigenpairs
\[
\mathrm{DtN}_{\Omega_j}(u^l_{\Gamma_j}) = \lambda^l u^l_{\Gamma_j},
\]
where $\Gamma_j$ denotes the subdomain interface
and $\mathrm{DtN}_{\Omega_j}$ is the local Dirichlet-to-Neumann map. Selected interface modes are then harmonically extended into the interior:
\[
Z \;\ni\; R_j^T D_j\,\mathcal{H}(u^l_{\Gamma_j}),
\]
where $\mathcal{H}$ denotes harmonic extension.

%-------------------------------------------------------------

\textbf{(2) GenEO-type spectral coarse spaces.}

GenEO (Generalized Eigenproblems in the Overlap)
constructs the coarse space by solving local generalized eigenproblems
that identify modes poorly reduced by the one-level preconditioner. A classical Laplace-based variant reads
\[
L_j \mathbf{u}^l_j
=
\lambda^l\, D_j L_j D_j\,\mathbf{u}^l_j,
\]
where $L_j$ is a local elliptic operator.
Eigenvectors corresponding to small eigenvalues are retained,
and the global coarse space is assembled as
\[
Z \;\ni\; R_j^T D_j \mathbf{u}^l_j.
\]

For Helmholtz problems, frequency-aware variants
(often denoted ``$\mathcal{H}_k$-GenEO'')
replace $L_j$ by Helmholtz-type operators:
\[
\widetilde{B}_j \mathbf{u}^l_j
=
\lambda^l\, D_j B_{j,k} D_j\,\mathbf{u}^l_j.
\]

\medskip
\noindent
GenEO spaces are algebraically adaptive:
they automatically detect problematic local modes
induced by heterogeneity, near-nullspaces,
or indefiniteness.
This makes them particularly attractive
for strongly heterogeneous and high-frequency regimes.

We briefly summarize the current theoretical understanding:

\begin{itemize}
\item \textbf{Grid CS.}
Strong theory exists for absorptive Helmholtz
($k^2+i\xi$) problems.
Robustness is proven when
$H_{\mathrm{coarse}} \sim k^{-\alpha}$ $(0<\alpha\le 1)$
and $|\xi|\sim k^2$.

\item \textbf{$\Delta$-GenEO.}
Provably robust for heterogeneous SPD problems
and effective at low-to-moderate frequencies.
See \cite{Bootland2022, DoleanFryLanger2026}.

\item \textbf{$\mathcal{H}_k$-GenEO.}
Strong numerical evidence supports robustness
in high-frequency indefinite regimes.
Theoretical analysis is more delicate,
with partial results available;
see \cite{Bootland2022b}.
\end{itemize}

\subsection{Numerical experiments and benchmarks}
\label{sec:numerical_benchmarks}

This section evaluates the robustness and scalability of the proposed
two-level domain decomposition preconditioners for high-frequency
wave propagation problems.
All simulations are performed using the open-source
\texttt{FreeFEM}~\cite{Hecht2012} framework with the \texttt{ffddm} plugin and the
\texttt{HPDDM} C++/MPI solver backend.

\medskip
\noindent
\textbf{Overview of benchmark cases.}
We consider two categories of test problems:

\begin{itemize}
\item \textbf{Electromagnetic cavity benchmarks:}
the 2D and 3D COBRA cavity test cases, representative of resonant Maxwell problems.

\item \textbf{Geophysical acoustic models:}
the Marmousi model (2D layered media),
the SEG/EAGE Overthrust model (3D heterogeneous),
and the large-scale crustal model GO\_3D\_OBS.
\end{itemize}

Across all benchmarks, we first compare three coarse-space strategies and then focus on the most performant one:

\begin{enumerate}
\item \textbf{Grid coarse space (Grid CS)} — geometric hierarchy;
\item \textbf{H-GenEO} — spectral coarse space from local eigenproblems;
\item \textbf{DtN} — interface-based Dirichlet-to-Neumann coarse space.
\end{enumerate}

The goals are:
(i) to assess robustness with respect to frequency and discretization density,
(ii) to evaluate weak and strong scalability,
(iii) to quantify time-to-solution in realistic inversion settings.

%-----------------------------------------------------------------
\subsubsection*{Coarse-space comparison on representative wave problems}

We first compare the three coarse-space strategies on
Marmousi, COBRA (2D and 3D), and Overthrust.
Each configuration is tested at low and high frequency,
with discretizations of 5 or 10 points per wavelength (ppwl).

\begin{center}
\scriptsize
\begin{tabular}{c|c|c|c|c|c|c|c|c}
\multicolumn{3}{c|}{} & \multicolumn{2}{c|}{Grid CS} & \multicolumn{2}{c|}{H-GenEO} & \multicolumn{2}{c}{DtN} \\
\hline
Problem & $d$ & freq & 5 ppwl & 10 ppwl & 5 ppwl & 10 ppwl & 5 ppwl & 10 ppwl \\
\hline
\multirow{2}{*}{Marmousi} & \multirow{2}{*}{2D} & low & \cmark & \cmark & \cmark & \cmark & \cmark\cmark & \cmark\cmark \\
\cline{3-9}
 & & high & \cmark\cmark & \cmark & \xmark & \cmark\cmark & \cmark & \cmark \\
\hline
\multirow{2}{*}{COBRA Cavity} & \multirow{2}{*}{2D} & low & \cmark & \cmark & \xmark & \xmark & \cmark\cmark & \cmark\cmark \\
\cline{3-9}
 & & high & \xmark & \cmark & \xmark & \xmark & \cmark\cmark & \cmark\cmark \\
\hline
\multirow{2}{*}{COBRA Cavity} & \multirow{2}{*}{3D} & low & \cmark & \cmark\cmark & \xmark & \xmark & \cmark\cmark & \cmark \\
\cline{3-9}
 & & high & \xmark & \cmark\cmark & \xmark & \xmark & \cmark\cmark & \cmark \\
\hline
\multirow{2}{*}{Overthrust} & \multirow{2}{*}{3D} & low & \cmark & \cmark & \xmark & \cmark & \cmark & \cmark \\
\cline{3-9}
 & & high & \cmark & \cmark & \xmark & \cmark & \cmark & \cmark \\
\end{tabular}
\end{center}

These results, based on the detailed comparison in \cite{Bootland2021},
highlight several trends:

\begin{itemize}
\item Grid CS is generally robust over all regimes and test cases.
\item H-GenEO is effective in heterogeneous SPD-like settings
but less robust in strongly indefinite cavity cases. Also performs better in a high resolution cases (with at least 10 ppwl - points per wavelength)
\item DtN consistently performs well in most of regimes
\end{itemize}

Given its robustness and implementation simplicity (for the spectral coarse spaces we need to pre-compute eigenvalues in a setup phase),
the grid coarse space was retained for the subsequent
large-scale geophysical scalability study in \cite{TournierJ022}.

%-----------------------------------------------------------------
\subsubsection*{Large-scale geophysical benchmarks}

To evaluate scalability in realistic inversion contexts,
we consider isotropic geophysical acoustic frequency-domain models for which the main characteristics are summarized below.

\begin{table}[H]
\centering
\caption{Summary of benchmark properties}
\begin{tabular}{|c|c|c|c|c|c|c|c|c|}
\hline
Model & $c_m$ (m/s) & $c_M$ (m/s) & $f$ (Hz) & $\lambda_{min}$ & $\lambda_{max}$ & $G_m$ & $G_M$ & $N_\lambda$ \\
\hline
Homogeneous & 1500 & 1500 & 7.5 & 200 & 200 & 4 & 4 & 100 \\
Linear      & 1500 & 8500 & 7.5 & 200 & 1133 & 4 & 22.7 & 50 \\
Overthrust  & 2179 & 6000 & 10  & 218 & 600  & 4.4 & 12 & 50 \\
GO\_3D\_OBS & 1500 & 8639 & 3.75 & 400 & 2303 & 4 & 23 & 255 \\
\hline
\end{tabular}
\end{table}
The \texttt{GO\_3D\_OBS} crustal model contains realistic
geological interfaces (bathymetry, sediments).
An adapted tetrahedral mesh is used to match wavelength distribution.
At 3.75~Hz, the adapted mesh contains approximately
132 million elements,
whereas a uniform grid would require 402.4 million cells,
yielding a coarsening factor of about 3.

\begin{figure}[H]
\centering
\includegraphics[width=0.5\linewidth]{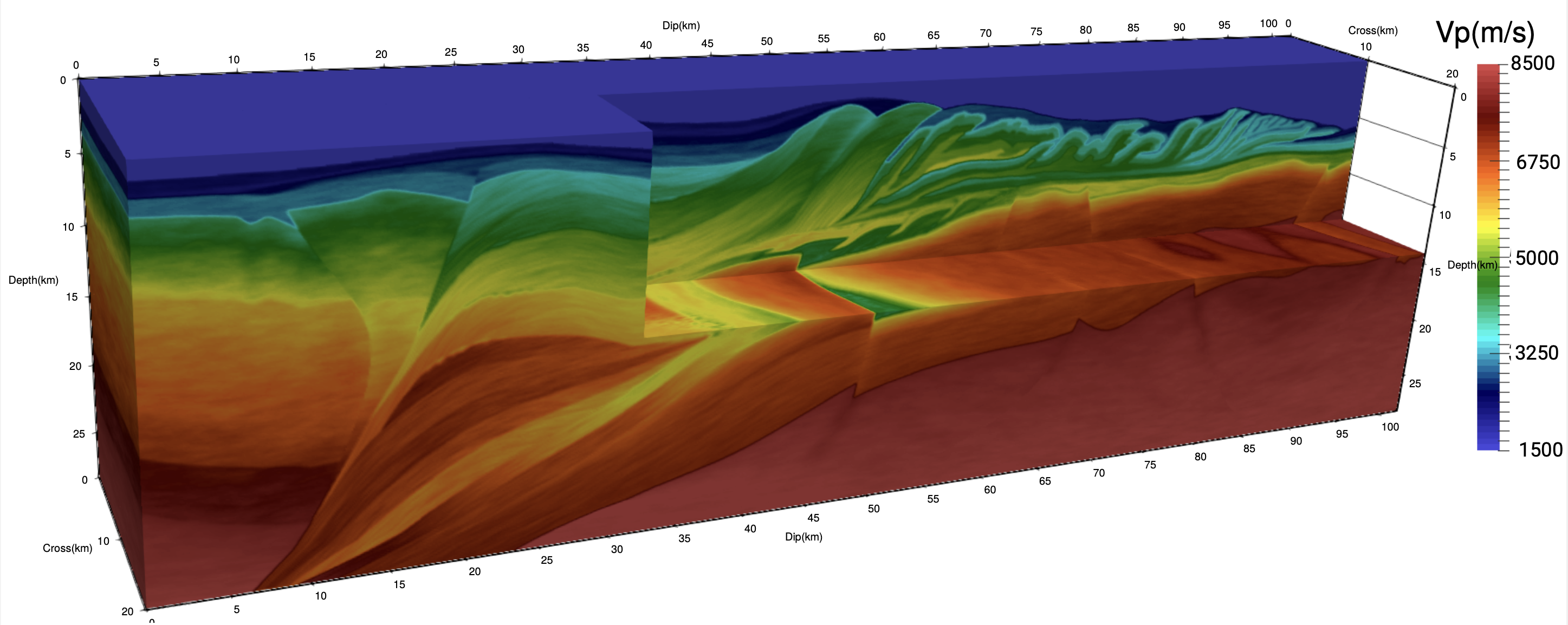}
\includegraphics[width=0.45\linewidth]{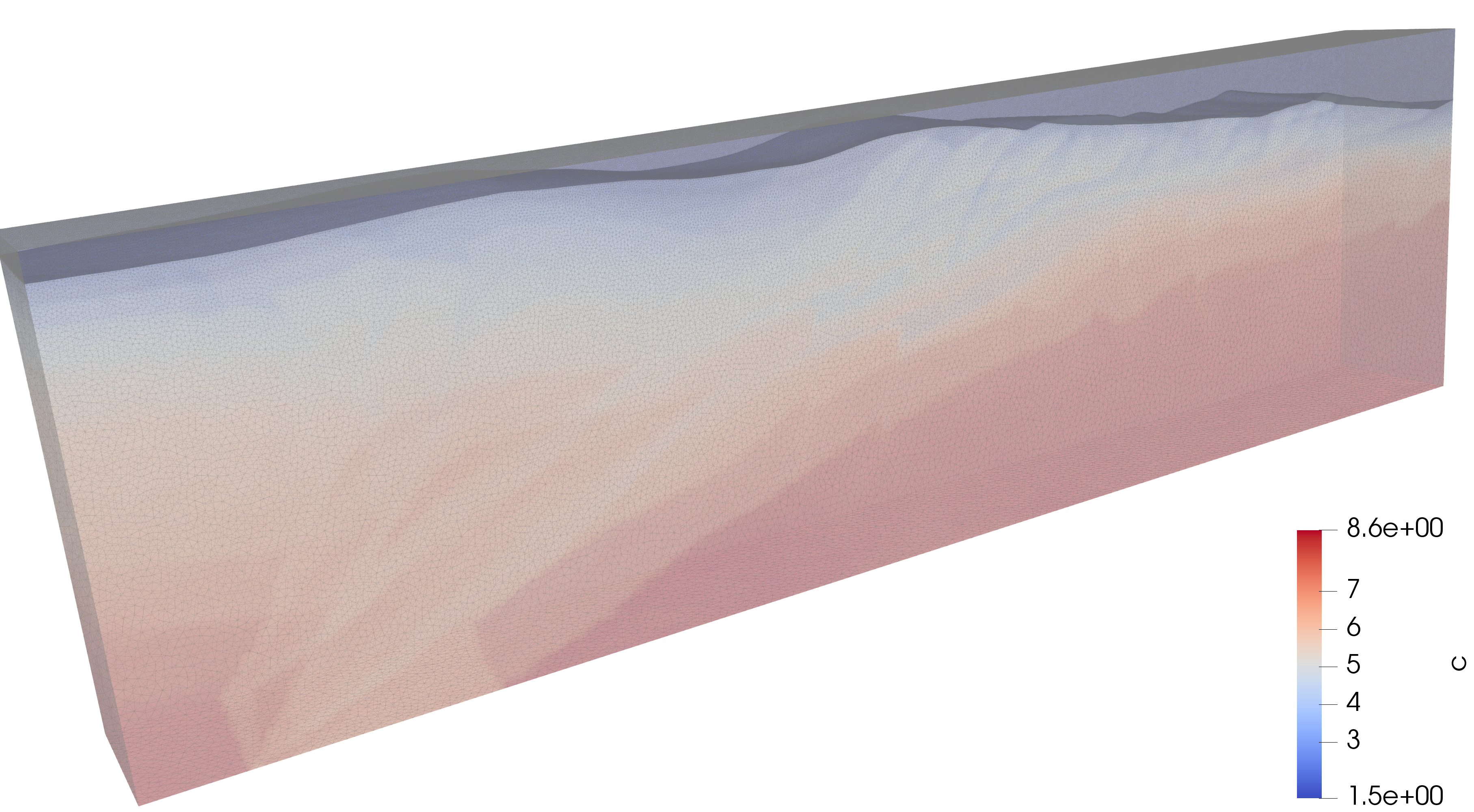}
\caption{Velocity field of the GO\_3D\_OBS crustal model (left),
adapted mesh conforming to bathymetry (right).}
\end{figure}

%-----------------------------------------------------------------
\subsubsection*{Weak scalability: frequency sweep}

We next perform weak scaling experiments on \texttt{GO\_3D\_OBS}
using finite differences.
(At very high frequencies and extreme problem sizes,
finite differences are preferred over finite elements
for memory and assembly efficiency.)

\begin{table}[H]
\centering
\caption{Weak scalability: iteration count and timings with increasing frequency}
\begin{tabular}{|c|c|c|c|c|c|c|c|c|}
\hline
f (Hz) & \#dof ($10^6$) & \#cores & \#it & ovl & $T_f$(s) & $T_s$(s) & $T_{tot}$(s) & $E_w$ \\
\hline
2.5 & 21.4 & 60 & 26 & 3 & 52.5 & 24.8 & 77.3 & 1 \\
3.75 & 67.5 & 360 & 40 & 3 & 21.9 & 18.6 & 40.5 & 1.003 \\
5.0 & 153.5 & 875 & 61 & 3 & 20.9 & 26.0 & 46.9 & 0.811 \\
7.5 & 500.1 & 2450 & 98 & 4 & 26.7 & 60.8 & 87.5 & 0.506 \\
10.0 & 1160.6 & 9856 & 142 & 5 & 15.1 & 70.8 & 85.9 & 0.297 \\
\hline
\end{tabular}
\end{table}

\begin{figure}[H]
\centering
\begin{minipage}[c]{0.7\textwidth}
\includegraphics[width=0.8\linewidth]{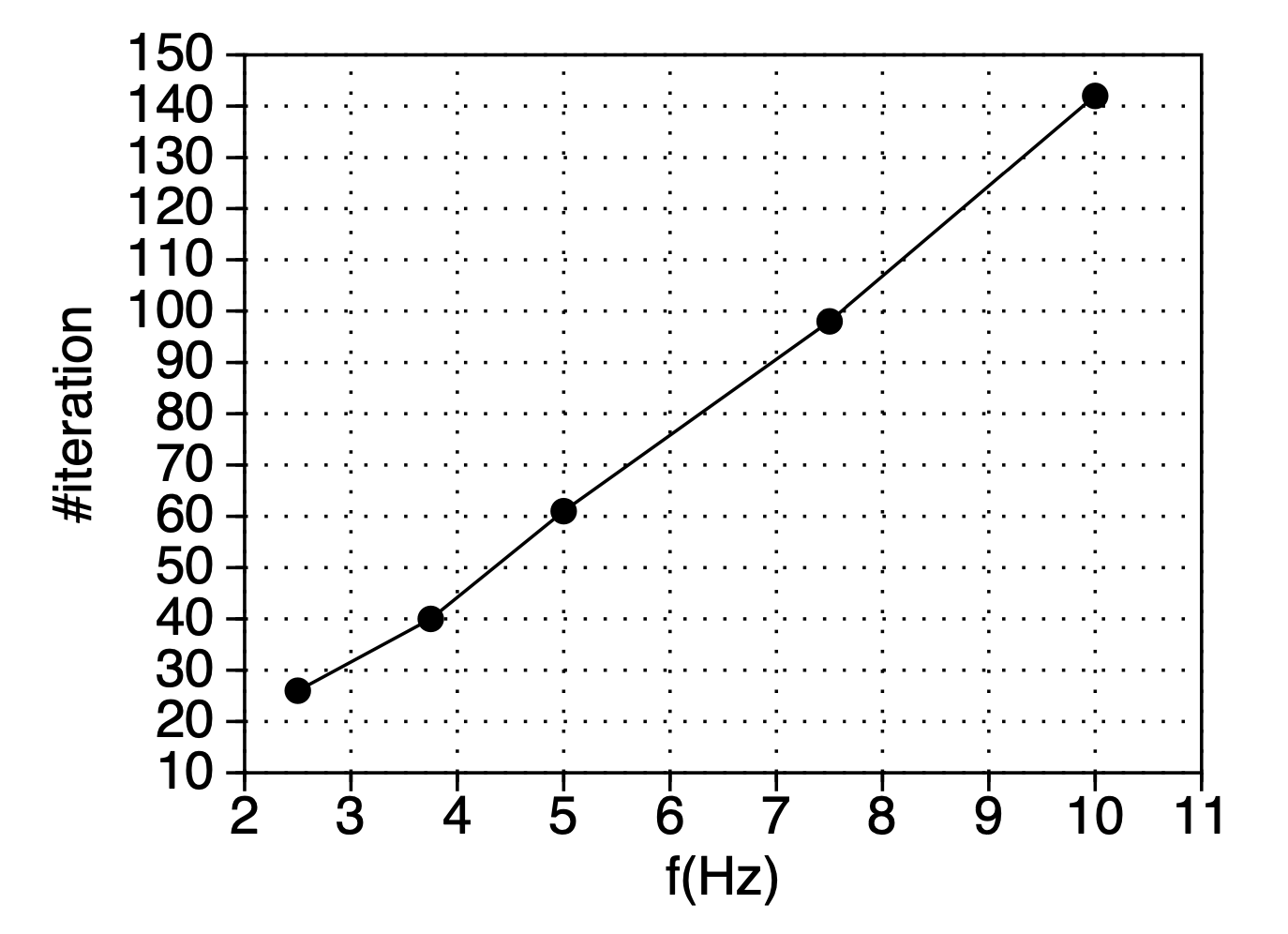}
\end{minipage}
\begin{minipage}[c]{0.28\textwidth}
\caption{Iteration count versus frequency (weak scaling).}
\end{minipage}
\end{figure}

The results show that iteration counts increase with frequency, as expected for indefinite wave problems, while total runtimes remain relatively stable at moderate frequencies. At the largest scales and highest frequencies, a controlled degradation of weak-scaling efficiency is observed, reflecting the combined impact of stronger indefiniteness and increased communication overhead, yet without compromising overall solver robustness.
%-----------------------------------------------------------------
\subsubsection*{Strong scalability: FD vs FE at 3.75 Hz}

At 3.75~Hz, we compare strong scalability
for finite difference (67.5M dofs)
and finite element (597.7M dofs) discretizations.

\begin{figure}[H]
\centering
\includegraphics[width=0.48\linewidth]{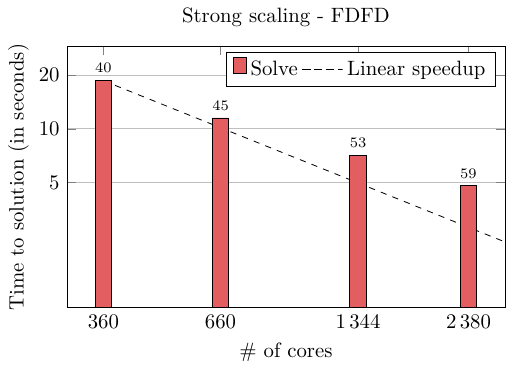}
\includegraphics[width=0.48\linewidth]{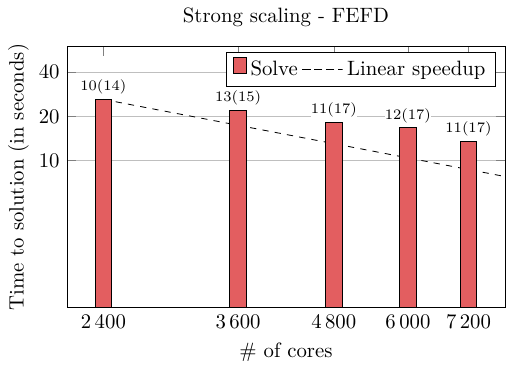}
\caption{Strong scaling at 3.75 Hz:
FD case (left) and FE case (right).}
\end{figure}

In both cases, the solver exhibits nearly linear speedup,
confirming good parallel efficiency
even at very large problem sizes.

%-----------------------------------------------------------------
\subsubsection*{Time-to-solution comparison: time vs frequency domain}

Finally, we compare:
\begin{itemize}
\item a classical time-domain FDTD solver,
\item a direct frequency-domain solver (MUMPS),
\item and an ORAS-preconditioned Krylov solver,
\end{itemize}
for 130 right-hand sides at 3.75~Hz.

\begin{table}[H]
\centering
\caption{Wall-clock hours for solving 130 RHSs at 3.75~Hz}
\begin{tabular}{|c|c|c|c|c|c|c|c|c|c|c|}
\hline
\multicolumn{3}{|c|}{FDTD} & \multicolumn{5}{|c|}{MUMPS} & \multicolumn{3}{|c|}{ORAS} \\
\hline
\#cores & $T_s^{(130)}$ (s) & $T_{hc}$ (hr) & \#c & $T_f$ & $T_s^1$ & $T_s^{130}$ & $T_{hc}$ & \#c & $T_s^{130}$ & $T_{hc}$ \\
\hline
12480 & 264 & 915 & 1920 & 1000 & 3.9 & 28.3 & 548 & 1344 & 256.8 & 96 \\
\hline
\end{tabular}
\end{table}

Despite the larger memory footprint,
the frequency-domain ORAS solver
achieves significantly lower hours-to-solution
than both FDTD and direct factorization.

\medskip
\noindent
\textbf{Overall message.}
These results demonstrate that:
\begin{itemize}
\item two-level preconditioners
remain effective in realistic high-frequency regimes;
\item scalability is maintained up to thousands of cores;
\item frequency-domain solvers equipped with robust Schwarz
preconditioners constitute competitive forward engines
for large-scale full-waveform inversion (FWI).
\end{itemize}

%=================================================================
The numerical solution of the Helmholtz equation at high frequencies remains one of the most challenging tasks in scientific computing due to the rapid growth in complexity with frequency. However, recent developments have enabled the design of efficient, accurate, and scalable forward solvers, particularly in the context of 3D frequency-domain full waveform inversion (FWI). Among these, finite-difference methods (FDFD) have demonstrated superior computational efficiency compared to finite elements (FEFD), provided the mesh structure and geometry allow it. The current framework supports real-world applications such as crustal-scale seismic imaging, where large-scale heterogeneous models must be handled. Two major research directions emerge: (i) further deployment of these solvers as engines for visco-acoustic FWI in 3D, and (ii) extensions to more complex visco-elastic formulations. From a theoretical standpoint, the behavior of some methods remains insufficiently understood, highlighting the need for deeper mathematical insight, possibly through microlocal analysis or operator theory. On the computational side, strategies to reduce the number of forward solves—such as surrogate modeling or learning-based approaches—could significantly improve the viability of inversion workflows.

\section{Using domain decomposition libraries}
\label{sec:implementation}

In this section we present a practical overview of the typical steps needed by a program to use a domain decomposition method in a distributed environment. In a distributed parallel setting, data is distributed among the computing units of the parallel machine, each unit owning a piece of the data. In the domain decomposition framework, a natural way is to assign one subdomain per computing core.

We provide below a condensed description of the interface and key parameters of two domain decomposition libraries: \textit{ffddm} and \textit{HPDDM}.
\begin{itemize}
\item \textit{ffddm} stands for \textit{FreeFEM Domain Decomposition Methods} and is a set of high-level FreeFEM scripts implementing Schwarz domain decomposition techniques in parallel. It aims to simplify the use of parallel solvers for FreeFEM users. Being entirely written in the FreeFEM language also makes it a very good tool for learning and prototyping domain decomposition methods without compromising efficiency.
FreeFEM is an open-source partial differential equation solver based on the finite element method, designed for numerical simulations in engineering and scientific research. It provides a high-level programming language that allows users to easily define variational formulations, mesh geometries, and solve complex multiphysics problems. In what follows, we will focus on a single toy problem, but there are many available examples in the FreeFEM source tree at \url{https://github.com/FreeFem/FreeFem-sources/tree/develop/examples/ffddm}.
\item \textit{HPDDM} (\textit{High-Performance Domain Decomposition Methods}) is a C++ library for solving large-scale sparse linear systems using domain decomposition and iterative solvers, optimized for parallel computing on high-performance architectures. It is available within PETSc (Portable, Extensible Toolkit for Scientific Computation)~\cite{petsc-user-ref}, which provides a flexible framework for managing matrices, vectors, and solvers, allowing \textit{HPDDM} to efficiently handle scalable linear algebra operations within PETSc’s ecosystem. The PETSc interface to \textit{HPDDM} is called \textit{PCHPDDM}~\cite{Jolivet2021KSPHPDDM}. There are also many available examples in the FreeFEM source tree at \url{https://github.com/FreeFem/FreeFem-sources/tree/develop/examples/hpddm}.
\end{itemize}

Once again, in the context of distributed parallel computing, these libraries distribute data and computations across multiple processors and nodes. As memory is not shared between processes, communication is essential to exchange data, assemble global operators, and synchronize iterative solvers. To ensure this communication is performed efficiently and in a portable way across different computing architectures, they rely on the Message Passing Interface (MPI), a standardized protocol that enables multiple processes to communicate and coordinate their work across distributed systems, making it possible to tackle very large simulations that would otherwise be infeasible on a single processor. In order to illustrate the different steps, we go through a typical FreeFEM program solving an elliptic PDE problem using a GenEO domain decomposition method. Below is the full script solving a Poisson problem on the unit square $\Omega$:

\begin{equation*}
-\Delta u = 1 \quad \text{in } \Omega, \qquad u = 0 \quad \text{on } \partial\Omega.
\label{eq:poisson_ff}
\end{equation*}

\begin{ddmlisting}
macro dimension 2// End of macro
include "ffddm.idp"
mesh Th = square(100,100); // global mesh
// Step 1: Decompose the mesh
ffddmbuildDmesh(M, Th, mpiCommWorld)
// Step 2: Define the distributed finite element space
ffddmbuildDfespace(FE, M, real, P2)
// Step 3: Define the problem to solve
macro grad(u) [dx(u), dy(u)] // End of macro
macro Varf(varfName, meshName, null)
    varf varfName(u,v) = int2d(meshName)(grad(u)'*grad(v))
                       + int2d(meshName)(1*v)
                       + on(1,2,3,4, u = 0); // End of macro
ffddmsetupOperator(PB, FE, Varf)
FEVhi ui, bi; // ui and bi are distributed FE functions
ffddmbuildrhs(PB, Varf, bi[])
// Step 4: Define the one level DD preconditioner
ffddmsetupPrecond(PB, Varf)
// Step 5: Define the two-level GenEO Coarse Space
ffddmgeneosetup(PB, Varf)
// Step 6: Solve the linear system with GMRES
FEVhi x0i = 0; // initial guess
ui[] = PBfGMRES(x0i[], bi[], 1e-6, 200, "right");
ffddmplot(FE, ui, "u") // plot the solution
PBwritesummary
\end{ddmlisting}

This parallel FreeFEM program \sh{Laplace.edp} can be run on a laptop, or a high-performance cluster. For example, the program can be launched using 8 cores with the command

\begin{ddmshell}[numbers=none]
ff-mpirun -np 8 Laplace.edp -wg
\end{ddmshell}

We describe the different steps below, emphasizing the range of options accessible to the user via command-line parameters for each step. For each option, we highlight the \textit{ffddm} command-line parameter as well as the corresponding \textit{PCHPDDM} parameter when applicable. The latter set of parameters is not readily usable with the described script, however, they could be used with examples using PETSc instead of \textit{ffddm}, or any other finite element software using PETSc as the linear algebra backend, e.g., code\_aster \url{https://www.code-aster.org/} or Gridap.jl~\cite{BadiaVerdugo2020Gridap}.

\add{Several algorithmic choices (mesh partitioner, overlap width, preconditioner type, coarse correction, GMRES tolerance, \ldots) can be set at runtime through \emph{command-line parameters} passed to the FreeFEM executable, without having to modify the script. A typical invocation combining several of them is}

\begin{ddmshell}[numbers=none]
ff-mpirun -np 8 Laplace.edp -wg            \
    -ffddm_partitioner 1                   \
    -ffddm_overlap 2                       \
    -ffddm_schwarz_method ras              \
    -ffddm_schwarz_coarse_correction ADEF1 \
    -ffddm_geneo_threshold 0.5
\end{ddmshell}

\add{which runs the script on $8$ MPI processes using METIS partitioning, two layers of overlap, RAS as the one-level preconditioner, an ADEF1 coarse correction, and a GenEO threshold $\tau=0.5$. The individual parameters used here are explained in detail in Steps~1--6 below.}

\subsection{Step 1: Decompose the mesh}

The first step is to define an overlapping decomposition of the global mesh \ff{Th} of the domain $\Omega$. This is done with the call to \ff{ffddmbuildDmesh}:
 
\begin{ddmlisting}[firstnumber=5]
ffddmbuildDmesh(M, Th, mpiCommWorld)
\end{ddmlisting}

The first argument of \ff{ffddmbuildDmesh} is an identifier, or \textit{prefix}, chosen by the user as a name for this mesh decomposition -- multiple decompositions can coexist in the same script. Here we chose \ff{M} as the identifier of the mesh decomposition. \ff{ffddmbuildDmesh} will create variables whose names are prefixed by this identifier ; here for example, the local mesh of the subdomain will be called \ff{MThi}.

The third argument is the \textit{MPI communicator} defining the group of processes over which the mesh is decomposed. Typically, all available MPI processes are used, in which case the communicator is \ff{mpiCommWorld}, as in this example.

The way the initial mesh \ff{Th} is partitioned depends on the value of the command-line parameter \cmd{-ffddm_partitioner}. Options are: \cmdv{0}: user-defined partition in the script, \cmdv{1}: use the automatic graph partitioner METIS (default), \cmdv{2}: use the automatic graph partitioner SCOTCH.

The command-line parameter \cmd{-ffddm_overlap} controls the amount of overlap.\\ \cmd{-ffddm_overlap 3} extends each subdomain by 3 layers of mesh elements in a symmetric way, resulting in an overlapping region with a width of 6 mesh elements between subdomains.

\begin{figure}[h!]
    \centering
    \includegraphics[width=0.4\textwidth]{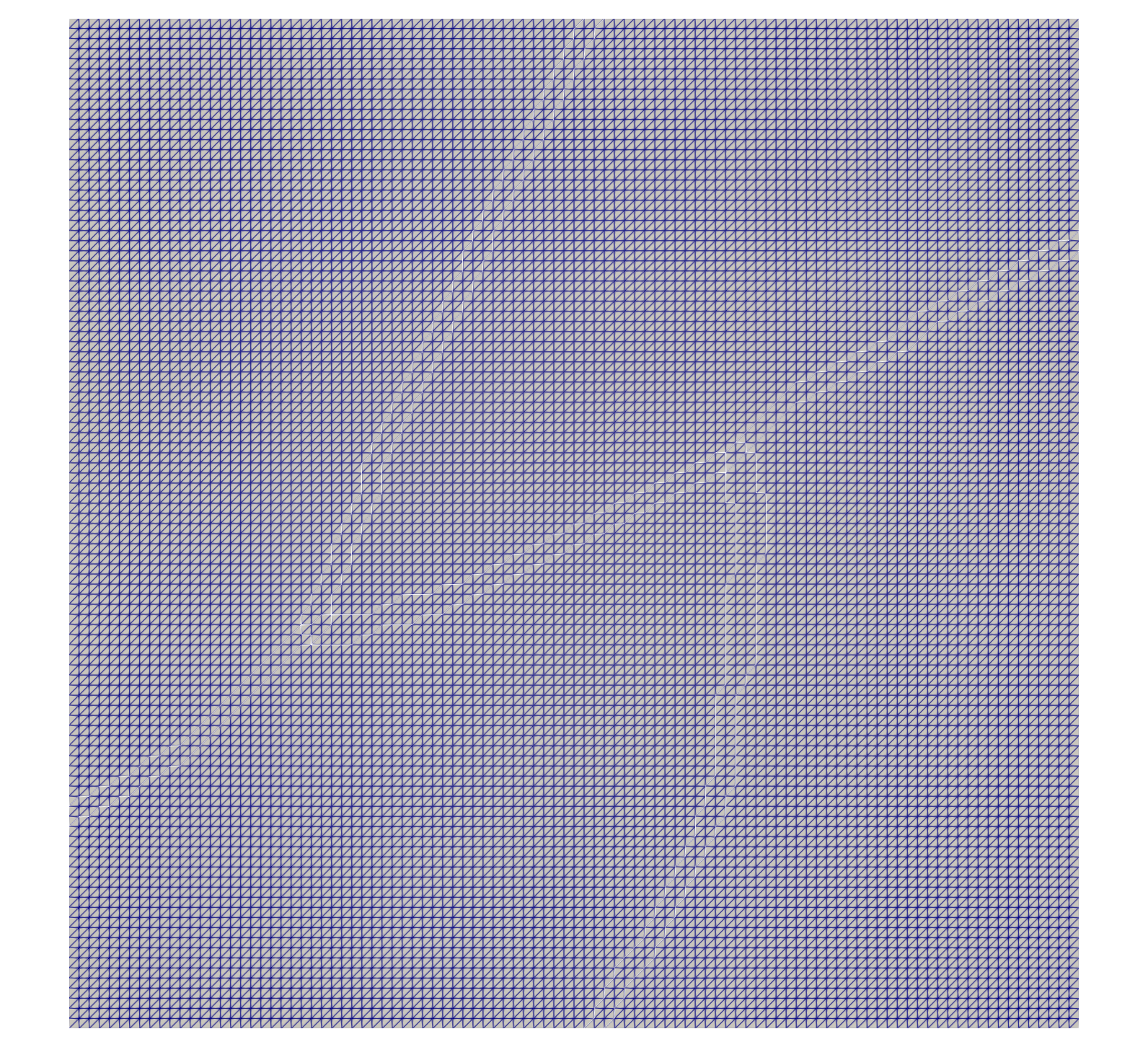}
    \includegraphics[width=0.4\textwidth]{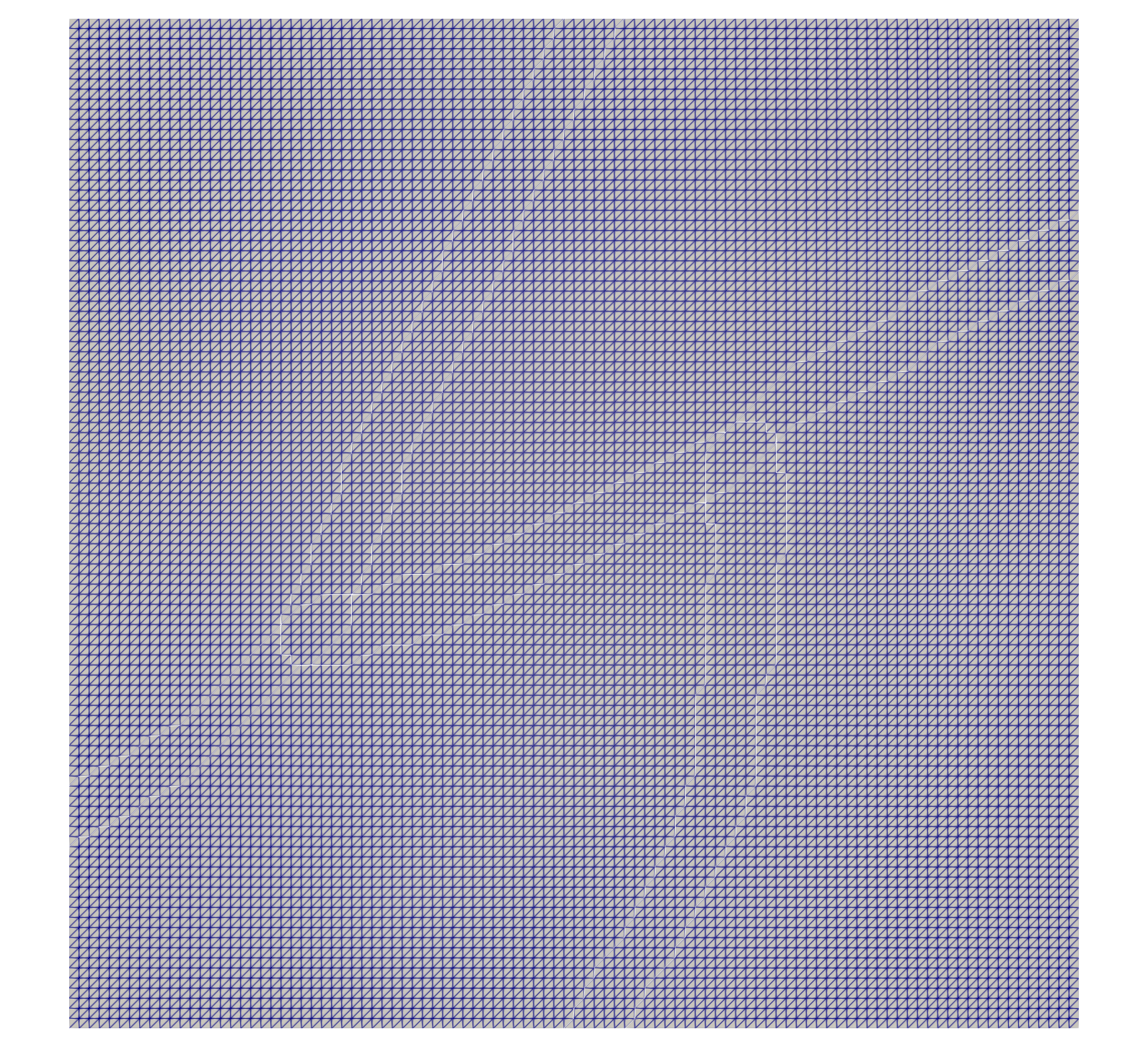}
    \caption{The overlapping mesh decomposition with \cmd{-ffddm_overlap 1} (left) and \cmd{-ffddm_overlap 3} (right).}
\end{figure}

\subsection{Step 2: Define the distributed finite element space}

\ff{ffddmbuildDfespace} builds the distributed finite element space on top of the distributed mesh decomposition. The first parameter is the prefix chosen by the user to identify this distributed finite element space. Here for example, we build the distributed space \ff{FE} on top of the mesh decomposition \ff{M} defined in the previous step.

\ff{ffddmbuildDfespace} constructs in particular the partition of unity matrices $\{D_i\}_{i=1}^N$. Here we build the distributed finite element space for \ff{P2} Lagrange finite elements with \ff{real} coefficients (use \ff{complex} for complex problems):

\begin{ddmlisting}[firstnumber=7]
ffddmbuildDfespace(FE, M, real, P2)
\end{ddmlisting}

\subsection{Step 3: Define the problem to solve}

\ff{ffddmsetupOperator} builds the distributed operator associated to the variational problem that we want to solve:

\begin{ddmlisting}[firstnumber=9]
macro grad(u) [dx(u), dy(u)] // End of macro
macro Varf(varfName, meshName, null)
    varf varfName(u,v) = int2d(meshName)(grad(u)'*grad(v))
                       + int2d(meshName)(1*v)
                       + on(1,2,3,4, u = 0); // End of macro
ffddmsetupOperator(PB, FE, Varf)
\end{ddmlisting}

The first parameter is the prefix chosen by the user to identify this distributed operator. Here for example, we build the distributed operator \ff{PB} on top of the distributed finite element space \ff{FE} defined in the previous step.

The definition of the problem is given by the \ff{Varf} input macro, which in this case corresponds to the variational form of the Poisson problem.

In practice, the global matrix $A$ is constructed in a parallel and distributed way, each process (or core) assembling only the restriction matrix $A_i := R_i A R_i^T$ of its subdomain. The local matrices $A_i$ will then be used to compute the action of the global operator $A$ on distributed vectors in the iterative solver.

In a similar manner, we build the right-hand side of our distributed problem \ff{PB} corresponding to the discretization of the linear form of the \ff{Varf}:

\begin{ddmlisting}[firstnumber=16]
ffddmbuildrhs(PB, Varf, bi[])
\end{ddmlisting}

Each process builds only the restriction \ff{bi} of the right-hand side to its subdomain.

\subsection{Step 4: Define the one level preconditioner}

We enter the setup phase of the domain decomposition preconditioner:

\begin{ddmlisting}[firstnumber=18]
ffddmsetupPrecond(PB, Varf)
\end{ddmlisting}
builds the one-level preconditioner for our distributed operator \ff{PB}. The local matrices $A_i$ are factorized by the direct solver \textit{MUMPS} in each subdomain, each core computing its factorization independently in parallel. These will be used to compute the action of the local inverse $A_i^{-1}$ at each application of the preconditioner.

The choice of the one-level preconditioner is given by the command-line parameter \cmd{-ffddm_schwarz_method}. Options are: \cmdv{asm} (Additive Schwarz), \cmdv{ras} (Restricted Additive Schwarz), \cmdv{oras} (Optimized Restricted Additive Schwarz), \cmdv{soras} (Symmetric Optimized Restricted Additive Schwarz) or \cmdv{none} (no preconditioner). \add{The default is \cmdv{ras} (Restricted Additive Schwarz).} For Optimized methods, local matrices $B_i$ correspond to the discretization of the provided variational form \ff{Varf} on subdomains $\Omega_i$. It may differ from the variational form of the original problem, for example by including Robin-type boundary conditions at the subdomain interface.

\textit{$\rightarrow$ with PCHPDDM:} First, we need to tell PETSc to use \textit{HPDDM} as a preconditioner with \cmdp{-pc_type hpddm}. Passing \cmdp{-pc_hpddm_has_neumann} lets \textit{HPDDM} know that FreeFEM is supplying the local Neumann matrices, while \cmdp{-pc_hpddm_define_subdomains} sets the domain decomposition to be the same as the one used for the Neumann matrices. The default preconditioner is RAS, but we can switch to ASM with \\ \cmdp{-pc_hpddm_levels_1_pc_asm_type basic}. 

We can tell \textit{HPDDM} to use an exact (direct) factorization for the local matrices $A_i$ in the one-level preconditioner with \cmdp{-pc_hpddm_levels_1_sub_pc_type lu}.

\subsection{Step 5: Define the two-level GenEO Coarse Space}

The GenEO coarse space is built by

\begin{ddmlisting}[firstnumber=20]
ffddmgeneosetup(PB, Varf)
\end{ddmlisting}
which solves the GenEO eigenvalue problem~\eqref{eq:geneo_local_eig} concurrently in each subdomain. The Neumann matrices $A^{\mathrm{Neu}}_i$ are built by discretizing \ff{Varf} on subdomain $\Omega_i$.

Eigenvectors with eigenvalues smaller than the user-prescribed threshold $\tau>0$ are automatically selected to enter the coarse space $Z = R_0^T$. $\tau$ can be prescribed on the command-line, for example with \cmd{-ffddm_geneo_threshold 0.5}.

\textit{$\rightarrow$ with PCHPDDM:} \cmdp{-pc_hpddm_levels_1_eps_threshold_absolute 0.5}.

Finally, the coarse space operator $A_0 := R_0 A R_0^T$ is assembled and factorized.

We have seen that the coarse correction can be combined with the one-level preconditioner in an additive way (Equation~\eqref{eq:two_level_asm}). There are also several multiplicative variants, which may further reduce the number of iterations, but can come  with extra cost and are more difficult to analyze. The choice of the coarse correction formula can be specified with the command-line parameter \cmd{-ffddm_schwarz_coarse_correction} which can take the following values:

\begin{align*}
\nonumber
    &&\cmdv{AD}:&&\textit{Additive}, \quad &M^{-1} = \phantom{(I - Q A) }M^{-1}_1\phantom{ (I - A Q)} + Q\\
    &&\cmdv{BNN}:&&\textit{Balancing Neumann-Neumann}, \quad &M^{-1} = (I - Q A) M^{-1}_1 (I - A Q) + Q\\
    &&\cmdv{ADEF1}:&&\textit{Adapted Deflation Variant 1}, \quad &M^{-1} = \phantom{(I - Q A) }M^{-1}_1 (I - A Q) + Q\\
    &&\cmdv{ADEF2}:&&\textit{Adapted Deflation Variant 2}, \quad &M^{-1} = (I - Q A) M^{-1}_1\phantom{ (I - A Q)} + Q\\
    &&\cmdv{RBNN1}:&&\textit{Reduced Balancing Variant 1}, \quad &M^{-1} = (I - Q A) M^{-1}_1 (I - A Q)\\
    &&\cmdv{RBNN2}:&&\textit{Reduced Balancing Variant 2}, \quad &M^{-1} = (I - Q A) M^{-1}_1\phantom{ (I - A Q)}\\
    &&\cmdv{none}:&&\textit{no coarse correction}, \quad &M^{-1} = \phantom{(I - Q A) }M^{-1}_1\phantom{ (I - A Q)}\\
\end{align*}
where $M^{-1}$ is the one-level preconditioner and $Q := R_0^T A_0^{-1} R_0$ is the coarse space correction operator. \add{The default is \cmdv{ADEF1} (Adapted Deflation Variant~1), which in our experience offers a good compromise between robustness and cost.}

\textit{$\rightarrow$ with PCHPDDM:} \cmdp{-pc_hpddm_coarse_correction} allows to specify the coarse correction formula. Options are: \cmdpv{additive}, \cmdpv{balanced} (BNN), \cmdpv{deflated} (ADEF1) or \cmdpv{none}.

\subsection{Step 6: Solve the linear system}

We can now solve the linear system using GMRES, using the \ff{fGMRES} function for our operator \ff{PB}:

\begin{ddmlisting}[firstnumber=22]
FEVhi x0i = 0; // initial guess
ui[] = PBfGMRES(x0i[], bi[], 1e-6, 200, "right");
\end{ddmlisting}
The initial guess \ff{x0i} and the right-hand side \ff{bi} are given as distributed vectors. The solution of the linear system is given as the distributed output vector \ff{ui}. Here the GMRES tolerance is \ff{1e-6}, the maximum number of iterations is \ff{200} and GMRES is preconditioned to the \ff{"right"} (use \ff{"left"} for left-preconditioning). 

\add{Right preconditioning is the default, and is recommended in practice because it provides a reliable residual-based stopping criterion (see also the discussion on page~\pageref{eq:unified_schwarz_prec}).}

\textit{$\rightarrow$ with PCHPDDM:} PETSc command-line parameters related to the Krylov iterative solver are prefixed by \cmdp{-ksp_}. The choice of the Krylov method can be set with \cmdp{-ksp_type}. For example, \cmdp{-ksp_type cg} uses the conjugate gradient. \cmdpv{gmres} is the default Krylov solver. Preconditioning to the right (resp. left) is done with \cmdp{-ksp_pc_side right} (resp. \cmdpv{left}). 

\add{Right preconditioning is also the PCHPDDM default.} The relative convergence tolerance of the iterative solver can be set with \cmdp{-ksp_rtol}, while the maximum number of iterations is prescribed with \cmdp{-ksp_max_it}.\\

Finally, we can plot the global solution with \ff{ffddmplot} and make a call to \ff{PBwritesummary} to print an overview of the timings (in seconds) of the different steps of the method:

\begin{ddmlisting}[firstnumber=24]
ffddmplot(FE, ui, "u") // plot the solution
PBwritesummary
\end{ddmlisting}

\begin{figure}[h!]
    \centering
    \includegraphics[width=0.6\textwidth]{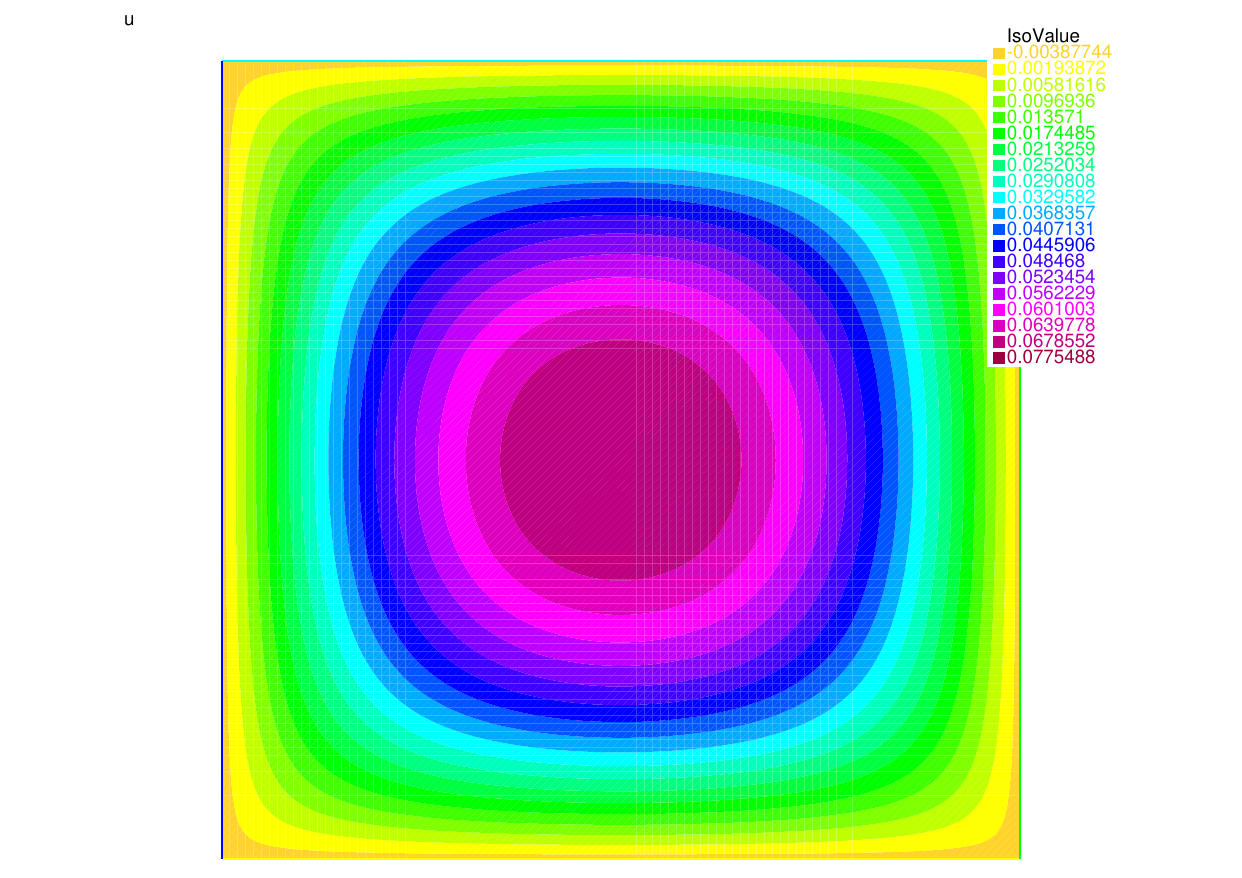}
    \caption{Plot the global solution with \ff{ffddmplot}.}
\end{figure}

The full terminal output of the script is transcribed below:

\begin{ddmoutput}[numbers=none]
[M] Building decomposition from mesh of 20000 elements
[FE] (paraddm) ndof = 40401
[M] timings - local eigenvalue problems : 0.123214 s
[PB] CS size = 114 x 114
[M] timings - building E : 0.028125 s
[PB] It: 1 Residual = 0.0055835 Rel res = 0.970331
[PB] It: 2 Residual = 0.00474896 Rel res = 0.825299
[PB] It: 3 Residual = 0.00290297 Rel res = 0.504493
[PB] It: 4 Residual = 0.00115567 Rel res = 0.200838
[PB] It: 5 Residual = 0.000468384 Rel res = 0.0813982
[PB] It: 6 Residual = 0.000182047 Rel res = 0.031637
[PB] It: 7 Residual = 6.93268e-05 Rel res = 0.012048
[PB] It: 8 Residual = 2.98209e-05 Rel res = 0.00518244
[PB] It: 9 Residual = 9.89744e-06 Rel res = 0.00172003
[PB] It: 10 Residual = 3.4596e-06 Rel res = 0.000601228
[PB] It: 11 Residual = 1.15588e-06 Rel res = 0.000200876
[PB] It: 12 Residual = 4.20447e-07 Rel res = 7.30675e-05
[PB] It: 13 Residual = 1.44859e-07 Rel res = 2.51743e-05
[PB] It: 14 Residual = 4.72674e-08 Rel res = 8.21438e-06
[PB] It: 15 Residual = 1.84267e-08 Rel res = 3.20228e-06
[PB] It: 16 Residual = 7.75534e-09 Rel res = 1.34776e-06
[PB] It: 17 Residual = 3.0426e-09 Rel res = 5.2876e-07
[PB] GMRES has converged in 17 iterations 
[PB] The relative residual is 5.2876e-07
[PB] timings - decomp and partition : 0.075952
[PB] timings - assembly and factorization local matrices : 0.088239
[PB] timings - assembly and factorization coarse problem : 0.028125
[PB] timings - GMRES : 0.117303
[PB] timings - MV : 0.019385
[PB] timings - PREC : 0.044314
[PB] timings - COARSE SOLVE : 0.00196
[PB] timings - total_sum : 0.432833
[PB] timings - total elapsed wall time : 0.386123
\end{ddmoutput}

\newpage
%%%%%%%%%%%%%%%%%%%%%%%% referenc.tex %%%%%%%%%%%%%%%%%%%%%
% sample references
% 
% Use this file as a template for your own input.
%
%%%%%%%%%%%%%%%%%%%%%%%% Springer Nature %%%%%%%%%%%%%%%%%%
%
% BibTeX users please use
% \bibliographystyle{}
% \bibliography{}
%
%\biblstarthook{
\newpage
% \section*{Styling of References}
% References may be \textit{cited} in the text either by number (preferred) or by author/year.\footnote{Make sure that all references from the list are cited in the text. Those not cited should be moved to a separate \textit{Further Reading} section or chapter.} If the citatiion in the text is numbered, the reference list should be arranged in ascending order. If the citation in the text is author/year, the reference list should be \textit{sorted} alphabetically and if there are several works by the same author, the following order should be used:
% \begin{enumerate}
% \item all works by the author alone, ordered chronologically by year of publication
% \item all works by the author with a coauthor, ordered alphabetically by coauthor
% \item all works by the author with several coauthors, ordered chronologically by year of publication.
% \end{enumerate}
% The \textit{styling} of references\footnote{Always use the standard abbreviation of a journal's name according to the ISSN \textit{List of Title Word Abbreviations}, see \url{https://www.issn.org/services/online-services/access-to-the-ltwa/}} depends on the subject of your book:
% \begin{itemize}
% \item The \textit{two} recommended styles for references in books on \textit{mathematical, physical, statistical and computer sciences} are depicted in ~\cite{science-contrib, science-online, science-mono, science-journal, science-DOI}.

% \end{itemize}
%}

\end{document}